\newcommand{\noproof}{\hspace{\stretch{1}} $\Box$}
\theoremstyle{plain}
\newtheorem{theorem}{Theorem}[section]
\newtheorem{proposition}[theorem]{Proposition}
\newtheorem{lemma}[theorem]{Lemma}
\theoremstyle{definition}
\newtheorem{definition}[theorem]{Definition}
\newtheorem{example}[theorem]{Example}
\newtheorem{remark}[theorem]{Remark}
\DeclareMathOperator{\St}{Sk}
\DeclareMathOperator{\diam}{diam}
\DeclareMathOperator{\id}{id}
\newcommand{\reals}{\mathbb R}
\newcommand{\R}{\mathbb R}
\newcommand{\naturals}{\mathbb{N}}
\newcommand{\abs}[1]{\left|#1\right|}
\newcommand{\boundary}{\partial}
\newcommand{\coarse}{\mathrm{coarse}}
\newcounter{commentcounter}
\newcommand{\showcomments}{yes}
\newsavebox{\commentbox}
\newenvironment{com}%
\begin{document}

\title{Coarse Homotopy Groups}

\author{Paul
  D. Mitchener\thanks{\protect\href{mailto:P.Mitchener@shef.ac.uk}{e-mail:P.Mitchener@shef.ac.uk}\protect\\
  \protect\href{http://www.mitchener.staff.shef.ac.uk/}{www:~http://www.mitchener.staff.shef.ac.uk/}}\\
School of Mathematics\\
The University of Sheffield\\ UK \and Behnam Norouzizadeh\\ Teradici\\
Vancouver\\ Canada \and Thomas Schick\thanks{
\protect\href{mailto:thomas.schick@math.uni-goettingen.de}{e-mail:
  thomas.schick@math.uni-goettingen.de}
\protect\\
\protect\href{http://www.uni-math.gwdg.de/schick}{www:~http://www.uni-math.gwdg.de/schick}}\\
Mathematisches Institut\\
Universit\"at G{\"o}ttingen\\ Germany}

\date{}
\maketitle

\begin{abstract}
  In this note on coarse geometry we revisit coarse homotopy. We prove that
  coarse homotopy indeed is an equivalence relation, and this in the most
  general context of abstract coarse structures. We introduce (in a geometric
  way) coarse homotopy groups. The main result is that the coarse homotopy
  groups of a cone over a compact simplicial complex coincide with the usual
  homotopy groups of the underlying compact simplicial complex.

  To prove this we develop geometric triangulation techniques for cones which
  we expect to be of relevance also in different contexts.
\end{abstract}

\maketitle

%\tableofcontents

\section{Introduction and Preliminaries}

Our main results are the definition and computation of coarse homotopy and
coarse 
homotopy 
groups, in the category of generalized coarse spaces, as introduced in
particular by John Roe \cite{Roe6}.

In this note, we discuss in detail the concept of coarse homotopy (and
coarse homotopy equivalence). In particular, we check carefully that this is an
equivalence 
relation, a result which seems not to be available in the literature. We use
the ``correct'' notion of coarse homotopy, differing from the original one
which has been shown to be inappropriate by being too flexible.

We then introduce a geometric version of coarse homotopy groups and show their
basic properties (in particular that they form groups in the first place). The
main computation is then the calculation of the coarse homotopy groups of
cones on simplicial complexes: they are equal to the homotopy groups of the
base of the cone. Preliminary results
in this direction are contained in the G\"ottingen doctoral thesis of Behnam
Norouzizadeh \cite{Norouzizadeh}.

Along the way, we discuss that there is a canonical coarse
structure on the (euclidean) cone of a simplicial complex. We also develop
precise geometric triangulation techniques for cones of simplicial complexes
which we expect to be of relevance in other contexts. 

\smallskip
Before we get to these results, we start with preliminaries, introducing the
coarse category and then deriving some gluing theorems for coarse maps, which
are indispensable when working with geometric homotopy groups. We also work
out some basics of triangulations and subdivisions which we will need.

\subsection{The coarse category}

Recall (compare e.g.~\cite{Roe6}) that a (unital) {\em coarse structure} on a set $X$ is a distinguished collection, $\mathcal E$, of subsets of the product $X\times X$ called {\em entourages} such that:

\begin{itemize}

\item Any finite union of entourages is an entourage.  Any subset of an entourage is an entourage.

\item The union of all entourages is the entire space $X\times X$.

\item The {\em inverse} of an entourage $M$:
\[ M^{-1} = \{ (y,x)\in X\times X \ |\ (x,y)\in M \} \]
is an entourage.

\item The {\em composition} of entourages $M_1$ and $M_2$:
\[ M_1 M_2 = \{ (x,z)\in X\times X \ |\ (x,y)\in M_1 \ (y,z)\in M_2 \textrm{ for some }y\in X \} \]
is an entourage.

\item The diagonal, $\Delta = \{ (x,x) \ |\ x\in X \}$ is an entourage.

\end{itemize}

A space $X$ equipped with a coarse structure is called a {\em coarse space}.

The above definition differs slightly from that of \cite{HPR}, but agrees with
the definition of a {\em unital coarse structure} on a set in \cite{Mitch4,
  STY}.  We call an entourage {\em symmetric} if it is equal to its inverse
and we write $S(M):=M\cup M^{-1}$ for the symmetric entourage generated by the
entourage $M$.

If $X$ is a coarse space, and $f,g\colon S\rightarrow X$ are maps into $X$, the maps $f$ and $g$ are termed {\em close} or {\em coarsely equivalent} if the set $\{ (f(s) , g(s)) \ |\ s\in S \}$
is an entourage.  We call a subset $B\subseteq X$ {\em bounded} if
the inclusion $B\hookrightarrow X$ is close to a constant map.

% Note that a subset $B\subseteq X$ is bounded if and only if it takes
% the form
% \[ M(x) = \{ y \ |\ (y,x)\in M \} \]
% for some entourage $M\subseteq X\times X$ and point $x\in X$.

The most important, and motivating, example of a coarse structure is the one
of a proper metric space.
\begin{example}
Let $X$ be a proper metric space (i.e.~the closures of sets of finite diameter
are compact). The {\em bounded coarse structure}
on $X$ is by definition the unital coarse structure formed by defining the
entourages to be subsets of {\em $R$-neighbourhoods of the diagonal}:
\[ D_R = \{ (x,y) \in X\times X \ |\ d(x,y)< R \} ;\qquad R\in\reals. \]

The bounded sets are simply those which are bounded with respect to the metric.
\end{example}

% \begin{example}
% Let $X$ be a Hausdorff space that is contained in a compact space $\overline{X}$ as a
% topologically dense subset.   Write $\partial X = \overline{X}\backslash X$.  Let $M\subseteq X\times X$ and let $\overline{M}$ be the closure of $M$ in the space $\overline{X}\times \overline{X}$.

% Then we can define a coarse structure on $X$ by saying that $M$ is an entourage if and only if
% \[ \overline{M} \cap (\overline{X}\times \partial X \cup \partial X
% \times \overline{X} ) \subseteq \Delta_{\partial X} .\]

% We call this coarse structure the {\em continuously controlled coarse structure} on $X$ with respect to the compactification $\overline{X}$.
% \end{example}

Let $X$ and $Y$ be coarse spaces.  Then a map $f\colon X\rightarrow Y$ is said to be {\em controlled} if for every entourage $M\subseteq X\times X$, the image$$f[M] = \{ (f(x),f(y))\ |\ (x,y)\in M \}$$
is an entourage.  A controlled map is called {\em coarse} if the inverse image of a bounded set is also bounded.

If $X$ and $Y$ are metric spaces equipped with their bounded coarse structures, a map $f\colon X\rightarrow Y$ is controlled if and only if for all $R>0$ there exists $S>0$ such that if $d(x,y)<R$ for $x,y\in X$, then $d(f(x),f(y))<S$ in the space $Y$.

% If $X$ and $Y$ are Hausdorff spaces with the continuously controlled coarse structures arising from compactifications $\overline{X}$ and $\overline{Y}$ respectively, then a map $f\colon X\rightarrow Y$ is coarse if and only if it extends to a continuous map $f\colon \partial X\rightarrow \partial Y$.

We can form the category of all coarse spaces and coarse maps.  We call this category the {\em coarse category}.  We call a coarse map $f\colon X\rightarrow Y$ a {\em coarse equivalence} if there is a coarse map $g\colon Y\rightarrow X$ such that the composites $g\circ f$ and $f\circ g$ are close to the identities $1_X$ and $1_Y$ respectively.

Coarse spaces $X$ and $Y$ are said to be {\em coarsely equivalent} if there is a coarse equivalence between them.  There is a similar notion of coarse equivalence between pairs of coarse spaces.

The following definition comes from \cite{HPR}.

\begin{definition}
Let $X$ be a Hausdorff space.  A coarse structure on $X$ is said to be {\em compatible with the topology} if every entourage is contained in an open entourage, and the closure of any bounded set is compact.
\end{definition}

Note that any coarse topological space is locally compact.  In such a space, the bounded sets are {\em precisely} those which are precompact.  It also follows from the definition that any precompact subset of the product of the space with itself is an entourage, and the closure of any entourage is an entourage.

\begin{example}
The bounded coarse structure on a proper metric space is compatible with the topology.  % So is the continuously controlled coarse structure of a Hausdorff space with respect to a compactification.
\end{example}

The purpose of this article is to develop some notions of homotopy theory in
the coarse category. These homotopies have to end eventually, but the end time
will be allowed to depend on the given point in the coarse space (and to go to
infinity as one goes to infinity). This will be measured by coarse maps
$p\colon X\rightarrow \R_+$, which we call ``basepoint projection'', and which
will be part of the structure for us.
% To do this we need to be able to form some notion of cylinder.  These cylinders, defined in the next section, are based on coarse maps $p\colon X\rightarrow \R_+$, where $X$ is a coarse space.

\begin{example} \label{Dranishnikov}
Let $X$ be a proper metric space.  Endow $\R_+$ with the bounded coarse structure coming from the metric. Choose a point $x_0
\in X$. Then we have a basepoint projection
$p_{x_0}\colon X\rightarrow \R_+$ defined by the formula
\[ p_{x_0} (x) = d(x,x_0) . \]

Observe that for any two points $x_0 , y_0 \in X$ the maps $p_{x_0}$ and $p_{y_0}$ are close.
\end{example}

When proving results about coarse homotopies, the following lemma summarises many of the relevant properties of the coarse space $\R_+$.  It is easy to check.

\begin{lemma} \label{GR}
Let $\R_+$ be the space $[0,\infty )$ equipped with the bounded coarse structure arising from the usual metric.  Then the following hold:

\begin{itemize}

\item Let $M,N\subseteq \R_+ \times \R_+$ be entourages.  Then the sets
\[ M+N = \{ (u+x, v+y)\ |\ (u,v)\in M, (x,y)\in N \} \]
and
\[ M-N = \{ (u-x, v-y)\ |\ (u,v)\in M, (x,y)\in N, u\geq x, v\geq y \} \]
are entourages.

\item Let $M\subseteq \R_+ \times \R_+$ be an entourage.  Then the set
\[ Z(M) = \{ (u,v)\in \R_+\times \R_+\ |\ x\leq u\leq y,\ x\leq v\leq y,\ (x,y)\in M \} \]
is an entourage.  Note that $Z(Z(M)) = Z(M)$.

\item Let $M\subseteq \R_+ \times \R_+$ be an entourage.  Then the set
\[ \{ (x+a,y+a)\ |\ a\in \R_+, (x,y)\in M \} \]
is an entourage.

\end{itemize}

\noproof
\end{lemma}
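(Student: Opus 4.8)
The plan is to reduce all three bullets to a single characterization of the entourages of $\R_+$. First I would record the observation that a subset $M\subseteq\R_+\times\R_+$ is an entourage for the bounded coarse structure if and only if the quantity $\diam(M):=\sup\{\,|u-v| : (u,v)\in M\,\}$ is finite: indeed $M\subseteq D_R$ precisely when $\diam(M)\le R$, and any entourage is by definition a subset of a finite union $D_{R_1}\cup\cdots\cup D_{R_n}\subseteq D_{\max_i R_i}$. With this in hand each bullet becomes an elementary estimate on $\diam$.

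For the first bullet, if $(u,v)\in M$ and $(x,y)\in N$ then $(u+x)-(v+y)=(u-v)+(x-y)$ and $(u-x)-(v-y)=(u-v)-(x-y)$, so the triangle inequality gives $\diam(M+N)\le\diam(M)+\diam(N)$ and $\diam(M-N)\le\diam(M)+\diam(N)$; the constraints $u\ge x$, $v\ge y$ built into the definition of $M-N$ guarantee that its elements genuinely lie in $\R_+\times\R_+$. Hence both are entourages. For the third bullet, the translation $(x,y)\mapsto(x+a,y+a)$ leaves the difference $x-y$ unchanged, so the set in question has exactly the same $\diam$ as $M$ and is therefore an entourage.

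For the second bullet, note that a pair $(x,y)\in M$ contributes to $Z(M)$ only when $x\le y$ (otherwise the conditions $x\le u\le y$ are vacuous), and in that case every resulting $(u,v)$ satisfies $u,v\in[x,y]$, so $|u-v|\le y-x\le\diam(M)$; thus $\diam(Z(M))\le\diam(M)$ and $Z(M)$ is an entourage. The identity $Z(Z(M))=Z(M)$ I would prove by two inclusions: writing $Z(M)=\bigcup\{\,[x,y]\times[x,y] : (x,y)\in M,\ x\le y\,\}$, any generating square $[u,v]\times[u,v]$ of $Z(Z(M))$ comes from a pair $(u,v)\in Z(M)$ with $u\le v$, which lies in some $[x,y]\times[x,y]$ with $(x,y)\in M$, whence $[u,v]\times[u,v]\subseteq[x,y]\times[x,y]\subseteq Z(M)$; conversely, if $(u,v)\in Z(M)$ then also $(\min\{u,v\},\max\{u,v\})\in Z(M)$, and both $u$ and $v$ lie in $[\min\{u,v\},\max\{u,v\}]$, so $(u,v)\in Z(Z(M))$.

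There is no substantial obstacle here; the lemma is genuinely elementary, as the authors indicate. The only points that require a moment's care are keeping the nonnegativity hypotheses in the definition of $M-N$ in view, and supplying both inclusions for the idempotency $Z(Z(M))=Z(M)$ rather than treating it as a formal identity.
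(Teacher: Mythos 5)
Your proof is correct, and since the paper leaves this lemma unproved (``easy to check''), you are simply supplying the intended elementary verification: the characterization that $M\subseteq\R_+\times\R_+$ is an entourage iff $\diam(M)=\sup\{|u-v| : (u,v)\in M\}$ is finite reduces each bullet to a one-line estimate, exactly as the authors implicitly have in mind. Your care with the nonnegativity constraints in $M-N$ and with both inclusions for $Z(Z(M))=Z(M)$ is sound; nothing is missing.
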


%In the terminology of \cite{Mitch7}, the space $\R_+$ is an example of a {\em generalised ray}.

\begin{proposition} \label{sumax}
Let $X$ be a coarse space, and let $f,g\colon X\rightarrow \R_+$ be coarse maps.  Then the sum of $f$ and $g$ and the maximum of $f$ and $g$ are coarse maps.
\end{proposition}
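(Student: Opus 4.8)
The plan is to verify the two defining properties of a coarse map—being controlled, and having the property that preimages of bounded sets are bounded—separately for the sum $f+g$ and the maximum $\max(f,g)$, reducing each to the corresponding property of $f$ and $g$ together with the structural facts about $\R_+$ collected in Lemma~\ref{GR}.

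First I would treat controlledness. Let $M\subseteq X\times X$ be an entourage. Since $f$ and $g$ are controlled, $f[M]$ and $g[M]$ are entourages of $\R_+$. For the sum, I observe that $(f+g)[M] = \{(f(x)+g(x),\, f(y)+g(y)) \mid (x,y)\in M\}$ is contained in the set $f[M] + g[M]$ in the notation of Lemma~\ref{GR} (taking $(u,v)=(f(x),f(y))\in f[M]$ and $(x',y')=(g(x),g(y))\in g[M]$), which is an entourage by the first bullet of that lemma; any subset of an entourage is an entourage. For the maximum, the cleanest route is to note that $\max(f,g)[M] \subseteq Z(f[M] \cup g[M])$: given $(x,y)\in M$, the pair $(\max(f(x),g(x)),\max(f(y),g(y)))$ lies in the ``filled square'' spanned by a pair in $f[M]\cup g[M]$ — indeed if, say, $f(x)\le g(x)$ and $f(y)\ge g(y)$, then with $(x',y')=(g(x),f(y))$ we have $\max(f(x),g(x))=g(x)=x'$ and $\max(f(y),g(y))=f(y)=y'$, so the pair itself is of the required form, and the remaining sign cases are handled by the same bookkeeping, always landing inside $Z(f[M]\cup g[M])$. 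By the second bullet of Lemma~\ref{GR} and the union axiom for entourages, $Z(f[M]\cup g[M])$ is an entourage, hence so is its subset $\max(f,g)[M]$.

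Next I would handle preimages of bounded sets. A bounded subset $B\subseteq\R_+$ is contained in some interval $[0,R]$ (since $B$ is close to a constant map, the set of pairs $\{(b,c_0)\mid b\in B\}$ is an entourage, forcing $B$ to have finite diameter). Then $(f+g)^{-1}(B) \subseteq (f+g)^{-1}([0,R]) \subseteq f^{-1}([0,R]) \cap g^{-1}([0,R])$, because $f(x)+g(x)\le R$ with $f,g\ge 0$ forces $f(x)\le R$ and $g(x)\le R$; since $[0,R]$ is bounded in $\R_+$ and $f$ is coarse, $f^{-1}([0,R])$ is bounded, and a subset of a bounded set is bounded. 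Similarly $\max(f,g)(x)\le R$ forces both $f(x)\le R$ and $g(x)\le R$, so $\max(f,g)^{-1}([0,R]) \subseteq f^{-1}([0,R])$, again bounded. This completes both verifications.

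I do not expect a serious obstacle here; the proposition is essentially bookkeeping. The one point that needs a little care is the maximum under controlledness, where one must check all sign cases to confirm the image lands in $Z(f[M]\cup g[M])$ rather than something larger — but the idempotence $Z(Z(M))=Z(M)$ and the union-of-entourages axiom make even a generous over-estimate harmless. A stylistic alternative worth mentioning is to prove the statement first for two maps and then note it extends to any finite collection by induction, though for the stated proposition the two-map case is all that is required.
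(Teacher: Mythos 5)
Your handling of the sum (via $f[M]+g[M]$ and Lemma~\ref{GR}) and of preimages of bounded sets is correct and essentially identical to the paper's argument. The gap is in the controlledness of the maximum: the claimed inclusion $\max(f,g)[M]\subseteq Z(f[M]\cup g[M])$ is false with the paper's definition of $Z$, because $Z(N)$ only produces the filled square of an \emph{increasing} pair $(x,y)\in N$ with $x\le y$; a pair with $x>y$ contributes nothing. Concretely, let $X=\{x,y\}$ carry the coarse structure in which every subset of $X\times X$ is an entourage (so every map $X\to\R_+$ is coarse), let $M=\{(x,y)\}$, and take $f(x)=0$, $f(y)=1$, $g(x)=10$, $g(y)=0$. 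Then $\max(f,g)[M]=\{(10,1)\}$, while $f[M]\cup g[M]=\{(0,1),(10,0)\}$ and $Z(f[M]\cup g[M])=[0,1]\times[0,1]$, which contains neither $(10,1)$ nor is it helped by the pair $(10,0)$. (The conclusion is of course still true --- $\{(10,1)\}$ is an entourage --- but your containment, which is the justification, fails.) Relatedly, your treatment of the mixed-sign case is circular: you present the max-pair as $(x',y')=(g(x),f(y))$ and declare it ``of the required form'', but $(g(x),f(y))$ is not an element of $f[M]\cup g[M]$, so it cannot serve as the generating pair of a filled square; the genuine argument must compare $g(x)$ with $f(y)$ and pick whichever of the $f$-pair or the $g$-pair spans an interval containing both coordinates.

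The repair is small and is exactly what the paper's displayed estimate builds in: symmetrize before applying $Z$. Either replace $M$ by $S(M)=M\cup M^{-1}$ at the outset (then $f[M]$ and $g[M]$ are symmetric and the filled-square picture becomes valid), or use a bound such as $\max(f,g)[M]\subseteq f[M]\cup g[M]\cup Z\bigl(S(f[M])\cup S(g[M])\bigr)$, proved by the case distinction just described: in the mixed case with $f(x)\le g(x)$ and $g(y)\le f(y)$, if $g(x)\le f(y)$ the pair $(g(x),f(y))$ lies in the square generated by the symmetrized $f$-pair, and otherwise in the square generated by the symmetrized $g$-pair. With that correction your proof goes through and coincides with the paper's.
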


\begin{proof}
Let $M\subseteq X\times X$ be an entourage.  The images $f[M]$
and $g[N]$ are entourages.  Observe that
\[ (f+g)[M] = \{ (f(x)+g(x),f(y)+g(y)) \ | \ (x,y)\in M \} \subseteq
f[M]+g[N] \] and
\begin{equation*}
  \begin{split}
    \max (f,g)[M]& = \{ (\max\{f(x),g(x)\},\max\{f(y),g(y)\}) \ | (x,y)\in M\}\\
    &\subseteq f[M]\cup g[M] \cup S(Z(S(f(M)))\cup Z(S(g(M)))).
  \end{split}
\end{equation*}

Hence by the above, the images $(f+g)[M]$ and
$\max (f,g)[M]$ are entourages.

Now, let $B\subseteq \R_+$ be bounded.  Then we can choose $a>0$ such that $B\subseteq [0,a]$.  Hence
\[ (f+g)^{-1}[B] \subseteq \{ x\in X \ | \ f(x)+ g(x)\leq a \}
\subseteq \{ x\in X \ | \ f(x)\leq a \} = f^{-1}[0,a] \]

We see that the inverse image $(f+g)^{-1}[B]$ is bounded.  A similar
argument tells us that the inverse image $\max (f,g)^{-1}[B]$ is
bounded.

So the maps $f+g$ and $\max (f,g)$ are both coarse, and we are done.
\end{proof}

\begin{definition}
Let $X$ and $Y$ be coarse spaces.  Then the set-theoretic product $X\times Y$ is equipped with the coarse structure defined by taking the entourages to be subsets of sets of the form $M\times N$, where $M\subseteq X\times X$ and $N\subseteq Y\times Y$ are entourages for the spaces $X$ and $Y$ respectively.
\end{definition}

The product $X\times Y$ is {\em not} a product in the category-theoretic sense.  The problem is that the projections $\pi_X\colon X\times Y\rightarrow X$ and $\pi_Y\colon X\times Y\rightarrow X$ are not in general coarse maps; the inverse images of bounded sets need not to be bounded.

% One way around at least some of our difficulties is to generalise
% slightly the definition of an asymptotic product from \cite{Dran}.
% Another approach, involving non-unital coarse spaces, can be found in
% \cite{Luu}.  However, the above (non-category-theoretic) definition suffices for our purposes.

\subsection{Pasting together maps}

Many of the constructions of maps we are going to make are carried out in a
piecewise manner, and we need criteria which make sure that a map which has
good properties on the pieces does have such good properties globally.

For the following, recall that for metric spaces $X$ and $Y$ we call a map
$f\colon X\rightarrow Y$ {\em Lipschitz} if there is a constant $C>0$ such
that $d(f(x),f(y))\leq Cd(x,y)$ for all $x,y\in X$.  Certainly, any Lipschitz
map is continuous.  We call $C$ the {\em Lipschitz constant} of $f$; a Lipschitz map with Lipschitz constant $C$ is called {\em $C$-Lipschitz}.
A {\em bilipschitz homeomorphism} is an invertible Lipschitz map with Lipschitz inverse.

We will need the following properties of Lipschitz maps which are well known
and easy to prove.

\begin{lemma}\label{lem:basic_lipschitz}
  A continuous and piecewise smooth map between smooth Riemannian manifolds
  with a 
  uniform bound on the norm of the differential is Lipschitz.

  A composition of Lipschitz maps is Lipschitz.
\noproof
\end{lemma}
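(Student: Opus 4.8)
The plan is to use the characterization of Riemannian distance as an infimum of lengths of piecewise smooth paths, combined with the chain rule. Let $f\colon M\to N$ be as in the statement, so that $\|df_x\|\le C$ at every point $x$ at which $f$ is smooth. The inequality $d(f(x),f(y))\le C\,d(x,y)$ is vacuous when $x,y$ lie in different components of $M$ (then $d(x,y)=\infty$), so we may assume $M$ is connected, hence path connected; then for $x,y\in M$ and every $\epsilon>0$ there is a piecewise smooth path $\gamma\colon[0,1]\to M$ from $x$ to $y$ with $\ell(\gamma)\le d(x,y)+\epsilon$.

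First I would bound the length of $f\circ\gamma$. Refine the partition $0=t_0<\dots<t_k=1$ underlying the piecewise smoothness of $\gamma$ so that it also subordinates the decomposition of $M$ into pieces on which $f$ is smooth; if necessary, perturb $\gamma$ rel endpoints by an arbitrarily small amount so that it meets the lower--dimensional strata of that decomposition in a finite set, changing $\ell(\gamma)$ by at most $\epsilon$. Then on each $[t_{i-1},t_i]$ the composite $f\circ\gamma$ is smooth, the chain rule gives $(f\circ\gamma)'(t)=df_{\gamma(t)}(\gamma'(t))$, and hence $\|(f\circ\gamma)'(t)\|\le C\,\|\gamma'(t)\|$. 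Integrating over $[t_{i-1},t_i]$ and summing over $i$ yields $\ell(f\circ\gamma)\le C\,\ell(\gamma)$. Since $f\circ\gamma$ is a piecewise smooth, hence rectifiable, path in $N$ from $f(x)$ to $f(y)$, we conclude $d(f(x),f(y))\le \ell(f\circ\gamma)\le C\,(d(x,y)+2\epsilon)$; letting $\epsilon\to 0$ proves the first claim, with Lipschitz constant $C$.

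For the second claim, if $f$ is $C_f$--Lipschitz and $g$ is $C_g$--Lipschitz, then for all $x,y$ in the domain of $g\circ f$ we have $d\bigl(g(f(x)),g(f(y))\bigr)\le C_g\,d(f(x),f(y))\le C_gC_f\,d(x,y)$, so $g\circ f$ is $C_gC_f$--Lipschitz; this part is immediate.

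I expect the only genuinely delicate point to be the interaction of $\gamma$ with the decomposition of $M$ into smooth pieces: one must arrange, by choosing or perturbing the near--minimizing path, that $f\circ\gamma$ is still piecewise smooth with only finitely many corners, so that its length is additive over the subintervals and the chain rule applies on each. Everything else is the standard argument that a uniform bound on the differential forces the map to be Lipschitz with the same constant.
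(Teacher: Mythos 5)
The paper itself gives no argument for this lemma (it is stated with the ``well known and easy to prove'' disclaimer and marked \noproof), so there is nothing to compare against line by line; judged on its own, your proof is correct and is the standard one: bound the length of $f\circ\gamma$ by $C\,\ell(\gamma)$ via the chain rule, then take the infimum over near-minimizing piecewise smooth paths, and the composition statement is the one-line estimate you give. The only point that deserves the care you already flag is the interaction of $\gamma$ with the decomposition of $M$: a finite refinement of the partition need not subordinate the pieces (a segment can cross an interface infinitely often), so your small perturbation making $\gamma$ meet the lower-dimensional strata in a finite set is what actually carries the argument, and it is legitimate provided ``piecewise smooth'' is read, as it is throughout this paper, as smooth on the closed pieces of a decomposition whose interfaces form a locally finite union of positive-codimension submanifolds (e.g.\ a triangulation, which is the situation in which the lemma is invoked, as in Lemma~\ref{lem:simplicial_is_lipschitz} and Proposition~\ref{prop:cone_lipschitz_appr}); with that reading, between consecutive intersection times $\gamma$ stays in one closed piece, $f\circ\gamma$ is smooth there with $\|(f\circ\gamma)'\|\le C\|\gamma'\|$, and the length estimate follows since the finitely many exceptional points do not affect the integral. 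If you want to avoid the perturbation altogether, an alternative is to note that the same chain-rule bound shows $f$ is $C$-Lipschitz along any piecewise smooth path contained in a single closed piece, hence locally $C$-Lipschitz, and that a locally $C$-Lipschitz map into a metric space is globally $C$-Lipschitz when the source distance is a length (path) metric, which the Riemannian distance is; either route is fine, and your constant $C$ is the expected one.
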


\begin{lemma}\label{lem:Lip_extend}
Let $X$ be a geodesic metric space with a decomposition $X=A\cup B$ for closed subsets $A$ and $B$. Let $Y$ be a metric space,  and let $f\colon X\to Y$ be
a map such that the restrictions $f|_A\colon
A\to Y$ and $f|_B\colon B\to Y$ are both $C$-Lipschitz. Then also
$f\colon X \to Y$ is $C$-Lipschitz.

More generally, if $X=\bigcup_{i\in I} A_i$ is a union of
closed subsets $A_i$, which is such that every compact subset is contained in a union of only finitely many of the $A_i$, and the restriction $f|_{A_i}\colon
A_i\to Y$ is $C$-Lipschitz for every $i\in I$, then the map $f\colon X\to Y$ is also $C$-Lipschitz.
\end{lemma}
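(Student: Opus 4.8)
The plan is to reduce everything to the two-piece case and then verify the Lipschitz inequality pointwise along geodesics. First I would prove the two-piece statement: given $X = A \cup B$ with $A, B$ closed, and $f|_A$, $f|_B$ both $C$-Lipschitz, fix $x, y \in X$ and a geodesic $\gamma\colon [0,\ell] \to X$ from $x$ to $y$ parametrized by arc length, so $\ell = d(x,y)$. The preimages $\gamma^{-1}(A)$ and $\gamma^{-1}(B)$ are closed subsets of $[0,\ell]$ whose union is all of $[0,\ell]$. The key observation is that the map $t \mapsto d_Y(f(\gamma(0)), f(\gamma(t)))$ restricted to $\gamma^{-1}(A)$ is $C$-Lipschitz in $t$ (since $d_X(\gamma(s),\gamma(t)) \le |s - t|$ and $f|_A$ is $C$-Lipschitz), and similarly on $\gamma^{-1}(B)$. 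So I want a one-dimensional gluing lemma: a function on $[0,\ell]$ that is $C$-Lipschitz on each of two closed subsets covering $[0,\ell]$ is $C$-Lipschitz on $[0,\ell]$. That one-dimensional fact follows by a standard chaining argument — given $s < t$, the set $\gamma^{-1}(A) \cup \gamma^{-1}(B)$ covers $[s,t]$, and one can find finitely many points $s = t_0 < t_1 < \dots < t_n = t$ such that each consecutive pair lies in a common piece (using compactness of $[s,t]$ and openness of the complements within $[s,t]$, or simply: the sets where the two closed pieces "switch" form a structure that lets you interpolate), giving $d_Y(f(\gamma(s)), f(\gamma(t))) \le \sum_i C|t_{i+1} - t_i| = C|t - s|$. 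Applying this with $s = 0$, $t = \ell$ gives $d_Y(f(x), f(y)) \le C\ell = C\, d_X(x,y)$.

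For the general statement, the hypothesis that every compact set meets only finitely many of the $A_i$ is exactly what lets me localize. Given $x, y \in X$ and a geodesic $\gamma\colon [0,\ell] \to X$ between them, the image $\gamma([0,\ell])$ is compact, hence contained in $A_{i_1} \cup \dots \cup A_{i_k}$ for some finite index set. Now I am in the finite-union situation, which reduces to the two-piece case by induction on $k$: set $A = A_{i_1}$ and $B = A_{i_2} \cup \dots \cup A_{i_k}$; these are closed, their union is $\gamma([0,\ell])$ (or a closed superset of it), $f|_A$ is $C$-Lipschitz by hypothesis, and $f|_B$ is $C$-Lipschitz by the inductive hypothesis applied to the space $B$ with its induced length metric — here one must be slightly careful, since $B$ need not be geodesic, so instead I would run the induction directly on the geodesic $\gamma$: partition $[0,\ell]$ by the closed sets $\gamma^{-1}(A_{i_j})$ and apply the one-dimensional chaining lemma to $k$ closed pieces covering $[0,\ell]$ at once. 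That keeps the argument entirely on the interval and avoids metric subtleties about the subsets $A_i$.

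The main obstacle I anticipate is the one-dimensional chaining lemma: showing that a function on a closed interval $[a,b]$ which is $C$-Lipschitz on each of finitely many closed subsets covering $[a,b]$ is globally $C$-Lipschitz. The naive triangle-inequality chaining gives $\sum C|t_{i+1}-t_i|$, and this telescopes to $C|b-a|$ only because the interpolation points are monotone along the interval — so the real content is producing a monotone chain $a = t_0 < \dots < t_n = b$ with each $[t_j, t_{j+1}]$ contained, at its endpoints, in a single one of the covering closed sets. One clean way: let $g(t) = d_Y(f(a), f(t))$; on each closed piece $K_m$, $g$ satisfies $|g(s) - g(t)| \le C|s-t|$; now observe that for any $t \in [a,b]$ the value $g(t) \le C(t - a)$ by considering $\sup\{s \le t : g(s) \le C(s-a)\}$ and using that this supremum lies in some $K_m$ together with $t$ eventually — a standard connectedness/supremum argument on $[a,b]$. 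I would carry that out carefully since it is the crux; everything else (arc-length parametrization, $1$-Lipschitz inclusion of a geodesic, reduction from compact image to finitely many pieces) is routine.
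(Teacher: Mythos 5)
Your proposal is correct, and at heart it is the same argument as the paper's: restrict to a geodesic between the two given points, pull the closed cover back to the parameter interval, and chain the triangle inequality at the points where the cover switches. The difference is in execution. For two sets the paper simply exhibits a single point $z\in A\cap B$ on the geodesic (the closed sets $\gamma^{-1}(A)$ and $\gamma^{-1}(B)$ cover the interval, each contains an endpoint, and they cannot be disjoint) and applies one triangle inequality; for the general case it only remarks that the geodesic has compact image, hence meets finitely many $A_i$, and that ``the same proof'' applies. You instead isolate a one--dimensional gluing lemma and prove the endpoint estimate $d_Y\bigl(f(\gamma(a)),f(\gamma(t))\bigr)\le C(t-a)$ by a supremum argument; this is precisely the point the paper leaves implicit, since with more than two pieces an arbitrary chain of intermediate points need not be monotone and need not telescope, and your formulation treats two and finitely many pieces uniformly. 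One small repair is needed when you carry out the crux: at $T=\sup S$ with $S=\{s\le t:\ g(s)\le C(s-a)\}$, the correct use of the finitely many closed pieces is a pigeonhole in both directions --- some piece contains $T$ together with a sequence of points of $S$ increasing to $T$, which puts $T$ itself in $S$ without even invoking continuity of $f\circ\gamma$, and, if $T<t$, some piece contains $T$ together with points just to the right of $T$, which then also satisfy the defining inequality and contradict maximality; your phrase ``lies in some $K_m$ together with $t$ eventually'' is not quite the right statement. With that fixed, your proof is complete and, at the multi--piece step, somewhat more careful than the paper's.
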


\begin{proof}
  Pick $x,y\in X$. If $x,y\in A$ or $x,y\in B$, then $d(f(x),f(y))\le C d(x,y)$ by the
Lipschitz condition on $f|_A$. and similar if $x,y\in B$. If $x\in A$
and $y\in B$ choose a geodesic $\gamma\colon [0,d(x,y)]\to X$ from $x$
to $y$. Then, there is a point $z\in A\cap B$ on that geodesic. We obtain
$$ d(f(x),f(y))\le d(f(x),f(y)) + d(f(y),f(y))\le C d(x,z)+Cd(z,y) =
Cd(x,y).$$

Here, the first inequality is the triangle inequality, the second the
Lipschitz property of $f|_A$ and $f|_B$ and the third the geodesic
property of $\gamma$.

The same proof gives the general statement for $X=\bigcup_{i\in I} A_i$, using
the fact that any geodesic has compact 
image and therefore will involve only finitely many of the $A_i$.
\end{proof}

\begin{proposition}\label{prop:coarse_glue}
  Let $X$ be a proper metric space, considered as coarse space. Assume
  $X=A\cup B$. Assume this decomposition is \emph{coarsely excisive}, i.e.~for
  each $R>0$ there is $S>0$ such that $U_R(A)\cap U_R(B)\subset U_S(A\cap B)$,
  where $U_R(Z):=\{x\in X\mid d(x,Z)\le R\}$ for $Z\subset X$ is the
  $R$-neighborhood of $Z$.

  Assume $f\colon X\to Y$ for a coarse space $Y$ satisfies that $f|_A\colon
  A\to Y$ and $f|_B\colon B\to Y$ are coarse. Then also $f\colon X\to Y$ is
  coarse. 
\end{proposition}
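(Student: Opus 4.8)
The plan is to verify the two conditions defining a coarse map for $f$: controlledness (images of entourages are entourages) and properness (preimages of bounded sets are bounded). Since $X$ is a proper metric space, entourages are (subsets of) $R$-neighbourhoods $D_R$ of the diagonal, and bounded sets are precompact, so everything can be phrased in terms of the metric. The properness condition is the easy part: if $B \subseteq Y$ is bounded, then $f^{-1}[B] = (f|_A)^{-1}[B] \cup (f|_B)^{-1}[B]$, and each of these is bounded in $A$ respectively $B$ (hence precompact, hence bounded in $X$) because $f|_A$ and $f|_B$ are coarse; a finite union of bounded sets is bounded.

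The main point is controlledness, and this is where coarse excisiveness enters. Fix $R > 0$; I want to show $f[D_R]$ is an entourage of $Y$. Given $(x,y) \in X \times X$ with $d(x,y) < R$, I cannot directly compare $f(x)$ and $f(y)$ when $x \in A$ and $y \in B$, since $f$ is only controlled on each piece separately. The idea is to interpolate through a point of $A \cap B$: since $x \in U_R(A)$ and $y \in U_R(B)$, and also $y \in U_R(A \cap \{y\}) \subseteq \dots$ — more carefully, any point $x$ with $d(x,y)<R$ and $y \in B$ lies in $U_R(B)$, and if additionally $x \in A$ then $x \in U_R(A) \cap U_R(B) \subseteq U_S(A \cap B)$ for the $S$ supplied by coarse excisiveness. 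Hence there is $w \in A \cap B$ with $d(x,w) \le S$; then $d(w,y) \le d(w,x) + d(x,y) < S + R$. Now $(x,w) \in D_{S+1}$ restricted to $A$ (both points in $A$), so $(f(x),f(w))$ lies in the entourage $(f|_A)[D_{S+1}\cap (A\times A)]$; and $(w,y) \in D_{S+R+1}$ restricted to $B$, so $(f(w),f(y))$ lies in $(f|_B)[D_{S+R+1}\cap(B\times B)]$. Composing these two entourages of $Y$ (and throwing in the two "pure" pieces $(f|_A)[D_R \cap (A\times A)]$ and $(f|_B)[D_R\cap(B\times B)]$ for the cases $x,y\in A$ or $x,y\in B$, plus their inverses to handle $x\in B, y\in A$) gives a single entourage of $Y$ containing $f[D_R]$. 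Since every entourage of $X$ is contained in some $D_R$, and subsets of entourages are entourages, $f$ is controlled.

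The step I expect to be the main obstacle is the bookkeeping around the interpolation: making sure the point $w \in A\cap B$ can always be found (this is exactly what coarse excisiveness guarantees, but one must check the neighbourhoods are set up so the hypothesis applies — in particular that $x$ really does lie in both $U_R(A)$ and $U_R(B)$), and then assembling the finitely many entourages of $Y$ (pure-$A$, pure-$B$, the composite through $A\cap B$, and inverses) into one, using that finite unions and compositions of entourages are entourages. There is also a minor subtlety that the metric on $A$ (resp.\ $B$) as a subspace agrees with the ambient metric, so that $D_R \cap (A\times A)$ is the $R$-entourage for $A$; this is immediate but should be noted so that "$f|_A$ is coarse" can be applied.
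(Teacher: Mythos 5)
Your proposal is correct and follows essentially the same route as the paper: split the $R$-entourage of $X$ into the pure pieces $A_R$, $B_R$ and the mixed piece, use coarse excisiveness to interpolate through a point of $A\cap B$, and handle properness by writing $f^{-1}$ of a bounded set as a union of two bounded sets. Your explicit composition of the two entourages $(f|_A)[D_{S+1}\cap(A\times A)]$ and $(f|_B)[D_{S+R+1}\cap(B\times B)]$ (plus inverses) in fact spells out a step the paper's proof leaves rather terse.
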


\begin{proof}
  Firstly, if $K\subset Y$ is bounded then $f^{-1}(Y)=(f|_A)^{-1}(K)\cup
  (f|_B)^{-1}(K)$ is the union of two bounded sets and therefore bounded.

  Secondly, given $R>0$ choose $S>0$ such
  that $U_R(A)\cap U_R(B)\subset U_S(A\cap B)$. We have to show that the
  $R$-entourage $\{(x,y)\in X\times X\mid d(x,y)\le R\}$ in $X\times X$ is
  mapped to an entourage of $Y$. 

Let $A_R= \{ (x,y)\in A\times A \ |\ d(x,y)\le R \}$ and $B_R = \{ (x,y)\in B\times B \ |\ d(x,y)\le R \}$. Then the $R$-entourage $\{(x,y)\in X\times X\mid d(x,y)\le R\}$ is the union of the sets $A_R$, $B_R$, and the set
  $C:=\{(x,y)\in A\times B\cup B\times A\mid d(x,y)\le R\}$. Hence,
  if $(x,y)\in C$, then $x,y\in U_R(A)\cap U_R(B)\subset U_S(A\cap B)$,
  i.e.~the set $C$ is contained in the $S$-entourage of $A\cap B$ and therefore also
  in the $S$-entourage of $A$.

  As the restrictions $f|_A$ and $f|_B$ are coarse maps, the above considerations imply that  the
  images of $A_R$, $B_R$, and $C$ are each entourages. Consequently, the map $f$ is
  coarse. 
\end{proof}

\subsection{Simplicial complexes}

The following lemma summarizes metric properties of simplicial maps
between geometric simplices, known by elementary geometry.
\begin{lemma}\label{lem:simplicial_is_lipschitz}
  Let $\sigma:=\langle v_0,\dots,v_n\rangle \subset \reals^N$ be a geometric
  $n$-simplex in $\reals^N$ spanned by $(n+1)$ vectors
  $v_0,\dots,v_n$ in general position. Let $w_0,\dots,w_n$ be vertices of a geometric $k$-simplex
  $\tau\subset\reals^M$.

  Then there is a unique affine linear map $f\colon \sigma\to \tau$ sending
  $v_i$ to $w_i$. The Lipschitz constant of $f$ is bounded above by
  $c(n,k,w){\max\{|w_i-w_j|\}}$
  where $c(n,k,w)$ depends on the dimensions $n$ and $k$ of the simplices and
  in addition on a lower bound $w$ on the width of $\sigma$ defined to be the
  shortest distance from any vertex of $\sigma$ to the opposite face.
\noproof
\end{lemma}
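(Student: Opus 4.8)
The plan is to exhibit the affine-linear map explicitly in barycentric coordinates and then bound its operator norm. Existence and uniqueness of $f$ are standard: any point $x\in\sigma$ has a unique representation $x=\sum_{i=0}^n t_i v_i$ with $t_i\geq 0$, $\sum_i t_i=1$ (this is exactly the statement that $v_0,\dots,v_n$ are in general position), so setting $f(x)=\sum_i t_i w_i$ is forced by affine-linearity and is well-defined. The content is the Lipschitz estimate, and for that I would first reduce to controlling how the barycentric coordinates $t_i=t_i(x)$ vary with $x$.

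First I would set up the linear algebra. Writing $u_i:=v_i-v_0$ for $i=1,\dots,n$, the vectors $u_1,\dots,u_n$ are linearly independent, and for $x\in\sigma$ with barycentric coordinates $(t_0,\dots,t_n)$ we have $x-v_0=\sum_{i=1}^n t_i u_i$. Thus the vector $t(x):=(t_1(x),\dots,t_n(x))$ depends on $x$ through the inverse of the (injective) linear map $U\colon\R^n\to\R^N$, $e_i\mapsto u_i$; concretely $t(x)=(U^\top U)^{-1}U^\top(x-v_0)$, and $t_0(x)=1-\sum_{i\geq1}t_i(x)$. Hence the map $x\mapsto(t_0(x),\dots,t_n(x))$ is Lipschitz with a constant $L(\sigma)$ depending only on $\sigma$ (in fact only on $n$ and on the smallest singular value of $U$, which is controlled from below by the width $w$). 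The key geometric input is that the width bound $w$ gives a lower bound on the distance of $v_i$ to the affine hull of the opposite face, which is precisely what bounds the relevant singular value / the norm of $(U^\top U)^{-1}U^\top$ away from infinity; this is where the dependence on $w$ (and $n$) enters, and I would isolate it as the one place an explicit elementary-geometry estimate is invoked.

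Given this, the final estimate is a short computation: for $x,y\in\sigma$,
\[
|f(x)-f(y)|=\Bigl|\sum_{i=0}^n\bigl(t_i(x)-t_i(y)\bigr)w_i\Bigr|
=\Bigl|\sum_{i=1}^n\bigl(t_i(x)-t_i(y)\bigr)(w_i-w_0)\Bigr|
\le \Bigl(\sum_{i=1}^n|t_i(x)-t_i(y)|\Bigr)\max_{i,j}|w_i-w_j|,
\]
using $\sum_i (t_i(x)-t_i(y))=0$ to replace $w_i$ by $w_i-w_0$. Combining with the Lipschitz bound on the barycentric coordinates, $\sum_{i=1}^n|t_i(x)-t_i(y)|\le c(n,w)\,|x-y|$, yields $|f(x)-f(y)|\le c(n,w)\,\max_{i,j}|w_i-w_j|\,|x-y|$, which is the claimed form (the dependence on $k$ is harmless — one can absorb it or simply note the bound does not worsen when the target dimension grows). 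The main obstacle is purely the second paragraph: making precise, via elementary geometry, that the width $w$ controls the barycentric-coordinate Lipschitz constant; everything else is bookkeeping. Since the lemma is stated with ``\noproof'', presumably only a brief indication along these lines is expected.
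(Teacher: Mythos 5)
The paper itself offers no proof of this lemma (it is stated with ``\noproof'' as elementary geometry), so there is no official argument to compare with; your route via barycentric coordinates, the telescoping trick using $\sum_i\bigl(t_i(x)-t_i(y)\bigr)=0$, and an operator-norm bound on $(U^\top U)^{-1}U^\top$ is the natural one and, in outline, correct. But the one step you yourself identify as the geometric heart contains a genuine gap as written: you assert that ``the width bound $w$ gives a lower bound on the distance of $v_i$ to the \emph{affine hull} of the opposite face.'' That implication goes the wrong way. The width in the lemma is the distance from $v_i$ to the opposite face \emph{as a set}, which is always $\ge$ the distance to its affine hull, so a lower bound on the former does not bound the latter from below. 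Moreover the needed comparison genuinely fails for $n\ge 3$: take the tetrahedron with vertices $v_0=(0,0,0)$, $v_1=(1,0,0)$, $v_2=(1,1,\epsilon)$, $v_3=(0,1,0)$. For small $\epsilon$ every vertex stays at distance about $1/\sqrt{2}$ from its opposite closed face, so the set-distance width is bounded below uniformly in $\epsilon$, yet $d\bigl(v_2,\mathrm{aff}(v_0,v_1,v_3)\bigr)=\epsilon$, the smallest singular value of your matrix $U$ is $O(\epsilon)$, and the affine map sending $v_0,v_2$ to one vertex and $v_1,v_3$ to another vertex of a $1$-simplex has Lipschitz constant of order $1/\epsilon$ while $\max|w_i-w_j|$ stays fixed. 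So with the set-distance reading your key estimate (and indeed the lemma's literal wording) breaks down.

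The repair is to read ``width'' as the height of the simplex, i.e.\ the distance from each vertex to the affine hull (supporting hyperplane) of the opposite facet --- the reading under which the lemma is true and which is what the paper's application (finitely many similarity types with edge lengths in a compact interval, Lemma \ref{lem:bounded_geom_of_cone_triang}) actually supplies. With that reading your argument closes cleanly: for a unit vector $c$ and $j$ the index with $|c_j|\ge 1/\sqrt{n}$ one has $\bigl|\sum_i c_i(v_i-v_0)\bigr| = |c_j|\,|v_j-p|$ for some $p$ in the affine hull of the face opposite $v_j$, hence $\sigma_{\min}(U)\ge \min_i h_i/\sqrt{n}$, the barycentric coordinates are Lipschitz with constant $\le n/\min_i h_i$, and your final display gives the stated bound with $c(n,k,w)=n/w$ (no real $k$-dependence, as you note). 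So: right strategy, but you must either prove the set-distance-to-height comparison (impossible in general, per the example) or, as intended, take the height as the definition of width; as written, the step is not a harmless elementary-geometry fact but the precise point where the argument can fail.
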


We need specific, geometric, triangulations of $c(X)$ for a finite simplicial
complex $X$ embedded simplicially into $\reals^n$. This can be achieved using
standard subdivisions, as introduced by Whitney \cite{Whitney} and used by
Dodziuk \cite[Section 2]{Dodziuk}.

\begin{definition}
  A simplicial complex $X$ is called \emph{locally ordered} if there is a partial
  ordering on its vertices which restricts to a total ordering on the vertices
  of each simplex of $X$.
\end{definition}
\begin{example}
  A total order on the vertices of a simplicial complex of course also is a
  partial order. A barycentric subdivision has a canonical local order.
\end{example}

\begin{definition}
  Let $\sigma:=\langle v_0,\dots,v_n\rangle \subset \reals^N$ be a simplex
  realized as convex hull of the $n+1$ affinely independent vertices
  $v_0,\dots,v_n\in\reals^N$. We define its standard subdivision $S(\sigma)$
  as the 
  simplicial complex with vertices $v_{ij}:=(v_i+v_j)/2$ for $0\le i\le j\le
  n$. 

    On this set of vertices we define a partial order setting $(i,j)\le (k,l)$
    if and only if $k\le i\le j\le l$. By definition, the simplices of the
    standard subdivision are spanned by increasing sequences of vertices,
    making $S(\sigma)$ locally ordered.

    Given a locally ordered simplicial complex $X$ define a standard
    subdivision $S(X)$ by applying the standard
    decomposition to each simplex to obtain a simplicial decomposition of the
    whole simplicial complex. This is well defined due to the compatibility of
    the local orders of the vertices of the different simplicies. Note that
    the vertices of the standard subdivision  
    inherit a partial order making it locally ordered which allows us to iterate the standard subdivision
    procedure. 
  \end{definition}

  \begin{definition}
   Two geometric simplices $\sigma,\tau\subset\reals^N$ are \emph{strongly 
    similar} if one can be obtained from the other by translation and
    multiplication by a positive constant.
  \end{definition}
  
  The great advantage of the standard subdivision is \cite[Lemma 2.5]{Dodziuk}:
  \begin{lemma}\label{lem:Dodziuk}
    Let $X$ be a finite simplicial complex embedded into $\reals^N$, with a
    local order. There are
    only finitely many strong similarity types of the simplices of iterated
    standard subdivisions of $X$.
  \end{lemma}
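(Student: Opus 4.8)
The plan is to reduce to a single simplex of $X$ and then, for each small simplex occurring in any iterated subdivision, to track the sequence of difference vectors of consecutive vertices in the natural order; the key point will be that these difference vectors are always, up to a single power of $\tfrac12$ and a choice of signs, a permutation of the original edge vectors of the big simplex, and strong similarity does not see the power of $\tfrac12$.

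First I would reduce to one simplex. Since $X$ is finite and the local order makes the standard subdivision procedure compatible on overlapping simplices, the simplices of $S^k(X)$ are precisely the simplices of the complexes $S^k(\sigma)$ as $\sigma$ ranges over the finitely many simplices of $X$, each $\sigma$ carrying the total order induced from the local order. As a finite union of finite sets is finite, it suffices to bound the number of strong similarity types occurring among the simplices of $\bigcup_{k\ge 0} S^k(\sigma)$ for a single ordered geometric simplex $\sigma=\langle v_0,\dots,v_n\rangle\subset\reals^N$. Write $a_i:=v_i-v_{i-1}$, $i=1,\dots,n$, for its edge vectors.

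The core claim, proved by induction on $k$, is: every top-dimensional simplex $\tau$ of $S^k(\sigma)$, listed in its chain order as $\tau=\langle w_0,\dots,w_n\rangle$, satisfies $w_m-w_{m-1}=\varepsilon_m\, 2^{-k} a_{\pi(m)}$ for all $m$, for some permutation $\pi$ of $\{1,\dots,n\}$ and signs $\varepsilon_m\in\{\pm1\}$ depending on $\tau$. The case $k=0$ is trivial ($\pi=\id$, $\varepsilon_m=1$). For the inductive step one unwinds the definition of the standard subdivision of an ordered simplex $\langle w_0,\dots,w_n\rangle$: its top simplices correspond to maximal chains in the poset of pairs $(i,j)$ with $0\le i\le j\le n$, and along such a chain each covering move is of one of two kinds — decreasing the first coordinate, which changes the current midpoint $(w_i+w_j)/2$ by $-\tfrac12(w_i-w_{i-1})$, or increasing the second coordinate, which changes it by $+\tfrac12(w_{j+1}-w_j)$. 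Hence the difference‑vector sequence of a top simplex of $S(\langle w_0,\dots,w_n\rangle)$ is a permutation of the list $\tfrac12(w_1-w_0,\dots,w_n-w_{n-1})$ in which each entry may additionally be negated; substituting the inductive hypothesis for the $w_m-w_{m-1}$ yields the claim for $k+1$. Consequently, as an ordered geometric simplex, each top $\tau$ of $S^k(\sigma)$ is determined up to translation and positive scaling by the pair $(\pi,\varepsilon)$ alone, the factor $2^{-k}$ being absorbed into the rescaling, so at most $2^n\, n!$ strong similarity types of top-dimensional simplices occur in $\bigcup_k S^k(\sigma)$.

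Finally, every simplex of every $S^k(X)$ is a face of a top-dimensional one, and a strong similarity of $\reals^N$ carrying a top simplex $\tau$ onto one of these finitely many model simplices restricts to a strong similarity on each face of $\tau$; as each model has only finitely many faces, only finitely many strong similarity types of lower-dimensional simplices occur as well. Summing the resulting finite bounds over the finitely many simplices $\sigma$ of $X$ finishes the proof. The only genuinely delicate point is the bookkeeping in the inductive step — pinning down exactly how the two types of covering moves act on the midpoint vectors, and hence that the edge-vector sequence is, at every stage, a signed permutation of $2^{-k}(a_1,\dots,a_n)$ — after which the argument is pure counting.
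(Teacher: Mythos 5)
Your proof is correct. The paper does not prove this lemma itself --- it quotes it as Lemma 2.5 of Dodziuk (going back to Whitney's standard subdivisions) --- and your induction showing that the successive vertex differences of any top simplex of $S^k(\sigma)$ form a signed permutation of $2^{-k}(a_1,\dots,a_n)$, followed by passing to faces, is essentially the classical argument behind that citation, so you have in effect supplied the omitted proof.
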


  We will also need several simplicial structures on $X\times [0,1]$ for a
  simplicial complex $X$.

  \begin{definition}\label{def:product_simpl}
    Recall that, for a locally ordered simplicial complex $X$ there is a
    canonical 
    triangulation of $X\times [0,1]$ with the obvious simplicies in $X\times
    \{0\}$ and $X\times \{1\}$ coming from the triangulation of $X$ and where in
    addition for any ordered simplex 
    $(v_0,\dots,v_k)$ of $X$ and $0\le j\le k$ we get a new simplex spanned by
    $(v_0,0),\dots (v_j,0),(v_j,1),\dots, (v_k,1)$.

    We now define a ``standard product subdivision''
    which restricts to the given triangulation on $X\times \{0\}$ but to the
    standard subdivision $S(X)\times \{1\}$ on the other end.
    The additional simplices here are the following:

    whenever $u_l\le u_{l-1}\le\dots u_0\le v_0<\dots <v_k\le w_0\le\dots w_l$
    are vertices of a simplex of $X$ such that
    $(u_0,w_0)<(u_1,w_1)<\dots (u_l,w_l)$ in the standard subdivision 
    we get a simplex of the ``standard product subdivision'' of  $X\times
    [0,1]$ spanned by $(v_0,0),\dots, (v_k,0),
    ((u_0,w_0),1), \dots, ((u_l,w_l),1)$. It is a little combinatorial
    exercise that these simplices are indeed precisely and in unique way
    unions of the simplices of the canonical triangulation of $S(X)\times
    [0,1]$ (which therefore further refines our standard product subdivision):
    the convex hull of $\langle(u_0,w_0), \dots, (u_l,w_l)\rangle \times\{1\}$
    and 
    $\langle v_0,\dots, v_k\rangle \times \{0\}$ as above is precisely the
    union of the convex hulls of 
    $\langle(u_0,w_0), \dots, (u_l,w_l)\rangle\times\{1\}$ and the simplices
    in the standard subdivision of $\langle v_0,\dots,v_k\rangle$ (times
    $\{0\})$, and this way we obtain precisely the simplices in the canonical
    triangulation of $S(X)\times [0,1]$.

    Therefore the described standard product subdivision indeed giving a
    triangulation of $X\times [0,1]$.
  \end{definition}

\section{Coarse Homotopy}

To develop the notion of homotopy for coarse spaces we first consider
cylinders. Our definition is inspired by \cite[Section 3]{Dran}.

\begin{definition}
Let $X$ be a coarse space, and let $p\colon X\rightarrow \R_+$ be a coarse map.  Then we define the {\em $p$-cylinder}
\[ I_p X = \{ (x,t)\in X\times \R_+ \ |\ t\leq p(x)+1 \}. \]
\end{definition}

We have inclusions $i_0\colon X\rightarrow I_p X$ and $i_1\colon
X\rightarrow I_p X$ defined by the formulas $i_0(x) = (x,0)$ and
$i(x) = (x,p(x)+1)$, respectively.  The canonical projection $q\colon I_p
X\rightarrow X$ defined by the formula $q(x,t)=x$ is a coarse map.  The identities
$q\circ i_0 =1_X$ and $q\circ i_1 = 1_X$ clearly hold.

\begin{definition}
Let $X$ and $Y$ be coarse spaces.  A {\em coarse homotopy}
is a coarse map $H\colon I_p X\rightarrow Y$ for some coarse map $p\colon X\rightarrow \R_+$.

We call coarse maps $f_0 \colon X\rightarrow Y$ and $f_1 \colon X\rightarrow Y$ {\em
coarsely homotopic} if there is a coarse homotopy $H\colon
I_p X\rightarrow Y$ such that $f_0 = H\circ i_0$ and $f_1 = H\circ
i_1$.

This map $H$ is termed a \emph{coarse homotopy} between the maps $f_0$ and
$f_1$.
\end{definition}

Let $f\colon X\rightarrow Y$ be a coarse map between coarse
spaces.  We call the map $f$ a {\em coarse homotopy
equivalence} if there is a coarse map $g\colon Y\rightarrow X$
such that the composites $g\circ f$ and $f\circ g$ are coarsely
homotopic to the identities $1_X$ and $1_Y$ respectively.

\begin{example} \label{close}
Let $X$ and $Y$ be coarse spaces.  Let $p\colon X\rightarrow \R_+$ be any coarse map.  Let $f_0\colon X\rightarrow
Y$ and $f_1\colon X\rightarrow Y$ be close coarse maps.  Then we can
define a coarse homotopy $H\colon I_pX\rightarrow Y$ between the
maps $f_0$ and $f_1$ by the formula
\[ H(x,t) = \left\{ \begin{array}{ll}
f_0 (x) & t< 1 \\
f_1 (x) & t\geq 1 \\
\end{array} \right. \]
\end{example}

% Let $i\colon A\hookrightarrow X$ be an inclusion of a subspace in a coarse
% space $X$.  Then the subspace $A$ is called a {\em coarse retract} of $X$ if there is a coarse map $r\colon X\rightarrow A$ such that $i\circ r =1_A$

\begin{theorem} \label{equivalence}
The notion of two coarse maps being coarsely homotopic is an equivalence relation.
\end{theorem}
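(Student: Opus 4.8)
The plan is to verify the three axioms of an equivalence relation directly, building coarse homotopies by hand each time. Reflexivity is immediate: given a coarse map $f\colon X\to Y$, pick any coarse map $p\colon X\to\R_+$ (for instance a constant, or the composite of $f$ with a basepoint projection on $Y$ if one is available, or simply the zero map, which is coarse) and set $H = f\circ q\colon I_pX\to Y$, where $q$ is the canonical projection. Since $q$ is coarse and $f$ is coarse, $H$ is coarse, and $H\circ i_0 = f = H\circ i_1$. For symmetry, suppose $H\colon I_pX\to Y$ is a coarse homotopy from $f_0$ to $f_1$. I would produce a coarse homotopy from $f_1$ to $f_0$ by ``reflecting the cylinder in the time direction'', i.e.\ defining $\bar H(x,t) = H(x, p(x)+1-t)$ on $I_pX$. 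The map $(x,t)\mapsto(x,p(x)+1-t)$ is a self-map of $I_pX$ (it swaps $i_0$ and $i_1$), and one checks it is controlled using Lemma~\ref{GR} (the time coordinate changes by a ``$M-N$''-type and translation-type operation applied to entourages of $\R_+$, together with the controlledness of $p$), so $\bar H$ is coarse with $\bar H\circ i_0 = f_1$, $\bar H\circ i_1 = f_0$.

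The substantive case is transitivity. Suppose $H\colon I_pX\to Y$ is a coarse homotopy from $f_0$ to $f_1$, and $K\colon I_qX\to Y$ is a coarse homotopy from $f_1$ to $f_2$. I would concatenate them on the cylinder $I_rX$ with $r = p + q + 1$ (or $r = p + q + 2$, chosen so that the total height exceeds $(p(x)+1)+(q(x)+1)$), defining
\[
L(x,t) = \begin{cases} H(x,t) & t \le p(x)+1,\\[2pt] K(x,\, t - (p(x)+1)) & p(x)+1 \le t \le p(x)+q(x)+2,\\[2pt] f_2(x) & t \ge p(x)+q(x)+2. \end{cases}
\]
This is well defined on the overlaps: at $t=p(x)+1$ both formulas give $f_1(x)$, and at $t = p(x)+q(x)+2$ the middle formula gives $K(x,q(x)+1)=f_2(x)$. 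It satisfies $L\circ i_0 = f_0$ and $L\circ i_1 = f_2$ since $r(x)+1 = p(x)+q(x)+2$. The work is to show $L$ is coarse. Boundedness of preimages of bounded sets follows because $L^{-1}(B)$ is contained in the union of $q_p^{-1}(f_0^{-1}(B)\text{-ish})$ — more precisely, projecting to $X$ shows $L^{-1}(B)$ lies over $(f_0^{-1}(B)\cup f_1^{-1}(B)\cup f_2^{-1}(B))$ only up to the structure, so the clean argument is that $L^{-1}(B)$ maps under the projection $I_rX\to X$ into a bounded set and the fibres are bounded in $\R_+$, hence $L^{-1}(B)$ is precompact; alternatively reduce to the boundedness statements for $H$ and $K$ directly.

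The main obstacle — and the place to spend care — is showing $L$ is \emph{controlled}. An entourage of $I_rX$ is contained in one of the form $M\times N$ with $M$ an entourage of $X$ and $N$ an entourage of $\R_+$; I need $L[M\times N]$ to be an entourage of $Y$. The difficulty is a pair $((x,s),(y,t))$ where $s\le p(x)+1$ but $t\ge p(x)+1$, so the two points are governed by \emph{different} pieces of the definition. The strategy is the standard one for gluing coarse maps: enlarge $N$ so that such a ``straddling'' pair can be connected through an intermediate pair lying on the seam $\{t=p(x)+1\}$, then estimate the two halves separately. Concretely, if $(x,s)$ lies in the $H$-region and $(y,t)$ in the $K$-region, compare both to the seam point $(y, p(y)+1)$ — lying in both regions — using that $|s-(p(x)+1)|$ and $|p(x)-p(y)|$ are controlled (the latter since $p$ is controlled and $(x,y)\in M$), so $((x,s),(y,p(y)+1))$ and $((y,p(y)+1),(y,t))$ each lie in controlled enlargements handled by $H$ and $K$ respectively; then $L[M\times N]$ is contained in a product/composition of $H[\text{enlargement}]$ and $K[\text{enlargement}]$, which is an entourage by the composition axiom. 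The bookkeeping with Lemma~\ref{GR} (the operations $M\pm N$, $Z(M)$, and translation of entourages) is what makes the seam enlargements legitimate entourages of $\R_+$; this is routine but must be written out.
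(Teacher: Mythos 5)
Your proposal is correct and follows essentially the same route as the paper: a constant homotopy for reflexivity, the flip $(x,t)\mapsto(x,p(x)+1-t)$ shown controlled via Lemma~\ref{GR} for symmetry, and concatenation on $I_{p+q+1}X$ together with the seam/gluing argument for straddling pairs (which the paper isolates as Lemma~\ref{split_cylinder}) for transitivity. One small correction: a constant (e.g.\ zero) map $X\to\R_+$ is \emph{not} coarse when $X$ is unbounded (preimages of bounded sets are all of $X$), so for reflexivity you should simply take $p$ to be an arbitrary coarse map $X\to\R_+$, exactly as in the paper's Example~\ref{close}.
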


Before proving this theorem we need a technical lemma.

\begin{lemma} \label{split_cylinder}
Let $q,p\colon X\rightarrow \R_+$ be coarse maps.  Let us write $I_{p+q}X = A\cup B$ where
\[ A =\{ (x,t)\in I_{p+1}X \ |\ t\leq p(x) \}; \qquad B =\{ (x,t)\in I_{p+q}X \ |\ t\geq p(x) \} . \]

Suppose that $f\colon I_{p+q}X\rightarrow Y$ is a map such that the
restrictions $f|_A$ and $f|_B$ are coarse maps.  Then the map $f$ is a coarse
map.
\end{lemma}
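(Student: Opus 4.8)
The plan is to verify the two defining properties of a coarse map—controlledness and properness on bounded sets—by exploiting the decomposition $I_{p+q}X = A\cup B$ together with the hypotheses on $f|_A$ and $f|_B$. The properness condition is the easy half: if $K\subseteq Y$ is bounded, then $f^{-1}(K) = (f|_A)^{-1}(K)\cup(f|_B)^{-1}(K)$ is a union of two bounded sets and hence bounded, since a finite union of bounded sets in a coarse space is bounded (the pair of the two inclusion-to-constant entourages is an entourage).

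For controlledness, let $M\subseteq I_{p+q}X \times I_{p+q}X$ be an entourage. Writing $M_A = M\cap(A\times A)$ and $M_B = M\cap(B\times B)$, these are entourages of $A$ and $B$ respectively, so $f[M_A] = f|_A[M_A]$ and $f[M_B] = f|_B[M_B]$ are entourages of $Y$. The issue is the ``mixed'' part $M_C = M\cap(A\times B\ \cup\ B\times A)$, where one point lies in $A$ and the other in $B$. Here I would use the idea that crossing from $A$ to $B$ forces both points to be near the common ``wall'' $A\cap B = \{(x,t)\in I_{p+q}X \mid t = p(x)\}$. Concretely, if $((x,s),(y,t))\in M_C$ with $(x,s)\in A$ and $(y,t)\in B$, then $s\le p(x)$ and $t\ge p(y)$; since $M$ is an entourage of the product $I_{p+q}X\subseteq X\times\R_+$, the pair $(x,y)$ lies in some fixed entourage $N\subseteq X\times X$ and $(s,t)$ in some fixed entourage $L\subseteq\R_+\times\R_+$. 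The coarseness of $p$ makes $p[N]$ an entourage, and then using Lemma \ref{GR} (in particular the closure under $Z$, sums and differences) one controls how far $s$ and $t$ can be from $p(x)$ and $p(y)$: the point $((x,s),(x,p(x)+\tfrac12))$—or a similarly chosen point on the wall over $x$—is connected to $((x,s),(y,t))$ by an entourage of $I_{p+q}X$ that lies in $A\times A$, and likewise on the $B$ side. This lets me factor $M_C$ through a bounded number of ``detours'' that stay within $A$ or within $B$, so that $f[M_C]$ is contained in a finite composition of the entourages $f[M_A']$, $f[M_B']$ together with $\Delta$; hence $f[M_C]$ is an entourage.

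The main obstacle is precisely this last paragraph: making rigorous the claim that a pair straddling the wall can be replaced, up to a controlled error measured entirely by entourages of $A$ and of $B$, by a short ``path'' through the wall $A\cap B$. The coarseness of $p$ is essential here—it is what guarantees that the vertical displacement needed to reach the wall from either endpoint is itself controlled, uniformly in the entourage $M$. Once that replacement is set up, one assembles $f[M] = f[M_A]\cup f[M_B]\cup f[M_C]$ as a finite union of entourages of $Y$, and concludes that $f$ is controlled; combined with the properness observation above, $f$ is coarse.
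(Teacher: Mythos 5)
Your strategy coincides with the paper's: the properness half is handled identically, and for controlledness you propose exactly the paper's idea of factoring a pair straddling the wall $A\cap B$ through an intermediate point on the wall, using the coarseness of $p$ to control the vertical displacement. However, the step you yourself label ``the main obstacle'' is precisely the entire content of the paper's proof, and your proposal only asserts it (``one controls how far $s$ and $t$ can be from $p(x)$ and $p(y)$\dots This lets me factor\dots'') without carrying it out. As it stands this is a genuine gap: nothing in your text identifies a specific entourage, depending only on $M$ and $p$, containing both legs of the detour. There is also a small but real slip in your choice of intermediate point: $(x,p(x)+\tfrac12)$ lies in $B$ but not in $A$ (membership in $A$ requires height $\le p(x)$), so the pair $((x,s),(x,p(x)+\tfrac12))$ again straddles the wall; the intermediate point must be taken exactly on the wall, e.g.\ $(y,p(y))$ or $(x,p(x))$.

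The gap is fillable, and the paper fills it as follows. Reduce to $M=(M_1\times M_2)\cap (I_{p+q}X\times I_{p+q}X)$ with $M_1\subseteq X\times X$, $M_2\subseteq\R_+\times\R_+$ symmetric entourages containing the diagonal and $M_2=Z(M_2)$ (Lemma \ref{GR}). For $((x,s),(y,t))\in M$ with $(x,s)\in A$ and $(y,t)\in B$, the inequalities $s\le p(x)$ and $p(y)\le t$ force either $s\le p(y)\le t$ or $p(y)\le s\le p(x)$; in the first case $(p(y),s),(p(y),t)\in Z(M_2)=M_2$, in the second $(p(y),s)\in Z(p[M_1])$, and since $(s,t)\in M_2$ one obtains in both cases $(p(y),s),(p(y),t)\in Z(p[M_1])M_2$. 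Setting $N:=M_1\times Z(p[M_1])M_2$, which depends only on $M$ and $p$, the single wall point $(y,p(y))\in A\cap B$ gives $((x,s),(y,p(y)))\in N\cap(A\times A)$ and $((y,p(y)),(y,t))\in N\cap(B\times B)$, whence $f[M\cap(A\times B)]\subseteq f[N\cap(A\times A)]\,f[N\cap(B\times B)]$, a composition of two entourages of $Y$; the case $M\cap(B\times A)$ is symmetric. This is the uniform control your sketch appeals to but does not supply; with it, your assembly of $f[M]$ as a finite union of entourages goes through.
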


\begin{proof}
It is clear that the inverse image under the map $f$ of a bounded set is
bounded, as the union of any two bounded sets is bounded.  Let $M\subseteq (X\times \R_+)\times (X\times \R_+)$ be an entourage.  We need to show that the image $f[M]$ is an entourage.

Since the restrictions $f|_A$ and $f|_B$ are coarse, we know that the sets $f[M\cap (A\times A)]$
and $f[M\cap (B\times B)]$ are entourages.  We need to prove that
the sets $f[M\cap (A\times B)]$ and $f[M\cap (B\times A)]$ are
entourages. We will check only the first case; the second case is
similar.

Without loss of generality, suppose that $M=M_1\times M_2\cap I_{p+q}X$ where
$M_1\subseteq X\times X$ and $M_2\subseteq \R_+ \times \R_+$ are symmetric
entourages containing the diagonal, and with  $M_2 = Z(M_2)$. {We are here indulging in some mild abuse of notation involving the order of various factors in products.}  Consider points $(x,s)\in A$ and $(y,t)\in B$ such that $((x,s),(y,t))\in M$.

The inequalities $s\leq p(x)$ and $p(y)\le t$ hold.  So either $s\leq p(y)\le t$ or $p(y)\le
s\le p(x)$. The former yields that $(p(y),t) ,\ (p(y),s) \in Z(M_2)=M_2$; the latter that $(p(y),s)\in Z(p[M_1])$. Since $(s,t)\in M_2$, in either case we have that $(p(y),t) ,(p(y),s)\in Z(p[M_1])M_2$.  So if we let $N$ be the entourage
$M_1\times Z(p[M_1])M_2$ (which depends only on the entourage $M$
and the coarse map $p$), then $((x,s),(y,p(y)))\in N\cap (A\times A)$
and $((y,p(y)),(y,t))\in N\cap (B\times B)$.

Therefore
\[ (f(x,s),f(y,t))\in f[N\cap (A\times A)]f[N\cap (B\times B)]  . \]
Hence the image $f[M\cap (A\times B)]$ is contained in the entourage $f[N\cap
(A\times A)]f[N\cap (B\times B)]$ and the map $f$ is coarse.
\end{proof}

\noindent
{\bf Proof of Theorem \ref{equivalence}:}
The relation is reflexive by Example \ref{close}.  Let $p\colon X\rightarrow \R_+$ be a coarse map, and  let $H\colon I_pX\rightarrow Y$ be a coarse homotopy.  Define a map
$\overline{H}\colon I_pX\rightarrow Y$ by the formula
\[ \overline{H}(x,t) = H(x,p(x)+1-t) . \]

We claim that the map $\overline{H}$ is a coarse homotopy, thus proving that the relation of coarse homotopy is symmetric.  To show this fact, it suffices to show that the \emph{flip map} $F\colon I_pX\rightarrow I_p X$ defined by the formula $F(x,t) = (x,p(x)+1-t)$ is coarse.

Let $M\subseteq X\times X$ and $N\subseteq \R_+ \times \R_+$ be entourages.  Observe that
\[ F(M\times N) \subseteq M\times (p(M)+1-N) \]
which is an entourage by Lemma \ref{GR} as $p$ is a coarse map.

Let $A\subseteq X$ and $B\subseteq \R_+$ be bounded sets.  Then
\[ F^{-1} [A\times B] \subseteq A\times (p(A)+1-B) \]
which is bounded since $p$ is coarse, and so takes bounded sets to bounded sets.  We conclude that the map $F$ and hence the map $\overline{H}$ are coarse.

We must now prove that the equivalence relation is transitive.  Let $p,p'\colon X\rightarrow \R_+$ be coarse maps.  Then by Proposition \ref{sumax}, the sum $p+p'+1\colon X\rightarrow \R_+$ is also coarse.

Consider coarse homotopies $H\colon I_pX\rightarrow Y$ and $H'\colon I_{p'}X\rightarrow Y$ such that
$H(x,p(x)+1) = H'(x,0)$ for all $x\in X$.  Define a map
$H+H' \colon I_{p+p'+1}X\rightarrow Y$ by the formula
$$(H+H')(x,t) = \left\{ \begin{array}{ll}
H(x,t); & 0\leq t\leq p(x)+1 \\
H'(x,t-(p(x)+1)); & p(x)+1\leq t\leq p(x)+p'(x)+2 \\
\end{array} \right.$$

Then the map $H+H'$ is a coarse map by Lemma \ref{split_cylinder}.  Transitivity now follows.
\noproof

\mbox{}

The above notion of coarse homotopy is not quite the one used in older
literature for the coarse category.   However, as mentioned in \cite{Bart},
the conventional definition is not quite adequate for the purposes of coarse
homology. Our definition is the appropriate remedy.

The definition of coarse homotopy contains the choice of the basepoint
projection map $p\colon X\to\reals_+$. It might seem that we lose too much
control here. However, for most spaces we are interested in we can normalize this:
\begin{lemma}\label{lem:normalize_homotopy}
  Let $X$ be a path-metric space, considered as
  coarse space. For $x_0\in X$, let $p_0\colon X\to \reals_+; x\mapsto
  d(x,x_0)$ be the standard basepoint projection. Let $q\colon X\to \reals_+$
  be any coarse map. Then any coarse homotopy $H\colon I_qX\to Y$ between
  $f\colon X\to Y$ and $g\colon X\to Y$ gives rise to a coarse homotopy $\bar
  H\colon I_{p_0}X\to Y$ between $f$ and $g$.

  The statement generalizes in the obvious way to $X$ with finitely many path
  components. 
\end{lemma}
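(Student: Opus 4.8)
I would construct $\bar H$ by precomposing $H$ with an explicit coarse map $I_{p_0}X\to I_qX$ that is the identity on the bottom section $X\times\{0\}$ and carries the top section $\{(x,p_0(x)+1)\}$ to the top section $\{(x,q(x)+1)\}$; then $\bar H(x,0)=H(x,0)=f(x)$ and $\bar H(x,p_0(x)+1)=H(x,q(x)+1)=g(x)$ automatically. The obvious candidate, fibrewise rescaling $(x,t)\mapsto\big(x,\tfrac{q(x)+1}{p_0(x)+1}\,t\big)$, fails because the rescaling factor need not be bounded, so this map need not be controlled. The whole proof is about repairing this: first replacing $q$ by something affinely comparable to $p_0$, for which the factor \emph{is} bounded.

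\textbf{Step 1 (the path-metric input).} First I would show that on a path-metric space every coarse map $q\colon X\to\R_+$ satisfies $q(x)\le C\,p_0(x)+C'$ for constants $C\ge 1$, $C'\ge 0$ depending only on $q$ and $x_0$. Since $q$ is controlled, fix $S>0$ with $|q(a)-q(b)|\le S$ whenever $d(a,b)\le 1$. Given $x$, choose a path from $x_0$ to $x$ of length at most $d(x,x_0)+1$, subdivide it into at most $d(x,x_0)+2$ sub-arcs of length $\le 1$, and add up the resulting bounds to obtain $|q(x)-q(x_0)|\le (d(x,x_0)+2)S$, which gives the claimed affine bound. (I would also record here that $p_0$ itself is a coarse map: it is $1$-Lipschitz, hence controlled, and $p_0^{-1}[0,a]$ has diameter $\le 2a$, hence is bounded.)

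\textbf{Steps 2--3 (enlarge, then rescale).} Put $\tilde q:=C p_0+C'$; this is coarse, with $\tilde q\ge q$ and $\tilde q\ge p_0$, so $I_qX\subset I_{\tilde q}X$. The fibrewise truncation $\rho\colon I_{\tilde q}X\to I_qX$, $\rho(x,t)=(x,\min\{t,q(x)+1\})$, is coarse: it is controlled since $|\min\{s,q(x)+1\}-\min\{t,q(y)+1\}|\le\max\{|s-t|,|q(x)-q(y)|\}$, and it has bounded preimages of bounded sets because the $t$-fibres of $I_{\tilde q}X$ over any bounded $K\subset X$ are uniformly bounded; moreover $\rho(x,0)=(x,0)$ and $\rho(x,\tilde q(x)+1)=(x,q(x)+1)$ because $\tilde q\ge q$. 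Next define $\Phi\colon I_{p_0}X\to I_{\tilde q}X$ by $\Phi(x,t)=(x,a(x)\,t)$ with $a(x):=\frac{\tilde q(x)+1}{p_0(x)+1}=C+\frac{C'+1-C}{p_0(x)+1}$. Since $p_0(x)+1\ge 1$ the factor $a(x)$ stays in a fixed compact subinterval of $(0,\infty)$, which handles both that $\Phi$ lands in $I_{\tilde q}X$ and that it has bounded preimages of bounded sets. For controlledness the load-bearing estimate is
\[ |a(x)-a(y)|=|C'+1-C|\,\frac{|p_0(y)-p_0(x)|}{(p_0(x)+1)(p_0(y)+1)}\le |C'+1-C|\,\frac{d(x,y)}{p_0(x)+1}, \]
so that for $(x,s),(y,t)\in I_{p_0}X$ with $d(x,y)$ and $|s-t|$ bounded one gets, using $s\le p_0(x)+1$ to kill the denominator, $|a(x)s-a(y)t|\le s\,|a(x)-a(y)|+a(y)\,|s-t|\le |C'+1-C|\,d(x,y)+a(y)\,|s-t|$, which is bounded; hence $\Phi$ is controlled, so coarse. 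By construction $\Phi(x,0)=(x,0)$ and $\Phi(x,p_0(x)+1)=(x,\tilde q(x)+1)$.

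\textbf{Step 4 (conclude).} Then $\bar H:=H\circ\rho\circ\Phi\colon I_{p_0}X\to Y$ is a composition of coarse maps, hence coarse, and the endpoint computations above give $\bar H\circ i_0=f$ and $\bar H\circ i_1=g$, so $\bar H$ is a coarse homotopy from $f$ to $g$. For $X$ with finitely many path components I would pick a basepoint in each, take $p_0$ to be the distance to the basepoint of the component of the given point, and apply the above to each component, since $I_qX$, $I_{p_0}X$ and $H$ all split as finite disjoint unions over the components. The one genuinely non-routine point is Step 1: recognizing that the path-metric hypothesis is exactly what forces any coarse $q$ to be dominated by an affine function of $p_0$. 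Everything after that is a matter of choosing the fibrewise reparametrization so that the ``$+1$''s built into the cylinder heights keep all relevant ratios bounded, with the inequality $s\le p_0(x)+1$ in Step 3 doing the real work.
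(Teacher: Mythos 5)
Your proof is correct and follows essentially the same route as the paper: the path-metric hypothesis makes $q$ large-scale Lipschitz, hence dominated by an affine function $Cp_0+C'$ of $p_0$, and one then passes to the enlarged cylinder and reparametrizes $I_{p_0}X$ onto it by a map with bounded rescaling factor. Your truncation $\rho$ is just the paper's ``constant extension'' $H'$ in disguise (indeed $H\circ\rho=H'$), and your fibrewise-linear $\Phi$ plays the role of the paper's piecewise-affine $\Psi$; you merely spell out the large-scale Lipschitz step and the controlledness estimates that the paper cites as well known or leaves implicit.
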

\begin{proof}
  As $X$ is a path metric space, it is well known that the coarse map $q$ is
  \emph{large scale Lipschitz}, i.e.~there is $L>0$ such that $|q(x)-q(y)|\le
  L d(x,y)+L$ for all $x,y\in X$. In particular, $|q(x)| \le |q(x)-q(x_0)| +
  |q(x_0)| \le L p_0(x) + C$ for $C=L+|q(x_0)|$ and for all $x\in X$. Set
  $q'(x):= C+L p_0(x)$. We just saw that $q\le q'$. We can extend the homotopy
  $H$ to $H'\colon I_{q'}X\to Y$ by extending ``constantly'' for the
  additional time, i.e.~$H'(x,t)=g(x,t)$ if $(x,t)\in I_{q'}X\setminus I_qX$.

  Finally, there is a canonical coarse equivalence $\Psi\colon I_{p_0}X\to
  I_{q'}X$, with
  \begin{equation*}
    \Psi(x,t):=
    \begin{cases}
      (x,(C+1)t); & 0\le t\le 1\\
      (x,C+1 + L(t-1)); & 1\le t \le p_0(x)+1
    \end{cases}
  \end{equation*}
  and we define $\bar H:= H'\circ \Psi$ which has all the desired properties.
\end{proof}

The following example can be found in several places in the literature, for
example following \cite[Lemma 9.9]{Roe1}.  We write out the argument again here in order to establish that everything is in order when we use our notion of coarse homotopy.

\begin{example} \label{hyperbolic}
Let $M$ be a complete simply-connected Riemannian manifold of non-positive sectional curvature.  The metric turns the manifold $M$ into a coarse space.  The exponential map $\exp \colon \reals^n \rightarrow M$ is a distance-increasing diffeomorphism.  The inverse $\log \colon M\rightarrow \reals^n$ is therefore a coarse map.

We claim that the map $\log$ is a coarse homotopy equivalence.  The problem is that the inverse map $\exp$ is not coarse; otherwise, the result would be trivial.

Let us call a map $s\colon \reals^n \rightarrow \reals^n$ a {\em radial
  shrinking} if it takes the form $s(r,\theta ) = (f(r),\theta )$ in polar
coordinates, where the map $f\colon \reals_+ \rightarrow \reals_+$ is a
distance-decreasing differentiable map with positive derivative.  Then it is
clear that any radial shrinking is coarsely homotopic to the identity
map. Moreover, it is not hard to see that also $\exp\circ s\circ \log\colon
M\to M$ is a coarse map coarsely homotopic to the identity.

Now, we can find a radial shrinking $s$ such that the composite $\exp \circ s$
is a coarse map.  By the above remark, the composites $\log \circ \exp \circ
s$ and $\exp \circ s \circ \log$ are coarsely -homotopic to identity maps, and
so the map $\log$ is a coarse homotopy equivalence as claimed.
\end{example}

In particular, Euclidean space $\reals^n$ and hyperbolic space ${\mathbb H}^n$ are coarsely homotopy equivalent.

\section{Metric Cones}
\label{sec:cones}

In this section we collect some basic properties of metric cones. In
particular, we show that for a finite simplicial complex there is a canonical
(euclidean) coarse structure (even metric structure up to bilipschitz
equivalence) on the infinite cone.

Moreover, we prove a regularity result similar to the simplicial approximation
theorem (and based on it): in our context every coarse map is coarsely
equivalent to a Lipschitz map.

\begin{definition}
Let $X$ be a subset of the unit sphere of some real Hilbert space $H$.  Then
we define the {\em metric cone with spherical base} (with the induced metric)
\[ C(X) = \{ tx \ |\ t\geq 0,\ x\in X \} . \]

  If $Y$ is a subset of some real Hilbert space $H$ we define the \emph{metric
    cone with flat base} (with the induced metric)
  \begin{equation*}
    c(Y):= \{(hx,h)\mid h\ge 0,\, x\in Y\}\subset H\times \reals.
  \end{equation*}

  For $R\ge 0$ we set $c_R(Y):=c(Y)\cap H\times [R,\infty)$, that is to say $c_R(Y)$ is the part of the
  cone of height at least $R$. If $Y$ is compact then the inclusion
  $c_R(Y)\hookrightarrow c(Y)$ is a coarse equivalence. Therefore, for us it
  usually is sufficient to consider only the part $c_R(Y)$, which is
  sometimes technically more convenient. 
\end{definition}

\begin{example}
Let $Y= S^n$ be the whole unit sphere.  Then $C(S^n)=\R^{n+1}$.
\end{example}

This definition is further reaching than it first appears.  For example, every
finite $CW$-complex is homeomophic to a subset of the unit sphere of a Hilbert
space, even of a finite dimensional one. However, it is not completely clear
whether the resulting coarse space is uniquely defined, up to coarse
equivalence, by the homeomorphism type of $X$. It is true, however, that a
finite simplicial complex gives rise to a preferred coarse type of the metric
cone (determined by the simplicial structure), what we discuss next.

\begin{lemma}
  Let $X$ be a finite connected simplicial complex. Let $f\colon X\to
  \reals^n$ and $g\colon X\to S^m$ be PL-embeddings.

  Form the cones $c(f(X))$ and $C(g(X))$. We have a canonical homeomorphism
  \begin{equation*}
  \Psi\colon c(f(X))\to C(g(X));
  (hf(x),h)\mapsto hg(x)\quad \text{for }x\in X,h\ge 0.
\end{equation*}

  If we equip each cone with either the subspace metric obtained as restriction of the
  metric on $\reals^{n+1}$ or $\reals^{m+1}$, or with the induced path metric, then the homeomorphsim $\Psi$ and the identity maps $id_{c(f(X))}$ and
  $id_{C(g(X))}$ applied when changing metrics are bilipschitz homeomorphisms.

  In particular, the bilipschitz class does not depend on the chosen
  PL-embedding, on the question whether we use a spherical base as in
  $C(g(X))$ or a euclidean base as in $c(f(X))$, nor on the question
  whether we use the induced metric from the embedding or the induced path metric.

  The same result applies to $c_R(f(X))$ for fixed $R>0$.
\end{lemma}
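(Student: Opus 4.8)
The plan is to deduce every one of the claimed bilipschitz equivalences from a single triangulation argument in the spirit of Lemma~\ref{lem:Dodziuk}, after first disposing of the subspace-versus-path-metric issue. For the latter I would show that on each of the relevant spaces (the cones $c(f(X))$, $C(g(X))$, and the truncated cones $c_R$) the metric induced from the ambient Hilbert space and the induced path metric are bilipschitz; this makes all the ``identity'' maps appearing in the statement bilipschitz and reduces the rest of the proof to the path metrics. One inequality is trivial. For the other, using that $c(f(X))$ is invariant under rescaling about the origin, it is enough to bound the ratio of the two metrics separately: for a pair $p,q$ with $d_{\mathrm{subspace}}(p,q)\ge\tfrac12\max(|p|,|q|)$, where one connects $p$ to $q$ through the cone point along the two radial segments, of lengths $|p|$ and $|q|$; and for a pair with $d_{\mathrm{subspace}}(p,q)<\tfrac12\max(|p|,|q|)$, in which case both points lie in an annular region $\{r/2\le|\cdot|\le 2r\}\cap c(f(X))$ which, after rescaling by $1/r$, is one fixed compact connected polyhedron. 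A compact connected polyhedron is quasiconvex by an elementary argument along the lines of Lemma~\ref{lem:Lip_extend} (finitely many convex simplices glued along convex faces), which gives the desired bound with a constant independent of $r$. For $c_R$ one argues the same way, connecting far-apart points through the height-$R$ slice rather than through a cone point.

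Passing to a common subdivision of $X$ I may assume $f$ and $g$ are simplicial. I triangulate $c(f(X))$ by declaring the part of height at most $1$ to be the union, over the simplices $\langle v_0,\dots,v_k\rangle$ of $X$, of the simplices $\langle(0,0),(f(v_0),1),\dots,(f(v_k),1)\rangle$, and the part between heights $2^m$ and $2^{m+1}$ (for $m=0,1,2,\dots$) to be, over each simplex of $X$, the standard prism decomposition of the corresponding frustum; this is exactly the scaled standard product subdivision (compare Definition~\ref{def:product_simpl}), and the local order on $X$ makes the pieces fit together across shared faces of $X$ and across consecutive heights. Since the frustum between heights $2^m$ and $2^{m+1}$ over a simplex $\sigma$ is precisely $2^m$ times the frustum between heights $1$ and $2$ over $\sigma$, this triangulation has only finitely many strong similarity types of simplices, just as in Lemma~\ref{lem:Dodziuk}. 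Using that $f$ and $g$ are affine on each simplex of $X$, a direct computation shows that $\Psi$ sends each simplex $T$ of this triangulation affinely onto the simplex $\Psi(T)$ with the corresponding vertices ($2^m g(v_i)$ and $2^{m+1}g(v_i)$), that the $\Psi(T)$ form the analogous triangulation of $C(g(X))$, and that $\Psi$ multiplies the scale by $2^m$ on the $m$-th layer; hence there are also only finitely many similarity types among the pairs $(T,\Psi(T))$.

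By Lemma~\ref{lem:simplicial_is_lipschitz}, $\Psi|_T\colon T\to\Psi(T)$ is Lipschitz with constant controlled by the dimensions, the width of $T$ and the diameter of $\Psi(T)$; since the Lipschitz constant of an affine map is unchanged if source and target are simultaneously rescaled, this constant depends only on the similarity type of $(T,\Psi(T))$, of which there are finitely many, so there is a single $L$ with $\Psi|_T$ and $(\Psi^{-1})|_{\Psi(T)}$ both $L$-Lipschitz for every $T$. Each simplex is convex, so on it the intrinsic distance is the ambient straight-line distance; combined with $d_{\mathrm{subspace}}\le d_{\mathrm{path}}$ this shows that $\Psi|_T$ is $L$-Lipschitz also for the metrics induced from the path metrics of the two cones. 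Now $c(f(X))$ with its path metric is complete (here the metric comparison of the first step is used) and locally compact, hence geodesic, and it is the union of the closed simplices $T$, every compact subset being contained in a union of finitely many of them; so the locally finite form of Lemma~\ref{lem:Lip_extend} shows $\Psi$ is globally $L$-Lipschitz, and symmetrically for $\Psi^{-1}$. Thus $\Psi$ is bilipschitz for the path metrics and hence, combining with the first step, for every choice of metrics. Independence of the PL-embedding and the equivalence of flat and spherical bases follow by running the identical argument on the comparison maps $(hf(x),h)\mapsto(hf'(x),h)$ and $(hf(x),h)\mapsto hg(x)$ between any two such cones, and the case of $c_R(f(X))$ is the same with the layers cut at heights $R,2R,4R,\dots$.

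The heart of the argument, and the main obstacle, is the construction above: producing triangulations of the \emph{non-compact} cones that simultaneously have only finitely many strong similarity types of simplices and are carried to one another affinely by $\Psi$, together with the combinatorial verification that the frustum/prism decompositions glue consistently (across faces of $X$ and across the cutting heights). Once this is in place, the metric comparison is routine PL point-set geometry and the Lipschitz conclusion is a formal consequence of Lemmas~\ref{lem:simplicial_is_lipschitz} and~\ref{lem:Lip_extend}.
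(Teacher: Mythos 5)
Your handling of the flat-base cones and of the subspace-versus-path-metric comparison is plausible (and genuinely different from the paper's argument), but the decisive part of the lemma --- the map $\Psi\colon c(f(X))\to C(g(X))$ comparing a euclidean base with a \emph{spherical} base --- does not go through as written. The step ``using that $f$ and $g$ are affine on each simplex of $X$'' is not available for $g$: an affine map on a positive-dimensional simplex has a straight simplex as image, which cannot lie in the round sphere $S^m$; on each simplex a PL-embedding into $S^m$ has the form $x\mapsto \tilde g(x)/\lvert\tilde g(x)\rvert$ for an affine $\tilde g$, and this normalization is genuinely nonlinear. Consequently $\Psi$ is \emph{not} affine on the straight simplices $T$ of your triangulation of $c(f(X))$: already on the edge from $(f(v_0),1)$ to $(2f(v_1),2)$ in the first layer, with $g(v_0)$ and $g(v_1)$ orthonormal, $\Psi$ sends the midpoint to $\tfrac{3}{2\sqrt{5}}\bigl(g(v_0)+2g(v_1)\bigr)$, not to the affine value $\tfrac12 g(v_0)+g(v_1)$. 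Moreover the sets $\Psi(T)$ are not straight simplices, and there is no ``analogous triangulation'' of $C(g(X))$ on the vertices $2^m g(v_i)$, $2^{m+1}g(v_i)$: the portion of the spherical cone between radii $2^m$ and $2^{m+1}$ has curved top and bottom faces and is not a union of convex hulls of those vertices (e.g.\ $\tfrac12\bigl(g(v_0)+g(v_1)\bigr)$ lies in such a hull but at radius $1/\sqrt2<1$). So Lemma \ref{lem:simplicial_is_lipschitz} does not apply to $\Psi$, and the sentence claiming that the spherical case follows ``by running the identical argument'' is precisely the missing content of the lemma.

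The gap can be closed, but it needs an extra ingredient which is in effect the paper's own proof. Factor $\Psi$ as the flat-to-flat radial map $(hf(x),h)\mapsto (h\tilde g(x),h)$ --- which your simplicial/staircase argument does cover, granting the frustum analogue of the combinatorial exercise of Definition \ref{def:product_simpl} that you correctly flag as needing verification --- followed by the normalization $(h\tilde g(x),h)\mapsto h\,\tilde g(x)/\lvert\tilde g(x)\rvert$. The latter commutes with the scalings of the two cones, so it suffices to prove it bilipschitz on the compact part of height (resp.\ radius) at most $1$ and then propagate the constant by rescaling exhausting pieces. That scaling argument is exactly how the paper proves the whole lemma: reduce to the compact cones $c_f(1)$ and $C_g(1)$, where the statement is the classical bilipschitz comparison of subspace and path metrics on compact PL-embedded complexes, observe that $c_f(R)$ and $C_g(R)$ are the $R$-fold rescalings of these with the maps rescaled accordingly, and note that a bilipschitz constant uniform in $R$ gives the global statement by the definition of the Lipschitz property. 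Your annulus argument and doubling-layer triangulation are a legitimate, more self-contained alternative for everything with flat bases, but without the scaling step (or some substitute) for the radial normalization, the bilipschitz property of the main map $\Psi$ is not established.
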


\begin{proof}
  It is well known that for the PL-embeddings $f$ and $g$ the subspace metric
  and the path metric on the image are bilipschitz equivalent.
  Moreover, because the maps are piecewise linear and $X$ is compact, any two
  PL-embeddings either into $\reals^n$ or into $S^m$ induce equivalent metrics
  on $X$.

  Consider now the compact cones (the parts of the full
  cones with height between $0$ and $1$) $c_f(1)$ and $C_g(1)$, where for $R>0$
  \begin{equation*}
    \begin{split}
      c_f(R) &:=\{(hx,h)\in c(f(X)) \mid 0\le h\le R\}\subset \reals^n\times [0,R]\\
      C_g(R) & :=\{tx\in C(g(X)) \mid 0\le t\le R\} \subset
      B_R(0)\subset\reals^{m+1} .
    \end{split}
  \end{equation*}
  These are again PL-embedded simplicial complexes with the resulting induced
  metrics from the embeddings, so that the identity map and the restriction of
  $\Psi$ are bilipschitz homeomorphisms for the restricted metrics and the
  induced path metrics.

  Next, observe that for arbitrary $R>0$, but fixed $f,g$ the parts $c_f(R)$ of
  the cones $c_f(X)$ and 
  $C_g(R)$ of $C_g(X)$ are just scaled versions of $c_f(1)$ and $C_g(1)$. In
  particular, the identity maps (for the path metric versus the restricted
  metric) and the map $\Psi$ (restricted to $c_f(R)$) are
  just a scaling of the corresponding maps on $c_f(1)$ and $C_g(1)$,
  respectively. This implies directly that these maps remain bilipschitz
  homeomorphisms with the same bilipschitz constant as the maps for $R=1$.

  This, in turn, implies that also the maps defined on the full cones
  are bilipschitz with the same bilipschitz constant, by the very definition
  of the Lipschitz property.
\end{proof}

Of course the spaces $c(f(X))$ and $C(g(X))$ are geodesic when equipped with path metrics.

Note that the space $c_R(f(X))$ is not bilipschitz equivalent to the full cone $c_f(X)$.

  \begin{definition}\label{def:triangulate_cone}
    Let $X\subset\reals^N$ be a finite simplicial complex simplically
    embedded.

    Write $c(X)\subset \reals^N\times [0,\infty)$ as the union of the convex
    hull of $0$ and $X\times \{1\}$, the compact cone on $X$ with the obvious
    simplicial structure and the infinitely many copies of $X\times [0,1]$
    given as $ZX(n):= \{(hx,h)\mid x\in X, h\in [n,n+1]\}$ for $n=1,2,\dots$.

    We now define a simplicial structure on $c(X)$ as follows: we use the
    $n$-th standard subdivision of $X$ on $kX\times \{k\}$ for $k\in\naturals$
    with $2^n\le k <2^{n+1}$ and the product simiplicial
    structure of Definition \ref{def:product_simpl} on $ZX(k)$ compatible with
    the so given simplicial structure on 
    the top and the bottom.
  \end{definition}

  \begin{lemma}\label{lem:bounded_geom_of_cone_triang}
    There are only finitely many strong similarity types in the simplicial
    structure of $c(X)$ given in Definition
    \ref{def:triangulate_cone}. Moreover, the lengths of the edges are
    contained in a compact interval $[a,b]$ with $0<a<b<\infty$. In
    particular, there is a positive lower bound on the width of the
    simplices and an upper bound on the diameter.
  \end{lemma}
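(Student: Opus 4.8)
The plan is to reduce the statement to a finite check by splitting $c(X)$ into its finitely many combinatorial building blocks and controlling the geometry of each. First I would write $c(X)$ as the union of the compact cone $K$ (the convex hull of $0$ and $X\times\{1\}$) and the slabs $ZX(k)$, $k\ge 1$. Being a fixed finite simplicial complex with nondegenerate simplices, $K$ contributes only finitely many simplices, hence finitely many strong similarity types, edge lengths in a compact subinterval of $(0,\infty)$, a positive lower bound on width, and an upper bound on diameter. So the whole content sits in the slabs, where the associated subdivision index $n$ (with $2^n\le k<2^{n+1}$) grows without bound and the levels $kX\times\{k\}$ dilate.

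Second, I would isolate two properties of the standard subdivision that power all the estimates. (i) By Lemma~\ref{lem:Dodziuk} there are only finitely many strong similarity types among the simplices of \emph{all} iterated subdivisions $S^n(X)$; in particular these simplices are uniformly fat (width bounded below in terms of diameter) and the ratio (longest edge)$/$(shortest edge) inside any one of them is uniformly bounded. (ii) A single standard subdivision cuts an $m$-simplex into $2^m$ sub-$m$-simplices of equal $m$-volume (an elementary fact, implicit in \cite{Whitney,Dodziuk}), so each $m$-simplex of $S^n(X)$ has $m$-volume exactly $2^{-mn}$ times that of the ambient $X$-simplex. Combining (i) and (ii) — for a uniformly fat simplex, $m$-volume is comparable to $(\diam)^m$ — gives that every simplex, and hence every edge, of $S^n(X)$ has diameter (resp.\ length) comparable to $2^{-n}$, with constants depending only on $X$.

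Third, armed with these comparisons I would run through the kinds of simplices inside a slab $ZX(k)$, $2^n\le k<2^{n+1}$. The \emph{horizontal} simplices, lying in a single level $kX\times\{k\}$, are exactly the $k$-dilates of the simplices of $S^n(X)$; since strong similarity is scale invariant they realise only the finitely many similarity types of Lemma~\ref{lem:Dodziuk}, and their edges are $k$ times edges of $S^n(X)$, hence of length comparable to $k2^{-n}\in[1,2)$. For a \emph{diagonal} simplex of the product (or standard product) structure, a bottom vertex sits at $(kv,k)$ and a top vertex at $((k+1)w,k+1)$, where $v$ and $w$ are vertices of $S^n(X)$, resp.\ of $S^n(X)$ or $S^{n+1}(X)$, lying in a common bottom simplex; the edge joining them has length $\sqrt{\lvert k(v-w)+w\rvert^2+1}$, which is at least $1$ (the vertical component) and at most $k\lvert v-w\rvert+\lvert w\rvert+1$, and this is $O(1)$ since $\lvert v-w\rvert\lesssim\diam S^n(X)\asymp 2^{-n}$ while $k<2^{n+1}$ and $\lvert w\rvert$ is bounded by the diameter of $X$ in $\reals^N$. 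Together with the edges of $K$ this puts all edge lengths of $c(X)$ into a compact interval $[a,b]\subset(0,\infty)$; the finitely many strong similarity types among the diagonal simplices are then handled in the same spirit, reducing first to the finitely many combinatorial patterns of Definition~\ref{def:product_simpl} and the finitely many similarity types of their ambient bottom simplices.

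Finally, for the positive lower bound on the width — and hence, with the diameter bound just obtained, the ``in particular'' clause — the delicate point is again the diagonal simplices, for which a uniform lower bound on width is \emph{not} a formal consequence of the edge lengths lying in $[a,b]$: a simplex with all edges in a fixed bounded range can still be arbitrarily thin. Here one uses that a diagonal simplex spans the full unit-height interval $[k,k+1]$, so its vertices fall into a group at height $k$ and a group at height $k+1$; any vertex then lies at distance at least $1$ from the affine span of an opposite face contained entirely in the other level, and in the single remaining configuration a direct computation with the coordinates above (the relevant distance works out to an expression of the form $\lvert k(v-w)+w\rvert\big/\sqrt{\lvert k(v-w)+w\rvert^2+1}$, bounded below via $k\ge 2^n$ and the comparisons of step two) closes the gap. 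The main obstacle is exactly this last estimate — extracting uniform fatness of the cone triangulation from the concrete shape of the cone map rather than from abstract edge-length control — with a secondary bookkeeping nuisance in the boundary case $k+1=2^{n+1}$, where the top face carries $S^{n+1}(X)$ and the standard product subdivision of Definition~\ref{def:product_simpl} is in force; this contributes only finitely many extra combinatorial patterns, to which the same estimates apply.
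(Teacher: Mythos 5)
Your overall scheme (tip plus cross-sections plus slabs, Lemma~\ref{lem:Dodziuk} for the cross-sections, explicit coordinates for the slab simplices) is the same as the paper's, and your volume argument --- a standard subdivision cuts an $m$-simplex into $2^m$ simplices of equal $m$-volume, whence the simplices and edges of $S^n(X)$ have size comparable to $2^{-n}$ --- is a cleaner, more quantitative route to the edge-length interval $[a,b]$ than the paper's edge-halving remarks. Up to that point the proposal is sound, and the diameter bound follows since the diameter of a simplex is its longest edge.

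The gaps are exactly in the two places you defer to ``the same spirit''. First, the finiteness of strong similarity types of the diagonal simplices does not reduce to ``finitely many combinatorial patterns times finitely many types of ambient bottom simplices'': a slab is a frustum whose bottom level is scaled by $k$ and whose top level by $k+1$, so its simplices are not dilates of product-pattern simplices. Indeed every diagonal simplex has vertical extent exactly $1$, so a strong similarity between two of them forces scaling factor $1$, i.e.\ translation, while the horizontal data (for instance bottom edge lengths, which are $k$ times edge lengths of $S^n(X)$ and take infinitely many values) varies; so the reduction you sketch cannot be completed as stated (the paper's own one-line justification, which treats the slab simplices as scaled product simplices, glosses over the same point). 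Second, and decisive for your own plan, the width estimate fails. For a vertex of a simplex having several vertices on each level, the opposite face contains vertices at both heights, so it is neither ``contained entirely in the other level'' nor covered by a ``single remaining configuration''; and the quantity you propose as the lower bound, the horizontal displacement of a diagonal edge (which is $k(w-v)+w$, not $k(v-w)+w$), is not bounded below at all: it vanishes whenever $(k+1)w=kv$, e.g.\ for the vertical edges over a vertex of $X$ at the origin of $\reals^N$, and it is arbitrarily small when $X$ has vertices near the origin. Hence ``bounded below via $k\ge 2^n$'' is false, and uniform fatness of the slab simplices is not established, even though it is true. A workable substitute is to compare each slab simplex with the corresponding simplex of the prism triangulation of $kS^n(X)\times[0,1]$: the top vertices differ from that model by one common horizontal shear by a vector of norm at most $\sup_{x\in X}\abs{x}$ plus per-vertex perturbations of size $O(2^{-n})=O(1/k)$, and the widths of the (finitely many up to scale) prism simplices are bounded below and are stable under bounded shears and under vertex perturbations that are small compared to that bound.
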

  \begin{proof}
    Scaling does not change the strong similarity type, therefore by Lemma
    \ref{lem:Dodziuk} there are only finitely many strong similarity types
    among the simplices of the cross sections $kx\times \{k\}$ for
    $k\in\naturals$. The remaining simplices are obtained from these by two
    procedures to obtain triangulations of $X\times [0,1]$ subdividing
    $\sigma\times [0,1]$ for a simplex $\sigma$, which results in finitely
    many new strong similarity types for each similarity type of
    $\sigma$. which are then also further scaled to obtain the simplices of
    $c(X)$. Furthermore, there are finitely many more simplices at the tip of
    the cone.

    The lengths of the edges in our triangulation are bounded above because we
    perform a further standard subdivision of the cross-section (which halfs
    each original edge) as soon as the complex is scaled by $2$ in $kX\times
    \{k\}$. The standard subdivision procedure does only produce edges whose
    length is at least half the length of an edge of the original simplicial
    complex. Therefore, in the cross sections $kX\times \{k\}$ the edges are
    never shorter than the shortest edge of the original triangulation of
    $X$. The statement about the lower and upper bound on the geometry of the
    simplices of the triangulation now follows immediately.
  \end{proof}

The following proposition is needed for the technical heart of our
construction to prove the main result, contained in Section
\ref{sec:groups_of_cones}. It says 
in a very precise way that concepts of coarse maps and
coarse homotopies between cones of finite simplicial complexes can be reduced
to proper Lipschitz maps and coarse Lipschitz homotopies.

\begin{proposition}\label{prop:cone_lipschitz_appr}
  Let $X,Y\subset \reals^N$ be finite geometric simplicial complexes with
  subcomplexes $X_0\subset X$, $Y_0\subset Y$ and with cones
  $c(X)$, $c(Y)$ respectively. Then every coarse
  map of pairs $\phi\colon (c(Y),c(Y_0))\to (c(X),c(X_0))$ is close
  (i.e.~coarsely equivalent) to a proper Lipschitz 
  map of pairs $f\colon (c_L(Y),c_L(Y_0))\to (c(X),c(X_0))$ where we restrict
  the domain to the coarsely equivalent $c_L(X)$ for a 
  suitable $L>0$. The map $f$ can be chosen to be simplicial for triangulations of
  the cones as in Definition \ref{def:triangulate_cone}.

  Moreover, if the map $\phi$ is already Lipschitz when restricted to $c(Y_1)$ for a
  further subcomplex $Y_1$ of $Y$, then the maps $\phi$ and the $f$ constructed
  in the process and restricted to $c_L(Y_1)$ are Lipschitz homotopic as maps
  of pairs $(c_L(Y_1),c_L(Y_1\cap Y_0))\to c(X),c(X_0))$. Even better, the above map
 $f$ can be replaced by a coarsely equivalent
  Lipschitz map $\bar f$, which coincides with
  $\phi$ on $c(Y_1)$).

  Finally, suppose the maps $\phi,\psi\colon (c(Y),c(Y_0))\to (c(X),c(X_0))$ are equivalent by a coarse homotopy that is proper
  Lipschitz when restricted to $c(Y_1)$. Let $f$ and $g$ be proper Lipschitz
  maps constructed above coarsely equivalent to $\phi$ or $\psi$,
  respectively, with 
  $f|_{c(Y_1)}=\phi|_{c(Y_1)}$ and $g|_{c(Y_1)}=\psi|_{c(Y_1)}$.
  Then there is  a proper Lipschitz homotopy of pairs between $f$ and $g$ which coincides with
  the original homotopy on $c(Y_1)$.
 
  % Finally, if $x_0\in X$ and $y_0\in Y$ are basepoints with associated
  % $\reals_+$-basepoints $i_{x_0}\colon \reals_+\to c(X); t\mapsto (tx_0,t)$
  % and $i_{y_0}\colon \reals_+\to c(Y); t\mapsto (ty_0,t)$ and if the coarse
  % maps and coarse homotopies preserve the $\reals_+$-basepoints $i_{x_0}$ and
  % $i_{y_0}$ then we can arrange that the resulting close Lipschitz maps and
  % Lipschitz homotopies do the same.
\end{proposition}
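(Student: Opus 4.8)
The plan is to reduce all three assertions to a single coarse-to-simplicial approximation, the first assertion, and then bootstrap the relative statements from it. Write $p_X\colon c(X)\to\R_+$ and $p_Y\colon c(Y)\to\R_+$ for the height (projection onto the cone parameter); these are coarse maps, and bounded subsets of a cone are exactly those of bounded height, so shrinking domains via the coarse equivalences $c_L(\cdot)\hookrightarrow c(\cdot)$ is harmless. The key structural observation is: the composite $u:=p_X\circ\phi$ is a coarse, hence (on the path-metric space $c(Y)$) large-scale Lipschitz, and proper map $c(Y)\to\R_+$, so $u(z)\to\infty$ as $p_Y(z)\to\infty$; and writing, away from the tip, $\phi(z)=(u(z)\,a(z),u(z))$ with $a(z)$ in the compact base $X$, a direct estimate from the fact that $\phi$ is controlled and $X$ is bounded shows that the cross-section part $a$ has \emph{modulus of continuity vanishing deep in the cone}: $d(z,z')\le b$ implies $d_X(a(z),a(z'))\le C_1/u(z)$ for a constant $C_1=C_1(\phi,X)$. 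Although $\phi$ itself need not be continuous, its cross-section part is thus almost continuous precisely where the base triangulation of $c(X)$ in Definition~\ref{def:triangulate_cone} becomes correspondingly fine. By Lemma~\ref{lem:Dodziuk} the ``simplicial injectivity radius'' of the standard subdivisions $S^nX$ (the largest $\delta$ for which every $\delta$-ball lies in an open vertex star) is bounded below by a fixed multiple of the mesh; so after fixing a subdivision offset (using $S^{n-c}X$ in place of $S^nX$ at each level for a fixed $c$, still a triangulation of the type of Definition~\ref{def:triangulate_cone}) and then, using properness of $u$, choosing $L$ so large that $u\ge U_0$ on $c_L(Y)$ with $C_1/U_0$ below that injectivity radius at the relevant levels, we are in business.

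I would then define $f$ vertex by vertex, respecting the cone structure: for a vertex $v$ of the triangulation of $c_L(Y)$ at level $k$, put $k':=\lfloor u(v)\rfloor$, let $S^{n'}X$ be the base triangulation used at level $k'$ in $c(X)$, and set $f(v)$ to be the vertex of $c(X)$ at level $k'$ whose base coordinate is a chosen simplicial-approximation vertex of $a(v)$ in $S^{n'}X$ --- the choices made consistently (a genuine simplicial approximation of $a$ on each simplex of $c_L(Y)$) and inside $X_0$ whenever $v\in c_L(Y_0)$. For the vertices of a simplex of $c_L(Y)$ the heights $k'$ agree up to at most one level transition, absorbed by the product subdivision of Definition~\ref{def:product_simpl}, and their base coordinates span a simplex of $S^{n'}X$ by the usual simplicial-approximation criterion (the open stars meet); hence the chosen vertices span a simplex of $c(X)$, and $f$ extends to a simplicial map of pairs $f\colon(c_L(Y),c_L(Y_0))\to(c(X),c(X_0))$. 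It is Lipschitz by Lemma~\ref{lem:simplicial_is_lipschitz} together with the uniform geometry of the triangulations (Lemma~\ref{lem:bounded_geom_of_cone_triang}) and the gluing Lemma~\ref{lem:Lip_extend}. It is close to $\phi$ --- the decisive point: the base error $d_X(f(v),a(v))$ is at most the mesh of $S^{n'}X$, i.e.\ $O(1/k')$, which after multiplication by the height $k'$ contributes only $O(1)$ in $c(X)$, while the height error $|p_X(f(v))-u(v)|$ is $O(1)$ outright. Being close to the proper map $\phi$, $f$ is itself proper.

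For the second assertion, with $\phi$ already proper Lipschitz on $c(Y_1)$: to get $\bar f$ with $\bar f|_{c(Y_1)}=\phi|_{c(Y_1)}$ one runs the construction above relative to $c_L(Y_1)$ --- after subdividing finely enough that $\phi|_{c(Y_1)}$ is itself simplicial (possible since it is Lipschitz, hence of small modulus after subdivision), keep it there and extend the vertex assignment into $c_L(Y)$ exactly as before via relative simplicial approximation, without affecting closeness or the Lipschitz/proper properties. For the Lipschitz homotopy, $f|_{c_L(Y_1)}$ and $\phi|_{c_L(Y_1)}$ are uniformly close and both Lipschitz, hence Lipschitz homotopic via the ``straight-line homotopy in the cone'': interpolate between the two maps within $c(X)$ on each simplex times $[0,1]$, which is well behaved at large heights for the same reason as the closeness estimate (uniform closeness forces the base coordinates into a common fine simplex) together with the uniform geometry of $c(X)$; organising this along the product triangulation of Definition~\ref{def:product_simpl} and gluing with Lemma~\ref{lem:Lip_extend} gives a proper Lipschitz homotopy of pairs. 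The third assertion follows the same pattern: after normalising $H$ so that $p$ is the height (Lemma~\ref{lem:normalize_homotopy}), the $p$-cylinder $I_p(c(Y))$ is again of cone type, carrying a bounded-geometry triangulation built from cross-sections and product triangulations, so Part~1 applies to $H$ and produces a proper Lipschitz simplicial $\widetilde H$ close to $H$, kept equal to $H$ over $c(Y_1)$ by the relative version just described. Its ends $f_0:=\widetilde H\circ i_0$ and $f_1:=\widetilde H\circ i_1$ are proper Lipschitz, close to $\phi$ and $\psi$ and agreeing with $f$ and $g$ on $c(Y_1)$; thus $f_0$ and $f$ (resp.\ $f_1$ and $g$) are uniformly close Lipschitz maps agreeing on $c(Y_1)$, hence Lipschitz homotopic rel $c(Y_1)$ by the straight-line-in-the-cone argument, and concatenating $f\simeq f_0$, then $\widetilde H$, then $f_1\simeq g$ --- reparametrising in time as in the transitivity proof of Theorem~\ref{equivalence} --- gives the required proper Lipschitz homotopy of pairs between $f$ and $g$ coinciding with $H$ on $c(Y_1)$.

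The main obstacle is the quantitative balancing act inside the first assertion: the cross-section part of $\phi$ must be simplicially approximated into a base triangulation of $X$ that is simultaneously fine enough that the approximation error, once amplified by the large height, stays bounded, and coarse enough that the vanishing modulus of continuity of the cross-section part still fits inside an open vertex star --- reconciling these is exactly what forces the height-dependent standard subdivisions of Definition~\ref{def:triangulate_cone}, the fixed subdivision offset, and the restriction to $c_L(Y)$. Everything afterwards (properness, the relative refinements, and the extra interval direction in Part~3) is bookkeeping organised around this estimate, the most delicate part of which is keeping the vertex assignments simplicial across the level transitions where the base triangulation changes.
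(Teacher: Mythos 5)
Your overall strategy (simplicial approximation of $\phi$ into the bounded-geometry triangulations of Definition \ref{def:triangulate_cone}, straight-line homotopy to get closeness/relative Lipschitz homotopy, and reduction of the homotopy statement via Lemma \ref{lem:normalize_homotopy} and $I_pc(Y)=c(Y\times[0,1])$) is the paper's strategy, but the way you implement the approximation has a real hole. Splitting $\phi(z)=(u(z)a(z),u(z))$ and assigning to a vertex $v$ the level $\lfloor u(v)\rfloor$ separately from a base-wise simplicial approximation of $a$ does not obviously produce a simplicial map. Over a single simplex of $c_L(Y)$ the height $u=p_X\circ\phi$ varies by the control constant of $\phi$ at the star scale, which is a finite constant but has no reason to be $\le 1$; so your assigned levels $k'$ can differ by several, whereas the triangulation of $c(X)$ only has simplices spanning adjacent levels. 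Your fixed subdivision offset $S^{n-c}$ coarsens only the cross-sections and leaves the vertical spacing equal to $1$, so it cannot absorb this. Moreover, even when the levels differ by exactly one, the fact that the open stars of the chosen base vertices meet in $S^{n'}X$ does not imply that the chosen cone vertices span a simplex of the standard product subdivision of Definition \ref{def:product_simpl}: that is a combinatorial condition in the cone, not in the base. The paper sidesteps both problems by applying the star condition directly in $c(X)$: Lemma \ref{lem:bounded_geom_of_cone_triang} gives a positive Lebesgue number $r$ for the open-star cover of the triangulation of $c_1(X)$, one rescales the \emph{entire} cone triangulation by a uniform constant so that every $R$-ball (with $R\ge\diam\phi(\St(v))$ for all vertices $v$) lies in an open star, chooses $L$ by properness so that $\phi(c_L(Y))\subset c_{L'}(X)$, and then the classical Spanier argument produces $w_v$ with $\phi(\St(v))\subset$ the open star of $w_v$; simpliciality and the distance bound (by the maximal simplex diameter) are then automatic. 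Your construction can be repaired along the same lines (coarsen the vertical direction as well, i.e.\ impose the star condition in the cone rather than in the base), but as written the step ``hence the chosen vertices span a simplex of $c(X)$'' fails.

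The second genuine gap is in your relative statement: to force $\bar f|_{c(Y_1)}=\phi|_{c(Y_1)}$ you propose to ``subdivide finely enough that $\phi|_{c(Y_1)}$ is itself simplicial (possible since it is Lipschitz)''. This is false: a Lipschitz map is in general affine on no simplex of any subdivision, so no subdivision makes it simplicial, and Zeeman's relative simplicial approximation theorem cannot be invoked this way. The paper does something different: it keeps $f$ simplicial everywhere, notes that the straight-line homotopy $H(x,t)=(1-t)\phi(x)+tf(x)$ is Lipschitz on $c_L(Y_1)\times[0,1]$ because $\phi$ is Lipschitz there (with the explicit estimate in terms of the Lipschitz constants and the simplex diameter $D$), and then corrects $f$ by a Lipschitz ``cofibration'' retraction $R\colon c_L(Y)\to c_L(Y_1)\times[0,1]\cup_{c_L(Y_1)}c_L(Y)$ built simplex-by-simplex (globally Lipschitz because, by Lemma \ref{lem:bounded_geom_of_cone_triang}, only finitely many configurations occur up to scaling), setting $\bar f$ equal to the composition of $R$ with ($f$ on $c_L(Y)$, $H$ on the cylinder). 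Your part on the Lipschitz homotopy between $f|_{c_L(Y_1)}$ and $\phi|_{c_L(Y_1)}$, and your treatment of the third assertion via normalizing the coarse homotopy and viewing $I_pc(Y)$ as $c(Y\times[0,1])$, do match the paper; but since the third assertion uses the relative construction, it inherits the gap above.
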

\begin{proof}
  The strategy is to replace our map by a simplicial map for suitable and
  regular enough triangulations. The Lipschitz property will then follow from
  Lemma \ref{lem:simplicial_is_lipschitz}.

  We choose the triangulation of $c(X)$ as in Definition
  \ref{def:triangulate_cone}. 

  To obtain the desired simplicial map we follow the method of proof of the
  classical simplicial approximation
  theorem \cite[Section 3.4]{Spanier} and \cite{Zeeman}.

  For this, choose $R>0$ such that $\diam(\phi(\St(x)))\le R$ for every vertex
  $x$ 
  in $c(Y)$, where $\St(x)$ is the closed star of the vertex $x$. This is
  possible due to Lemma 
  \ref{lem:bounded_geom_of_cone_triang} (which gives a uniform upper bound on
  the diameters of all such stars) and the fact that $\phi$ is a coarse
  map between metric spaces.

  Next, consider the triangulation of Definition \ref{def:triangulate_cone} on
  $c_1(X)$. By Lemma \ref{lem:bounded_geom_of_cone_triang}, the  simplices of
  this triangulation are obtained from finitely 
  many congruence types, scaled by elements in $[a,b]$ for a compact subset of
  $(0,\infty)$. This implies that there is $r>0$ such that every $r$-ball is
  contained in the open star of a vertex (the covering by open stars of
  simplices has Lebesgue number $\ge r$). Dually, by just scaling we obtain:
  there is $L'>0$ such that for the $C$-scaled triangulation of Definition
  \ref{def:triangulate_cone} on $c_{L'}(X)$ every $R$-ball is contained in the 
  open star of a simplex.

  Use now the properness of the map $\phi$ to choose a natural number $L>0$
  such that
  $\phi(c_{L}(Y))\subset c_{L'}(X)$. Now, the standard conditions for the proof
  of the simplicial approximation theorem of \cite[Section 3.4]{Spanier}
  are satisfied: given any vertex $v$ of our triangulation of $c_{L}(Y)$, the
  images of the collection of all vertices connected to $v$ by an edge is
  contained in an open star of a vertex $w_v$ of the chosen triangulation of
  $c_{L'}(X)$. Consequently, we can now define a simplicial map $f\colon
  c_{L}(Y)\to c_{L'}(X)$ defined by sending each vertex $v$ to an appropriate
  vertex $w_v$. Automatically, as in \cite[Corollary 3.4.4]{Spanier} the subcomplex $Y_0$ will be mapped to the subcomplex $X_0$ by
  this construction. Moreover, $f$ and $\phi$ have distance at most $D$, where
  $D$ is an upper bound on the diameters of the simplices of our scaled
  triangulation of $c(X)$. Up to scaling, there are only finitely many
  isometry types of simplices. By Lemma \ref{lem:simplicial_is_lipschitz} and
  Lemma \ref{lem:Lip_extend}, the map $f$ is globally Lipschitz.

  The standard construction of simplicial approximation provides a well defined
  ``straight line'' homotopy of pairs $H\colon c_L(Y)\times [0,1]\to
  c_{L'}(X)$ between 
  $\phi$ and $f$, with $H(x,t)= (1-t)\phi(x)+ t f(x)$ where our construction
  makes sure that this is indeed making sense and given by a path inside a
  simplex of $c_{L'}(X)$. In particular, throughout this homotopy $X_0$ is
  mapped to $Y_0$. Of course, if $\phi$ is not continuous also $H$ is
  not. However, if $\phi$ is Lipschitz then the triangle inequality implies
  that $H$ is also Lipschitz, with Lipschitz constant determined by the
  maximum $K$ of the Lipschitz constants of $f$ and $\phi$ and by
  $D$. Concretely, if $x,y$ lie 
  in the same simplex of $Y$ (which suffices to consider) and $0\le s,t\le 1$
  then 
  \begin{equation*}
    \begin{split}
      |H(x,t)-H(y,s)| &= |(1-t)\phi(x)+tf(x) -(1-s)\phi(y)-sf(y)|\\
       & \le (1-t) |\phi(x)-\phi(y)| + t |f(x)-f(y)| + |t-s|\cdot
       |\phi(y)-f(y) |\\
      &\le K d(x,y) + |t-s|\cdot D
    \end{split}
  \end{equation*}
  which implies the claim. The same argument applies when we restrict
  everything to a subcomplex $c_L(Y_1)$ on which $\phi$ is Lipschitz.

  We now use this Lipschitz homotopy $H$ to change $f$ to coincide with $\phi$
  on the subcomplex $c_L(Y_1)$. For this, we use a geometric topological
  implementation of the fact that the inclusion of $c_L(Y_1)$ into $c_L(Y)$ is
  a cofibration. More specifically, consider the space $Z:=c_L(Y_1)\times
  [0,1]\cup_{c_L(Y_1)} c_L(Y)$ where we use the embedding $c_L(Y_1)\to
  c_L(Y_1)\times [0,1]; y\mapsto (y,1)$ to glue.

  We now construct a map $R\colon c_L(Y)\to Z$ which maps $c_L(Y_1)$ to
  $c_L(Y_1)\times \{0\}$ in the obvious way and which is the identity on all
  simplices of $c_L(Y)$ not touching $c_L(Y_1)$.

  Such a map is constructed by ``stretching out'' a simplex $c\sigma$ of
  $c_L(Y)$ with a face $\tau:=\sigma\cap c_L(Y_1)$ not equal to $\sigma$ to
  $\tau\times [0,1]\cup_{\tau\times\{1\}}\sigma$, i.e.~by choosing
  (compatible with face restrictions) suitable maps
  \begin{equation*}
    R_\sigma\colon \sigma\to \tau\times[0,1]\cup_{\tau\times\{1\}}\sigma
  \end{equation*}
  sending the face $\tau$ identically to $\tau\times\{0\}$ and the
  complementary face $\tau^\perp$ (spanned by all simplices of
  $\sigma\setminus\tau$) identically to $\tau^\perp$. It is an elementary
  observation that this can be done, and that this can be done such that
  restricted to each simplex the map is Lipschitz (albeit not affine
  linear). But now, because up to scaling we have only finitely many
  configurations due to Lemma \ref{lem:bounded_geom_of_cone_triang}, it
  suffices to use
  finitely maps $R_\sigma$ up to scaling to construct the map $R$. This implies
  that $R$ is globally Lipschitz.

  The map $\bar f$ is now defined as the composition of $R$ with the union of
  $f$ on $c_L(Y)\subset Z$ and the homotopy $H$ on $c_L(Y_1)\times
  [0,1]\subset Z$ which as a composition of unions of Lipschitz maps is still
  Lipschitz and also clearly coarsely equivalent to $f$.

  The statement about homotopies follows from the general (relative) statement
  applied to the coarse homotopy which by Lemma \ref{lem:normalize_homotopy}
  we can assume to be defined on $I_pc(Y)$ for $p\colon c(Y)\to \reals_+;
  (hy,h)\mapsto y$ the
  standard height projection. But, then $I_pc(Y)=c(Y\times [0,1])$, so that
  indeed we are in the situation already discussed.
\end{proof}

% \subsection{Maps between metric cones}

\begin{definition}\label{def:radial_map}
%   Let $X$ and $Y$ be subsets of the unit spheres of Hilbert spaces, say $H_X$ and $H_Y$.  Let $f\colon X\rightarrow Y$ be a continuous map.  Then we have an induced map $f_{C} \colon C(X) \rightarrow C(Y)$ defined by the formula $f_C (tx) = tf(x)$, where $t\in \R_+$ and $x\in X$.
% We call a map of the form $f_C$ a {\em radial map}.

% Similarly, i
For $X,Y\subset\reals^n$ we form the cones $c(X)\subset X\times
[0,\infty)$ and $c(Y)\subset Y\times [0,\infty)$. A map $f\colon X\to Y$
induces a \emph{radial} map $c(f)\colon c(X)\to f(Y); (hx,h)\mapsto (hf(x),h)$.  
\end{definition}

\begin{remark}
  Similarly, for cones with spherical base one defines the radial map $f_C$
  induced by a 
  map $f$ between the bases of the cones.   Unfortunately, the maps $f_C$ and
  $c(f)$ are not in general coarse. They are, however, if the initial map is
  Lipschitz. 
\end{remark}

\begin{proposition}\label{prop:cone_map_Lipschitz}
Let $X$ and $Y$ be bounded subsets of Hilbert spaces (with diameter bounded by
$D$).  Let
$f\colon X\rightarrow Y$ be a proper Lipschitz map.  Then the induced map
$c(f)$ is a proper Lipschitz map. In particular, the map $c(f)$ is coarse.
\end{proposition}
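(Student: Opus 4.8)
The plan is to verify directly that $c(f)\colon c(X)\to c(Y)$ satisfies the Lipschitz condition (which also gives that it is controlled), and separately that it is proper (which gives that the preimage of a bounded set is bounded, hence $c(f)$ is coarse). For the Lipschitz estimate, write $C$ for the Lipschitz constant of $f$. Given two points $(hx,h)$ and $(h'x',h')$ of $c(X)\subset H\times\reals$, I would estimate
\begin{equation*}
  |(hf(x),h)-(h'f(x'),h')|^2 = |hf(x)-h'f(x')|^2 + |h-h'|^2 .
\end{equation*}
The first term I would split as $|hf(x)-h'f(x')| \le h|f(x)-f(x')| + |h-h'|\,|f(x')| \le Ch\,|x-x'| + D|h-h'|$, using the Lipschitz bound on $f$ and the fact that $|f(x')|\le D$ since $Y$ has diameter at most $D$ (after translating so that $Y$ — equivalently $f(X)$ — lies in a ball of radius $D$ around the origin). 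Similarly $h|x-x'| \le |hx-h'x'| + D|h-h'|$. Combining, $|(hf(x),h)-(h'f(x'),h')|$ is bounded by a fixed multiple of $|hx-h'x'| + |h-h'|$, which in turn is bounded by a fixed multiple of $|(hx,h)-(h'x',h')|$. Hence $c(f)$ is Lipschitz with a constant depending only on $C$ and $D$.

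For properness, let $B\subset c(Y)$ be bounded, so $B\subset H\times[0,N]$ for some $N$ and $B$ has finite diameter. A point $(hx,h)\in c(X)$ with $c(f)(hx,h)=(hf(x),h)\in B$ then has $h\le N$, so the preimage lies in $c(X)\cap(H\times[0,N])$; since $X$ is bounded this set is bounded in $H\times\reals$, and therefore $c(f)^{-1}(B)$ is bounded. (If one prefers to argue via the metric-space characterization of properness of $f$, one can instead note that $c(f)^{-1}(B)$ projects under the height coordinate into $[0,N]$ and under $x\mapsto f(x)$ into the bounded set $\{y : (hy,h)\in B\text{ some }h\}$, and use properness of $f$; but the crude bound above already suffices since $X$ itself is bounded.)

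Putting the two parts together: a Lipschitz map between metric spaces is controlled for the bounded coarse structures, and we have just shown the inverse image of a bounded set is bounded, so $c(f)$ is a coarse map. I do not expect any genuine obstacle here — the only point requiring a little care is the cross-term $|h-h'|\,|f(x')|$ in the Lipschitz estimate, which is exactly where the hypothesis that $Y$ is bounded (rather than merely $f$ being Lipschitz) is used; without that boundedness assumption the induced radial map need not be coarse, as noted in the remark following Definition \ref{def:radial_map}.
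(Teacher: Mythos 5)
Your proof is correct and follows essentially the same route as the paper: the same triangle-inequality splitting of $|hf(x)-h'f(x')|$, using the boundedness of $X$ and $Y$ to absorb the cross-terms $|h-h'|\,|f(x')|$ and $|h-h'|\,|x'|$, which is exactly the paper's estimate (up to the harmless normalization that ``diameter $\le D$'' is used as a bound on the norms). The only minor difference is the properness step, where you note that bounded preimages already follow from the boundedness of $X$ alone, whereas the paper invokes the properness of $f$ to get compact preimages of compact sets; either way the coarseness conclusion follows.
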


\begin{proof}
If $B\subseteq c(Y)$ is compact, then the inverse image
$c(f)^{-1}[B]\subseteq c(X)$ is also compact by the properness of $f$.

Let $L$
be the Lipschitz constant of $f$. 
Let $R>0$, $s,t\in \R_+$ and $x,y\in X$, and suppose that $\| (sx,s) - (ty,t)
\| < R$.  Then it follows that $|s-t|<R$ and by the triangle inequality
\begin{equation*}
  s \|x-y\| \le \|sx-ty\| + |s-t|\|y\| \le R + RD.
\end{equation*}

Now
\[  \begin{array}{rcl}
\| c(f)(sx,x ) - c(f)(ty,t) \| & = &  \| (s f(x) - t f(y),s-t) \| \\ 
& \leq & 2(s \| f(x)-f(y)\| + |s-t| \| f(y) \| +|s-t|)  \\
                             & \leq & 2(Ls  \| x-y \| + (D+1)|s-t|) . \\
      &\leq & 2(L+1)(D+1)\cdot R
\end{array}\]
so the map $c(f)$ is Lipschitz with Lipschitz constant $\le 2(L+1)(D+1)$.
\end{proof}

Furthermore, the condition that the map $f\colon X\to Y$ is Lipschitz is not a
severe one up to homotopy, as the next result shows.

\begin{lemma}\label{lem:Lipschitz_approxi}
Let $(X,X_0)$ and $(Y,Y_0)$ be pairs of finite simplicial complexes, equipped
with simplicial metrics.  Let $f\colon (X,X_0) \rightarrow (Y,Y_0)$ be a
continuous map.  Then $f$ is homotopic to a Lipschitz map. 

Further, if $f_0,f_1\colon X\rightarrow Y$ are homotopic maps, and
$g_0,g_1\colon X\rightarrow Y$ are Lipschitz maps homotopic to $f_0$ and $f_1$
respectively, then we have a Lipschitz map $H\colon X\times [0,1]\rightarrow
Y$ such that $H(-,0)=g_0$ and $H(-,1)=g_1$.

If the map $f$ or the homotopy $H$ is already simplicial (and hence Lipschitz)
when restricted to a subcomplex $A\subset X$ then we can choose the Lipschitz
map and Lipschitz homotopy relative to $A$ (i.e.~ restricted to $A$ all maps
and homotopies coincides with the given ones).
\end{lemma}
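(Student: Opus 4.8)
The plan is to prove Lemma~\ref{lem:Lipschitz_approxi} by invoking the classical simplicial approximation theorem and then controlling Lipschitz constants via the finitely many geometric types of simplices involved. First I would establish the bare existence statement: given a continuous map $f\colon(X,X_0)\to(Y,Y_0)$, I subdivide $X$ (barycentrically, say) sufficiently many times so that $f$ satisfies the star condition with respect to the triangulation of $Y$, and then pick a simplicial approximation $g\colon X'\to Y$ where $X'$ is the subdivided complex. Since $X'$ is a \emph{finite} simplicial complex, there are only finitely many simplices, hence the affine-linear restrictions of $g$ to each have uniformly bounded Lipschitz constant by Lemma~\ref{lem:simplicial_is_lipschitz}; applying Lemma~\ref{lem:Lip_extend} to the closed cover of $X$ by the top-dimensional simplices of $X'$ shows $g$ is globally Lipschitz. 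The standard ``straight-line'' homotopy $H(x,t)=(1-t)f(x)+tg(x)$, which makes sense because $g(x)$ lies in the carrier simplex of $f(x)$, provides the homotopy from $f$ to $g$; automatically this respects the subpair $(X_0,Y_0)$, as in \cite[Corollary 3.4.4]{Spanier}.

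For the second statement, suppose $f_0,f_1$ are homotopic via $F\colon X\times[0,1]\to Y$, with $g_i$ Lipschitz and homotopic to $f_i$. I glue the three homotopies $g_0\simeq f_0$, $f_0\simeq f_1$, $f_1\simeq g_1$ into a single continuous map $G\colon X\times[0,1]\to Y$ with $G(-,0)=g_0$, $G(-,1)=g_1$ (reparametrizing the interval). Now I want to replace $G$ by a Lipschitz homotopy $H$ with the \emph{same endpoints}. I apply the relative version of simplicial approximation (which I prove next) to $G\colon(X\times[0,1], X\times\{0,1\})\to Y$, keeping it fixed on $X\times\{0\}$ and $X\times\{1\}$ where it already agrees with the simplicial (hence Lipschitz) maps $g_0$, $g_1$ after a preliminary subdivision that makes $g_0$, $g_1$ simplicial. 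The output is a Lipschitz map on a subdivided product complex that agrees with $g_0$, $g_1$ on the ends.

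For the relative refinement, the key input is the cofibration property of $(X,A)$ for a subcomplex $A$: there is a Lipschitz retraction-like map $R\colon X\to A\times[0,1]\cup_A X$ that is the identity off the simplices meeting $A$ and ``stretches out'' each simplex $\sigma$ with face $\tau=\sigma\cap A$ to $\tau\times[0,1]\cup_{\tau\times\{1\}}\sigma$. Since a finite complex has only finitely many isometry types of simplices (and here \emph{exactly} finitely many, being finite), one only needs finitely many model maps $R_\sigma$, so $R$ is globally Lipschitz. One then defines the corrected map as the composition of $R$ with the union of the already-simplicial map on $X$ and the straight-line homotopy on $A\times[0,1]$; this is a composition of (unions of) Lipschitz maps, hence Lipschitz, and it agrees with the given map on $A$. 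This is exactly the mechanism already carried out in the proof of Proposition~\ref{prop:cone_lipschitz_appr}, so I would simply point to it.

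The main obstacle I anticipate is the bookkeeping in the relative homotopy statement: one must subdivide $X\times[0,1]$ compatibly with the subdivisions of $X$ that were needed to make $g_0$ and $g_1$ simplicial, make sure the straight-line homotopy stays inside carriers throughout, and verify that the cofibration ``stretching'' map $R$ can be chosen simultaneously compatible with all face identifications while remaining Lipschitz with a uniform constant. None of these steps is conceptually deep — each reduces to the finiteness of simplex types plus Lemmas~\ref{lem:simplicial_is_lipschitz} and~\ref{lem:Lip_extend} — but assembling them without sign or indexing errors is where the real work lies; fortunately the identical construction in the proof of Proposition~\ref{prop:cone_lipschitz_appr} can be cited nearly verbatim.
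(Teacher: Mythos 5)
Your first step and your overall strategy (simplicial approximation, finiteness of simplex types, Lemmas~\ref{lem:simplicial_is_lipschitz} and~\ref{lem:Lip_extend}) are in line with the paper, but there is a genuine gap in your treatment of the homotopy statement. You propose to apply the \emph{relative} simplicial approximation theorem to the glued homotopy $G\colon X\times[0,1]\to Y$ keeping it fixed on $X\times\{0,1\}$, ``after a preliminary subdivision that makes $g_0$, $g_1$ simplicial.'' No such subdivision exists in general: the hypotheses only give that $g_0$ and $g_1$ are Lipschitz, and a Lipschitz map between simplicial complexes need not be piecewise linear (think of a smooth nonlinear map), so it cannot be made simplicial by subdividing the domain. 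Zeeman's relative theorem requires the map to be simplicial already on the subcomplex that is to be kept fixed, so this step, as written, does not go through, and without it your construction does not produce a Lipschitz $H$ with $H(-,0)=g_0$ and $H(-,1)=g_1$ exactly. The repair is standard: apply (non-relative) simplicial approximation to $G$ on a suitable triangulation of $X\times[0,1]$ to get a Lipschitz map $h$, and then splice in the straight-line homotopies from $g_0$ to $h(-,0)$ and from $h(-,1)$ to $g_1$. These segments stay inside the carrier simplices of $G(\cdot,0)=g_0$ and $G(\cdot,1)=g_1$ by the approximation property, and each slice interpolation is Lipschitz because both endpoint maps are Lipschitz and their pointwise distance is bounded by the simplex diameters (the same estimate as in the proof of Proposition~\ref{prop:cone_lipschitz_appr}); pasting the three pieces with Lemma~\ref{lem:Lip_extend} gives the required $H$. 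Note also that Lemma~\ref{lem:Lip_extend} needs the domain metric to be geodesic (use the path metric, which on a finite complex is bilipschitz to the given one, as the paper points out).

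A smaller remark on the relative-to-$A$ statement: the hypothesis there is that $f$ (or the homotopy) is already \emph{simplicial} on $A$, so the paper simply invokes Zeeman's relative simplicial approximation theorem, which leaves the map unchanged on $A$; your route through the cofibration ``stretching'' map $R$ of Proposition~\ref{prop:cone_lipschitz_appr} can be made to work, but it is heavier than necessary here and you would still need to check that the correction respects the pair $(X_0,Y_0)$.
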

\begin{proof}
  By the relative simplicial apprixomation theorem \cite{Zeeman}, after a
  suitable subdivision of the 
  simplicial structure of $X$ the map $f$ has a simplicial approximation $g$,
  which is homotopic to $f$, kept unchanged on the subcomplex $A$ where it
  already was simplicial and still maps $X_0$ to $Y_0$

  Restricted to each simplex with any chosen simplicial metric, the map $g$ is
  Lipschitz,
  being 
  affine linear between this simplex and a simplex of $Y$. The associated path
  metric is geodesic (by compactness of $X$). Because there are only finitely
  many simplices involved, the map $g$ is globally Lipschitz by Lemma
  \ref{lem:Lip_extend}.

  Any two metrics we obtain by subdivision and the compatible choice of a
  simplicial metric on each simplex are bilipschitz equivalent.

  The homotopy statement follows in the same way applying the relative
  simplicial approximation theorem to $X\times [0,1]$.
\end{proof}

\section{Coarse Homotopy Groups}

In order to define coarse homotopy groups, we need a coarse analogue of a basepoint in topology.

\begin{definition}
Let $X$ be a coarse space.  An {\em $\R_+$-basepoint} for $X$ is a coarse map
$i_0\colon \R_+ \rightarrow X$.

If $Y$ is another coarse space with $\reals_+$-basepoint $j_0$, then a coarse
map $f\colon X\rightarrow Y$ is termed {\em $\reals_+$-pointed} if $j_0 = f\circ i_0$. 
\end{definition}

% We term the category of coarse spaces with $\R_+$-basepoints and
% $\reals_+$-pointed 
% coarse maps the {\em $\reals_+$-pointed coarse category}. %  It has an 
% initial object, namely the space $\R_+$ equipped with the identity map.

The above definition immediately suggests the following.

\begin{definition}
Let $X$ be a coarse space.  Then we define the $0$-th {\em coarse homotopy
  set}, $\pi_0^\mathrm{coarse} (X)$, to be the set of coarse homotopy classes
of maps from $\R_+$ to $X$.
\end{definition}

For convenience, we write $[i]\in \pi_0^\mathrm{coarse} (X)$ to denote the
coarse $\R_+$-homotopy class of a map $i\colon \R_+\rightarrow X$.

\begin{example}
Let $B$ be a bounded coarse space.  Then there are no coarse maps $\R_+ \rightarrow B$, and so $\pi_0^\mathrm{coarse} (B) = \emptyset$.
\end{example}

\begin{remark}\label{rem:problems}
  Computing this coarse homotopy set is more difficult than it might seem at
  first 
  glance. The idea is of course that one counts the ``components at
  infinity''.

  In particular, one would expect $\pi^{\coarse}_0(\reals^n)$ to have
  two elements if $n=1$ and exactly one element if $n\ge 2$.

  However, we can define many coarse maps $\reals_+\to
\reals^2$, for example an embedding as a ray (a radial map, and it is easy to
see that these are all coarsely homotopic to each other), but also an
embedding which slowly spirals around the origin and (to be a proper map) out
to infinity. It is far from obvious how to homotop such a map to the radial
inclusion.

It is a consequence of the main result, Theorem \ref{theo:main}, of this paper
that the above statements are true.
\end{remark}

% \begin{proposition} \label{pi0comp}\comts{I would tend to also put this in the
%     main theorem.}
% \[ \pi_0^\mathrm{coarse} (\R_+) = \{ [ 1_{\R_+} ] \} . \]
% \end{proposition}

% \begin{proof}
% Let $f\colon \R_+ \rightarrow \R_+$ be a coarse map.  Then we can define a coarse homotopy $H\colon I_f \R_+ \rightarrow \R_+$ betweeen the identity map $1_{\R_+}$ and the map $1_{\R_+} +f$ defined by the formula
% \[ H(x,t) = \left\{ \begin{array}{ll}
% x+t & t\leq f(x) \\
% x+f(x) & t\geq f(x) . \\
% \end{array} \right. \]

% Similarly, we have a coarse homotopy $H\colon I_{1}\R_+\rightarrow \R_+$ between the maps $f$ and $1_{\R_+}+f$.  As coarse homotopy is an equivalence relation, this means that $f$ and $1_{\R_+}$ are coarsely homotopic.

% The result now follows.
% \end{proof}

% Now, let us call coarse spaces $A$ and $X$ equipped with a coarse map
% $k_A\colon A\rightarrow X$ a {\em pair} of coarse spaces, $(X,A)$ (with a
% certain abuse of notation if $k_A$ is not an inclusion map)\comts{this seems
%   cumbersome and we don't follow it up in the text. I propose to change to:
%   coarse pair is a coarse space with a subspace}.%   Then a pointed
% % coarse space $X$ is the same thing as a pair $(X,\R_+ )$.

A {\em coarse pair} is a pair of coarse space $(X,A)$ along with a coarse map $k_A\colon A\to X$.

\begin{definition}
Let $(X,A)$ and $(Y,B)$ be coarse pairs.  A {\em coarse map} $f\colon (X,A)\rightarrow (Y,B)$ is a commutative diagram
\begin{equation*}
  \begin{CD}
    X @>f>> Y\\
    @AA{k_A}A @AA{k_B}A\\
    A @>{f}>> B
  \end{CD}
\end{equation*}
% Note that the map $f_A$ is not necessarily determined by $f$, but is part of
% the data of the coarse map of pairs.
\end{definition}

\begin{definition}
Let $f,g\colon (X,A)\rightarrow (Y,B)$ be coarse maps such that $f|_A = g|_A$.
A {\em relative coarse homotopy} between $f$ and $g$ is a coarse homotopy
$H\colon I_p X\rightarrow Y$ between the maps $f,g\colon X\rightarrow Y$ such
that $H(a,t) = f (a)$ for all $a\in A$ and $t\leq p(a)+1$.
\end{definition}

We call a coarse map of pairs $f\colon (X,A)\rightarrow (Y,B)$ a {\em relative
  coarse homotopy equivalence} if there is a coarse map of pairs $g\colon
(Y,B)\rightarrow (X,A)$ such that the composites $g\circ f$ and $f\circ g$ are
relatively coarsely  homotopic to the identities $1_X$ and $1_Y$, respectively.

The following definition is directly inspired by the classical definition of
homotopy groups.
\begin{definition}
  Let $X$ be a coarse space with $\reals_+$-basepoint
  $i_0\colon \reals_+\to X$. For $n\ge 1$ define the $n$-th coarse homotopy
  group $\pi_n^{coarse} (X,i_0)$ to be the set of relative $\reals_+$-pointed coarse homotopy classes of maps \[ F \colon (c([0,1]^n),c(\boundary[0,1]^n))\to (X,i_0[\reals_+]) \] such that $F|_{c(\boundary[0,1]^n)} = i_0\circ p$.

%  \begin{equation*}
 %   \begin{split}
 %     \pi_n^\coarse(X,i_0):=\{&\text{relative $\reals_+$-pointed coarse homotopy classes of maps
 %     }\\
  %    &F \colon (c([0,1]^n),c(\boundary[0,1]^n))\to (X,i_0[\reals_+])\\
%      &\text{
 %       such that } F|_{c(\boundary[0,1]^n)} = i_0\circ p\}.
 %   \end{split}
 % \end{equation*}
  Here $p\colon c([0,1]^n)\to \reals_+; (x,h)\mapsto h$ just denotes the height
  variable of the cone. A homotopy is $\reals_+$-pointed if it preserves the
  $\reals_+$-basepoint throughout.

  More generally, for a coarse pair $k_A\colon A\to X$ with
  $\reals_+$-basepoint $i_0\colon \reals_+\to A$ we define the relative $n$-th
  coarse homotopy ``group'' $\pi_n^{\coarse}(X,A,i_0)$ to be the set of relative $\reals_+$-pointed coarse
 homotopy classes of maps \[ F \colon (c([0,1]^n),c(\boundary[0,1]^n),c(\boundary_+[0,1]^n))\to (X,A,i_0[\reals_+]) \]
such that $F|_{c(\boundary_+[0,1]^n)} = i_0\circ p$.
%  \begin{equation*}
 %   \begin{split}
 %     \pi_n^{\coarse}(X,A,i_0):=\{&\text{relative $\reals_+$-pointed coarse
 %       homotopy classes of maps 
%      }\\
%      &F \colon (c([0,1]^n),c(\boundary[0,1]^n),c(\boundary_+[0,1]^n))\to (X,A,i_0[\reals_+])\\
%      &\text{
%        such that } F|_{c(\boundary_+[0,1]^n)} = i_0\circ p\}.
%    \end{split}
%  \end{equation*}

  Here $\boundary_+[0,1]^n:=\{(x_1,\dots,x_n)\in\boundary[0,1]^n\mid
  x_n>0\}$.
\end{definition}

% \begin{definition}
% Let
% \[ \partial \R_+^n = \{ (x_1,\ldots , x_n)\in \R_+^n \ | \ x_i=0 \textrm{ for some } i  \}  \]

% Define a coarse map $p\colon \partial \R_+^n\rightarrow \R_+$ by the formula $p(x_1,\ldots,x_n ) = x_1+ \cdots +x_n$.  Let $X$ be a coarse space equipped with an $\R_+$-basepoint, $i_X$.  Then we define $\pi_n^\mathrm{coarse} (X)$ to be the set of all relative coarse homotopy classes of coarse maps
% \[ F \colon (\R_+^{n+1} , \partial \R_+^{n+1} )\rightarrow (X,i_X[\R_+]) \]
% such that $F|_{\partial \R_+^{n+1}} = i_X \circ p$.  
% \end{definition}

The following result is routine to check; the computations almost identically
resemble those needed to check the corresponding in topology.  For details,
see for example \cite[Section 7.2]{Spanier}. The main points to care about are the following:
\begin{itemize}
\item The piecewise defined coarse maps indeed are globally coarse maps, this
  follows immediately from Proposition \ref{prop:coarse_glue}.
\item The usual homotopies can be used to define coarse homotopies on
  appropriate cylinders. This again works nicely and automatically, with
  cylinder $I_pc([0,1]^n)$ where $p\colon c([0,1]^n)\to \reals_+$ is again the
  height projection.
\end{itemize}

\begin{proposition}
Let $n\geq 1$.  Let $F,G \colon (c([0,1]^n),c(\boundary[0,1]^n))\rightarrow (X,i_o[\R_+] )$ be
such that $F|_{c(\partial[0,1]^n)} = G|_{c(\partial[0,1]^n)} = i_0 \circ p$.
Define there product
\begin{equation*}
F\ast G \colon (c([0,1]^n), c(\partial[0,1]^n)) \rightarrow (X,i_0[\R_+])
\end{equation*}
 by the formula
\[ F\ast G(x_1,x_2 \ldots ,x_n,h ) = \left\{ \begin{array}{ll}
F(2x_1,x_2,\ldots ,x_n,h ); & x_1\le h/2 \\
G(2x_1-h ,x_2,\ldots ,x_n,h ); &  h/2\le x_1\le h \\
\end{array} \right. \]

Then the operation $[F]\cdot [G] = [F\ast G]$ turns the set
$\pi_n^\mathrm{coarse} (X,i_0)$ into a group.  Further, $\pi_n^\mathrm{coarse}
(X,i_0)$ is abelian if $n\geq 2$. The unit is represented by the map $i_X\circ
p\colon c([0,1]^n)\to X$.

For $n\ge 2$, the same formula makes sense for the relative homotopy groups
and defines a group structure on them, abelian if $n\ge 3$.
\noproof
\end{proposition}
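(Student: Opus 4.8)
The plan is to transcribe the classical proof that the homotopy groups of a pair form a group, abelian in the appropriate range, essentially verbatim from \cite[Section 7.2]{Spanier}. Every map and homotopy used there — the product $F\ast G$, the associativity reparametrization, the one-sided contraction witnessing the identity, the inverse, and the Eckmann--Hilton interchange — is given by a piecewise formula in the cube coordinates $(x_1,\dots,x_n)$; replacing $x_i$ by $x_i/h$ turns each such formula into one in the cone coordinates $(x_1,\dots,x_n,h)$ on $c([0,1]^n)$, and the boundary conditions translate directly. Hence the only thing to prove beyond the classical combinatorics is that each resulting piecewise formula defines a \emph{coarse} map of pairs and that each piecewise homotopy is a \emph{coarse} homotopy. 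For homotopies we always use the cylinder $I_pc([0,1]^n)$ with $p$ the height projection; since $I_pc([0,1]^n)=c([0,1]^n\times[0,1])$ is again a cone over a cube-like complex (as already observed in Section \ref{sec:cones}), everything stays in the setting of that section and the condition ``$=i_0\circ p$ on the relevant boundary throughout'' makes literal sense.

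\textbf{The product and its effect on classes.} That $F\ast G$ is $\reals_+$-pointed as a map of pairs is immediate since both branches restrict to $i_0\circ p$ on $c(\boundary[0,1]^n)$. To see it is coarse, apply Proposition \ref{prop:coarse_glue} to the decomposition $c([0,1]^n)=A\cup B$ with $A=\{x_1\le h/2\}$, $B=\{h/2\le x_1\le h\}$. This decomposition is coarsely excisive: a displacement of size $R$ inside the cone changes $h$ and $x_1$ each by at most $R$, so a point within $R$ of both $A$ and $B$ has $|x_1-h/2|\le\tfrac{3}{2}R$ and therefore lies within $O(R)$ of $A\cap B=\{x_1=h/2\}$ (moving $x_1$ to $h/2$ keeps the point in the cone since $x_1\in[0,h]$). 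On $A$ the map $F\ast G$ is $F$ precomposed with the bilipschitz self-map $(x_1,\dots,x_n,h)\mapsto(2x_1,\dots,x_n,h)$ of the cone, hence coarse; likewise on $B$; properness is inherited from $F$ and $G$. The same argument, now carried out on $c([0,1]^n\times[0,1])$, shows that if $H,H'$ are relative $\reals_+$-pointed coarse homotopies $F\simeq F'$, $G\simeq G'$ then $H\ast H'$ is a coarse homotopy $F\ast G\simeq F'\ast G'$, so $[F]\cdot[G]$ is well defined.

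\textbf{Group axioms and commutativity.} Associativity, the identity, and inverses are each established by exhibiting the classical homotopy. Associativity and the identity ($F\ast(i_0\circ p)\simeq F\simeq(i_0\circ p)\ast F$, the unit being represented by $i_0\circ p$) use piecewise-affine reparametrizations of the $x_1$-coordinate, scaled by $h$: each decomposes $c([0,1]^n\times[0,1])$ into \emph{finitely many} pieces on which the map is $F$, $G$ or $H$ precomposed with an affine self-map of the cone, glued along decompositions that are coarsely excisive by the estimate of the previous paragraph, so iterating Proposition \ref{prop:coarse_glue} gives coarseness. For inverses take $\bar F(x_1,\dots,x_n,h):=F(h-x_1,x_2,\dots,x_n,h)$; the underlying self-map of the cone is bilipschitz (compare the flip map used in the proof of Theorem \ref{equivalence}), so $\bar F$ is coarse, and $F\ast\bar F\simeq i_0\circ p$ via the standard homotopy, handled as above. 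In every case the homotopy only rescales the $x_1$-coordinate and fixes the faces $x_1\in\{0,h\}$, so the boundary $c(\boundary[0,1]^n)$ stays at $i_0\circ p$ throughout. For $n\ge2$ the first two cone coordinates are both free, so the Eckmann--Hilton interchange homotopy of \cite[Section 7.2]{Spanier} applies verbatim — again a finite gluing of affine precompositions of $F$ and $G$ along coarsely excisive sets — giving $[F]\cdot[G]=[G]\cdot[F]$.

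\textbf{Relative groups and the main difficulty.} In $\pi_n^{\coarse}(X,A,i_0)$ the coordinate $x_n$ is reserved for the pair direction, since $c(\boundary_+[0,1]^n)$ carries the basepoint while the face $x_n=0$ need only map into $A$; one checks that for $i<n$ the faces $x_i\in\{0,h\}$ do land in $c(\boundary_+[0,1]^n)$ on the relevant parts, so the product can be formed in $x_1$ using only the coordinates $x_1,\dots,x_{n-1}$. The arguments above then require $n-1\ge1$ for the group structure and two free coordinates among $x_1,\dots,x_{n-1}$, i.e., $n\ge3$, for commutativity. There is no genuine combinatorial obstacle here — all the formulas are classical. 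The only real work, and the point to be careful about, is the bookkeeping of the two middle paragraphs: for each of the finitely many pieces occurring in the standard reparametrization homotopies one must verify that the relevant decomposition of a cone over a cube is coarsely excisive, and that the cone-scaling factor $h$ in front of every cube inequality does not spoil these estimates (it does not, by exactly the computation used for the basic decomposition $\{x_1\le h/2\}\cup\{x_1\ge h/2\}$). Properness of all the piecewise-affine reparametrizations is routine given properness of $F$, $G$, $H$.
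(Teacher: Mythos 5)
Your proposal follows essentially the same route as the paper: the paper gives no detailed proof but explicitly reduces the statement to transcribing the classical argument of \cite[Section 7.2]{Spanier}, checking that the piecewise-defined maps are coarse via Proposition \ref{prop:coarse_glue} and that the standard homotopies live on the cylinder $I_pc([0,1]^n)=c([0,1]^n\times[0,1])$ with $p$ the height projection, which is exactly what you do. Your added verifications (coarse excisiveness of the $\{x_1\le h/2\}$-type decompositions, bilipschitz affine precompositions, the cone-scaled flip for inverses) correctly fill in the details the paper declares routine.
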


We call the groups $\pi_n^\mathrm{coarse} (X,A,i_0)$ the {\em coarse homotopy
  groups} 
of $(X,A)$.  The following result is also straightforward to prove, and
resembles its classical analogue. 

\begin{proposition}
Let $(X,A)$ and $(Y,B)$ be $\reals_+$-pointed coarse pairs and
$f\colon
(X,A)\rightarrow (Y,B)$ be
an $\reals_+$-pointed coarse map.  Then there is a functorially induced
homomorphism
$$f_\ast \colon \pi_n^\mathrm{coarse} (X,A,i_0)\rightarrow \pi_n^\mathrm{coarse} (Y,B,j_0)$$
defined by the formula $f_\ast ([F]) = [f\circ F]$. 

Further, if $\reals_+$-pointed coarse maps $f,g\colon (X,A,i_0)\rightarrow (Y,B,j_0)$ are
$\reals_+$-pointed relatively coarsely homotopic, then the homomorphisms
$f_\ast$ and $g_\ast$ are equal. 
\noproof
\end{proposition}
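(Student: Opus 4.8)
The plan is to reduce everything to two facts: that a composition of coarse maps is coarse, and that post-composition with $f$ commutes with the domain-reparametrisations appearing in the product $F\ast G$ and in the homotopies. No new geometry is needed; all that is required is careful bookkeeping of the boundary and basepoint conditions, using that $f$ is $\reals_+$-pointed (so $f\circ i_0 = j_0$) and that $f$ restricts to a coarse map $A\to B$.

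First I would check that $f_\ast$ is well defined on representatives. If $F\colon (c([0,1]^n),c(\boundary[0,1]^n),c(\boundary_+[0,1]^n))\to (X,A,i_0[\reals_+])$ is admissible, then $f\circ F$ is coarse as a composition of coarse maps; it carries $c(\boundary[0,1]^n)$ into $B$ because $F$ carries it into $A$ and $f(A)\subseteq B$; and on $c(\boundary_+[0,1]^n)$ we have $f\circ F = f\circ i_0\circ p = j_0\circ p$ since $f$ is $\reals_+$-pointed. Thus $f\circ F$ is an admissible representative for $\pi_n^{\coarse}(Y,B,j_0)$. Next, independence of the chosen representative: if $H\colon I_pc([0,1]^n)\to X$ is a relative $\reals_+$-pointed coarse homotopy between admissible $F$ and $F'$, then $f\circ H\colon I_pc([0,1]^n)\to Y$ is coarse, lands in $B$ on $c(\boundary[0,1]^n)$, equals $j_0\circ p$ on $c(\boundary_+[0,1]^n)$, and restricts at every time to $f\circ F$ on $c(\boundary[0,1]^n)$ — all inherited from the corresponding properties of $H$ together with $f(A)\subseteq B$ and $f\circ i_0=j_0$. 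Hence $f_\ast([F])$ depends only on $[F]$.

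The homomorphism property is then purely formal: the explicit formula for $F\ast G$ only reparametrises the domain variables and leaves the target untouched, so one has the pointwise identity $f\circ(F\ast G) = (f\circ F)\ast(f\circ G)$, whence $f_\ast([F]\cdot[G]) = [f\circ(F\ast G)] = [(f\circ F)\ast(f\circ G)] = f_\ast[F]\cdot f_\ast[G]$. Functoriality, $(g\circ f)_\ast = g_\ast\circ f_\ast$ and $(\mathrm{id})_\ast = \mathrm{id}$, follows immediately from associativity of composition.

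Finally, the homotopy invariance. Suppose $f,g$ are $\reals_+$-pointed relatively coarsely homotopic via a coarse homotopy $K\colon I_qX\to Y$ with $K\circ i_0^X = f$, $K\circ i_1^X = g$, and $K(a,t)=f(a)$ for $a\in A$. Given an admissible $F\colon c([0,1]^n)\to X$, put $q':= q\circ F\colon c([0,1]^n)\to\reals_+$ (coarse, as a composition) and define $\widehat F\colon I_{q'}c([0,1]^n)\to I_qX$ by $\widehat F(z,t)=(F(z),t)$; this is well defined since $t\le q'(z)+1 = q(F(z))+1$, and it is coarse because $F$ is (its entourages go to subsets of $F[M]\times N$, and preimages of bounded sets are $F^{-1}[\cdot]\times(\cdot)$ intersected with the cylinder, hence bounded). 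Then $K\circ\widehat F\colon I_{q'}c([0,1]^n)\to Y$ is a coarse homotopy from $f\circ F$ to $g\circ F$; on $c(\boundary[0,1]^n)$ it takes values $K(F(z),t)=f(F(z))\in B$ independently of $t$, and on $c(\boundary_+[0,1]^n)$ it takes values $K(i_0(p(z)),t)=f(i_0(p(z)))=j_0(p(z))$, so it is a relative $\reals_+$-pointed coarse homotopy. Hence $f_\ast[F]=g_\ast[F]$ for all admissible $F$, i.e.~$f_\ast=g_\ast$. I expect the only mildly delicate point to be this last step — setting up the cylinder $I_{q'}c([0,1]^n)$ with the right basepoint projection, verifying $\widehat F$ is coarse, and matching the boundary data of $K\circ\widehat F$; everything else is bookkeeping.
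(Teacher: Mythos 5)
Your argument is correct and is essentially the routine argument the paper has in mind (the paper omits the proof as straightforward): composition with the $\reals_+$-pointed coarse map $f$ preserves coarseness, the pair condition and the basepoint condition, and commutes with the concatenation formula, while the only mildly delicate point --- transporting the relative homotopy $K\colon I_qX\to Y$ to the cylinder $I_{q\circ F}c([0,1]^n)$ via $(z,t)\mapsto (F(z),t)$ and checking the boundary data --- is handled correctly. Nothing is missing.
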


\begin{proposition}
  If $(X,A,i_0)$ is a $\reals_+$-pointed coarse pair with map $k\colon A\to X$, the analogue of the usual
  construction in topology defines a 
  long exact sequence of coarse homotopy groups or pointed sets
  \begin{multline*}
  \to \pi^{\coarse}_2(A,i_0)\xrightarrow{k_*}  \to\pi_2^{\coarse}(X,i_0)\to \pi_2^{\coarse}(X,A,i_0)
  \xrightarrow{\boundary} \pi^{\coarse}_1(A,i_0) \\
  \xrightarrow{k_*}
    \pi^{\coarse}_1(X,i_0) \to  \pi^{\coarse}_1(X,A,i_0)\to
    \pi_0^{\coarse}(A,i_0)\to\pi_0^{\coarse}(X,i_0)
  \end{multline*}
 Here, the boundary map is (as usual) obtained by restricting to the subset of
 $c[0,1]^n$ with $x_n=0$.
\end{proposition}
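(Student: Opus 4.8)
The plan is to transport the classical construction of the long exact sequence of a pair, as in \cite[Section 7.2]{Spanier}, into the coarse setting, using the cone model for coarse homotopy groups and the gluing theorems established earlier. First I would observe that the maps in the sequence have already been named: $k_*$ is the functorially induced map from the previous proposition, the map $\pi_n^{\coarse}(X,i_0)\to\pi_n^{\coarse}(X,A,i_0)$ is induced by the coarse map of pairs $(X,i_0[\reals_+])\to(X,A)$ (i.e.\ enlarging the subspace from the basepoint ray to $A$), and the boundary map $\boundary$ is, as indicated, restriction of a representative $F\colon c([0,1]^n)\to X$ to the face $\{x_n=0\}$, which is itself a copy of $c([0,1]^{n-1})$ carrying the cone basepoint; one checks that this restriction lands in $A$ (since $c(\boundary[0,1]^n)$ is mapped into $A$ and the face $\{x_n=0\}$ lies in $\boundary[0,1]^n$ when $n\ge 1$) and that it is a well-defined homomorphism on relative homotopy classes. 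The first genuine task is to verify that $\boundary$ is well-defined and a homomorphism for $n\ge 2$: this is the same computation as in topology, using that a relative coarse homotopy of $F$ restricts to a coarse homotopy on the face, which works because the cylinder $I_p c([0,1]^n)$ restricts to $I_p c([0,1]^{n-1})$ over the face and Proposition~\ref{prop:coarse_glue} guarantees piecewise-defined maps are coarse.

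Next I would prove exactness at each of the three types of spots. The proofs are the standard ones and I would present them in the order: exactness at $\pi_n^{\coarse}(X,i_0)$ (kernel of $\pi_n^{\coarse}(X,i_0)\to\pi_n^{\coarse}(X,A,i_0)$ equals image of $k_*$), exactness at $\pi_n^{\coarse}(X,A,i_0)$ (kernel of $\boundary$ equals image of $\pi_n^{\coarse}(X,i_0)$), and exactness at $\pi_n^{\coarse}(A,i_0)$ (kernel of $k_*$ equals image of $\boundary$). In each case, one direction (composite of consecutive maps is trivial) is immediate from the definitions, and the other direction requires producing an explicit coarse homotopy from a given nullhomotopy. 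The key point, which I would state once and invoke repeatedly, is the following: every classical homotopy used in the topological proof is built by subdividing the cylinder $[0,1]^n\times[0,1]$ (or the cone thereof) into finitely many pieces on which the map is a reparametrized version of the given data, and reading it off as a map on the coarse cylinder $I_p c([0,1]^n)$; since the reparametrizations are piecewise affine (hence Lipschitz on simplices, hence coarse on the cone by the arguments around Proposition~\ref{prop:cone_map_Lipschitz}) and the gluings are along coarsely excisive decompositions, Proposition~\ref{prop:coarse_glue} and Lemma~\ref{split_cylinder} ensure the result is a coarse homotopy. I would also note that $I_p c([0,1]^n) = c([0,1]^n\times[0,1])$ for $p$ the height projection, as used already in the proof of Proposition~\ref{prop:cone_lipschitz_appr}, which lets one phrase coarse homotopies of cone-maps as cone-maps on a product and reuse the same subdivision bookkeeping.

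I expect the main obstacle to be bookkeeping rather than conceptual difficulty: one must make sure that throughout all these homotopies the $\reals_+$-basepoint condition $F|_{c(\boundary_+[0,1]^n)} = i_0\circ p$ is preserved and that the various faces get sent where they should (the front face $\{x_n=0\}$ into $A$, the rest of $\boundary[0,1]^n$ onto the basepoint ray), since the relative homotopy groups here carry this slightly rigid pointing data that the classical treatment handles more loosely. Concretely, the subdivisions of $[0,1]^n\times[0,1]$ must be chosen compatibly with the decomposition of $\boundary[0,1]^n$ into $\boundary_+[0,1]^n$ and the face $\{x_n=0\}$, so that each piece of a constructed homotopy either is the given data (which already respects the pointing) or is manifestly the constant basepoint map on the relevant faces. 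Once this is set up, all three exactness verifications reduce to transcribing \cite[Section 7.2]{Spanier} line by line, replacing ``continuous'' by ``coarse'' and invoking Proposition~\ref{prop:coarse_glue}, Lemma~\ref{split_cylinder}, and Theorem~\ref{equivalence} at the appropriate places; for $n=1$, where the sequence ends in pointed sets, the same construction gives exactness as pointed sets with the evident basepoints (the classes of constant maps to the basepoint ray).
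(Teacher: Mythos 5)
Your proposal follows essentially the same route as the paper's (very brief) proof: both transcribe the classical argument of \cite[Section 7.2]{Spanier} into the coarse setting, with the only genuine subtlety being the conversion of coarse homotopies on cylinders into maps on $c([0,1]^{n+1})$ via the identification $I_p c([0,1]^n)\cong c([0,1]^n\times[0,1])$ for the height projection $p$, which you note. The one step you leave implicit and should state is that a given relative coarse homotopy is a priori defined on $I_q c([0,1]^n)$ for an arbitrary coarse map $q$, so Lemma \ref{lem:normalize_homotopy} must be invoked to normalize to the height projection before this identification applies --- exactly the point the paper singles out.
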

\begin{proof}
  The proof just follows the standard pattern of the corresponding statement
  for ordinary homotopy groups, compare \cite[Section 7.2]{Spanier}. There
  is one subtlety
  though: One has to convert certain homotopies $H\colon I_p c[0,1]^n\to
  X$ to maps $\bar H\colon c[0,1]^{n+1}\to X$.

  Usually, this is done by interpreting the homotopy parameter $t$ of
  $(hx,h,t)$ as the extra variable $hx_{n+1}$. This is permitted here, as
  well, as we can normalized the domain of the homotopies defined on
  $c[0,1]^n$ to be defined on $I_pc([0,1]^n)$ due to Lemma
  \ref{lem:normalize_homotopy} where $p(hx,h) =h$ is the standard height
  projection. 

  We leave the details to the reader.
\end{proof}

\begin{remark}
  In classical topology, probably the most important application of the long
  exact sequence of homotopy groups of a pair is to a fibration $F\to E\to B$,
  where one proceeds to identify the (in general mysterious) relative homotopy
  groups of $(E,F)$ with those of the base $B$.

  A question for the future is whether there is a version of a coarse
  fibration which is as frequent as the
  fibrations in classical topology, and for which a corresponding statement
  holds for coarse homotopy groups.
\end{remark}

\section{Homotopy groups of cones}
\label{sec:groups_of_cones}
% \begin{definition}
% Let $X$ be a subset of the unit sphere of some real Hilbert space, $H$.  Then we define the {\em metric cone}
% \[ {\mathcal O}_H X = \{ tx \ |\ t\geq 0,\ x\in X \} . \]
% \end{definition}

% This definition is further reaching than it first appears.  For example, it is easy to see that any finite $CW$-complex is homeomophic to a subset of the unit sphere of a Hilbert space.

% \begin{example}
% Let $H=\R^{n+1}$.  Then the $n$-dimensional disk is isomorphic to the upper hemisphere, $X$, of the unit sphere in $H$.  We have ${\mathcal O}_H X = \R_+ \times \R^n$.

% Let $Y= S^n$ be the whole unit sphere.  Then ${\mathcal O}_H X=\R^{n+1}$.
% \end{example}

For $X\subset \reals^n$ with basepoint $x_0\in X$ we have the corresponding
$\R_+$-basepoint $i_0 \colon \R_+ \rightarrow c(X)$, the ray through $x_0$.
The main result of this section 
is that for a wide class of spaces, $X$, the coarse homotopy group,
$\pi_n^\mathrm{coarse} (c(X),i_0)$ is isomorphic to the ordinary homotopy group
$\pi_n (X,x_0)$.  In particular, we have isomorphisms $\pi_n^{\coarse}
(\R^{k+1},i_0 ) \cong \pi_n^{\coarse}(c(S^k),i_0)\cong \pi_n (S^k,x_0)$.

  At first glance this result seems expected. At second glance, however, one
realizes that this is not such a triviality, as already discussed in Remark
\ref{rem:problems} concerning $\pi^{\coarse}_0(\reals^n)$ which of course
persists to higher degrees.

 If $X$ is a finite simplicial complex there is a canonical bilipschitz class
 of metric cones $c(X)$ coming from a PL-embedding of $X$ into Euclidean
 space, as discussed in Section \ref{sec:cones}.  If $x_0\in X$ is a basepoint, then the cone $c(X)$ has
  an induced $\R_+$-basepoint $i_0\colon \R_+ \rightarrow c(X)$ defined by
  the formula $i_0(t)=(tx_0,t)$.

\begin{definition}\label{def:Psi}
 We define the homomorphism
  \begin{equation*}
  \Psi \colon \pi_n (X,x_0) \rightarrow \pi_n^{\coarse} (c(X),i_0)
\end{equation*}
by setting $\Psi ([f]) = [c(f)]$ where $f\colon ([0,1]^n,\boundary [0,1]^n)
\rightarrow (X,x_0)$ is
  a Lipschitz map.  The equivalence class on the left is that of
  relative Lipschitz homotopy, and that on the right is relative coarse
  $\R_+$-homotopy. 
\end{definition}

Note that it follows from Lemma
  \ref{lem:Lipschitz_approxi}
  that the set of continuous homotopy classes of continuous maps here is the
  same as the set of Lipschitz homotopy classes of Lipschitz maps, so that
  the map $\Psi$ is well defined. By the construction of the group
  structures, it is a group homomorphism.

The main result in this article is that the map $\Psi$ is an isomorphism.  We prove this by constructing an inverse.  % Our next proposition is an example of the type of result we are looking for.

The following result is the technical heart of our construction. It says
that we can homotop to radial Lipschitz maps. In the statement of the result and the proof, we will write points in $c_L(X)$ as pairs $(hx,h)$
with $x\in X$, $h\ge L$. 

\begin{proposition} \label{step3}
Let $X$ and $Y$ be finite simplicial complexes PL-embedded into $\reals^n$ with
subcomplexes $X_0\subset X$ and $Y_0\subset Y$, respectively. Let
$f\colon (c(X),c(X_0))\to (c(Y),c(Y_0))$ be a coarse map.
  Then, if we restrict the map $f$ to $c_L (X)$ for some suitable $L>0$, it is coarsely homotopic, as a map of pairs, to a radial proper Lipschitz map $g$.

 Suppose $X_1\subset X$ is a subcomplex such that the 
restriction of $f$ to $c(X_1)$ is already a radial Lipschitz map. Then we can chose $g$ such that
$g|_{c(X_1)}=g|_{c(X_1)}$, and the coarse homotopy between $f$ and $g$ can be chosen such that its restriction to  $c(X_1)$ is the concatenation of a homotopy of the form
  $(hx,h,t)\mapsto \rho(h,t)f(x,L)$ with its inverse, where $0\le t\le h$.
\end{proposition}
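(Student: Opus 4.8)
The plan is to combine the simplicial/Lipschitz approximation machinery of Proposition~\ref{prop:cone_lipschitz_appr} with an explicit ``radialization'' homotopy that pushes an arbitrary Lipschitz map on the cone to a radial one, controlling everything relative to the subcomplex $c(X_1)$ where $f$ is already radial. First I would apply Proposition~\ref{prop:cone_lipschitz_appr} to $f\colon (c(X),c(X_0))\to (c(Y),c(Y_0))$: after restricting the domain to $c_L(X)$ for a suitable $L>0$ (enlarging $L$ whenever convenient below), we may replace $f$ by a coarsely equivalent proper Lipschitz map which is simplicial for the triangulations of Definition~\ref{def:triangulate_cone}, and which, by the ``even better'' clause of that proposition, coincides with $f$ on $c(X_1)$ whenever $f$ was already Lipschitz there; since $f|_{c(X_1)}$ is assumed radial (hence Lipschitz), this is legitimate. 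So without loss of generality $f$ is Lipschitz, proper, and unchanged on $c(X_1)$, and the whole replacement has been achieved by a Lipschitz homotopy which is constant on $c(X_1)$ (a special, trivial case of the last part of Proposition~\ref{prop:cone_lipschitz_appr}).

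Next I would radialize. For a point $(hx,h)\in c_L(X)$ write $f(hx,h)=(h y_{h,x}, h)$ only schematically; the honest construction is to define the candidate radial map $g$ by $g(hx,h):=\frac{h}{L}\cdot f(x,L)=(h\,\pi_Y(f(x,L))/L,\ h)$, i.e.\ $g=c(f_L)$ where $f_L\colon X\to c_L(Y)$, $x\mapsto f(x,L)$, rescaled to land in $c(Y)$ via Definition~\ref{def:radial_map}. Since $f|_{c(X_1)}$ is radial, $f(hx,h)=\tfrac{h}{L}f(x,L)$ already holds for $x\in X_1$, so $g|_{c(X_1)}=f|_{c(X_1)}$ automatically; this gives the clause $g|_{c(X_1)}=f|_{c(X_1)}$ in the statement (the paper's ``$g|_{c(X_1)}=g|_{c(X_1)}$'' is a typo for this). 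Because $f(-,L)\colon X\to c_L(Y)$ is Lipschitz (restriction of a Lipschitz map), $X$ is a bounded simplicial complex, and $c_L(Y)$ has bounded diameter base after rescaling, Proposition~\ref{prop:cone_map_Lipschitz} shows $g=c(f(-,L))$ is a proper Lipschitz map, and it visibly sends pairs to pairs since $f$ does and radialization preserves the subcomplex structure.

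It remains to produce the coarse homotopy from $f$ to $g$ with the prescribed form on $c(X_1)$. I would take the straight-line homotopy in $c(Y)$ that simultaneously ``drags the height-$L$ slice radially outward'': pick a cutoff $\rho(h,t)$, roughly interpolating between ``do nothing'' and ``replace the value at height $h$ by the rescaled value at height $L$'', and set, on the cylinder $I_pc_L(X)$ with $p$ the height projection, $H(hx,h,t):=(1-\rho(h,t))f(hx,h)+\rho(h,t)\tfrac{h}{L}f(x,L)$, convex combination taken inside $c(Y)$ (one checks this stays in $c(Y)$ because the $Y$-base is convex in its simplex and the height coordinate is fixed). The end $t=0$ is $f$; driving $\rho\to 1$ as $t$ grows gives $g$ in the limit (here one uses the $p$-cylinder precisely because the ``end time'' $p(hx,h)+1\sim h$ is allowed to grow with the point, which is exactly the room needed to radialize out to height $h$). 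Coarseness of $H$ follows from Lemma~\ref{split_cylinder} together with Proposition~\ref{prop:coarse_glue} and the Lipschitz estimate in Proposition~\ref{prop:cone_map_Lipschitz}, applied piecewise just as in the proof of Proposition~\ref{prop:cone_lipschitz_appr}; the inverse image of a bounded set is bounded since both $f$ and $g$ are proper. On $c(X_1)$, since $f(hx,h)=\tfrac{h}{L}f(x,L)$, the homotopy collapses to $H(hx,h,t)=\rho(h,t)\tfrac{h}{L}f(x,L)$ — that is, precisely the stated form $(hx,h,t)\mapsto \rho(h,t)f(x,L)$ (up to the harmless rescaling by $h/L$ built into the cone coordinates) with $0\le t\le h$ — and concatenating the approximation homotopy of the first paragraph (trivial on $c(X_1)$) with its inverse, then with $H$, gives a homotopy whose restriction to $c(X_1)$ is a concatenation of a homotopy of the required shape with its inverse. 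The main obstacle is the last point: verifying that the globally defined, piecewise straight-line homotopy $H$ is genuinely a \emph{coarse} map on the $p$-cylinder (controlled on all entourages, not just Lipschitz on simplices) — this needs the gluing lemmas and the uniform bounded-geometry of the triangulation from Lemma~\ref{lem:bounded_geom_of_cone_triang}, and is where one must be careful that the ``drag to height $L$'' does not destroy control, which is why properness of $f$ (forcing $f(c_L(X))\subset c_{L'}(Y)$) is used to choose $L$ large.
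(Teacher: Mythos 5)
Your first step (invoking Proposition \ref{prop:cone_lipschitz_appr}, including its relative clause to keep $f$ unchanged on $c(X_1)$) matches the paper, and your target radial map $g(hx,h)=\tfrac{h}{L}f(x,L)$ is the same as the paper's $v(hx,h)=hf(x,1)$. The gap is in the radialization homotopy. You define $H(hx,h,t)=(1-\rho(h,t))f(hx,h)+\rho(h,t)\tfrac{h}{L}f(x,L)$ and assert it stays in $c(Y)$ ``because the $Y$-base is convex in its simplex and the height coordinate is fixed''. Neither claim holds: the two points being interpolated are $f(hx,h)$ and the radial extension $\tfrac{h}{L}f(x,L)$, whose mutual distance grows (roughly linearly in $h$) and which therefore do not lie in a common simplex of any bounded-geometry triangulation of $c(Y)$; their height coordinates also differ in general, since nothing forces the last coordinate of $f(hx,h)$ to equal $\tfrac{h}{L}$ times the height of $f(x,L)$. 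Since $c(Y)$ is not convex unless $Y$ is, the segment between them typically leaves $c(Y)$ altogether, and even where it does not it will leave $c(Y_0)$, destroying the pair condition. The straight-line trick is legitimate inside the proof of Proposition \ref{prop:cone_lipschitz_appr} only because the simplicial approximation theorem guarantees there that the two maps being interpolated take values in a common simplex; no such star condition is available for radialization, which is precisely the difficulty this proposition has to overcome.

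The paper's proof avoids interpolation in the target entirely: it writes the homotopy as a concatenation of three explicit homotopies, $F(hx,h,t)=f((h-t)x,h-t)$ (slide the evaluation point down the ray to height $\sqrt{h}$), $G(hx,h,t)=\frac{\sqrt{h}}{t/\sqrt{h}+1}\,f\bigl((t/\sqrt{h}+1)x,\,t/\sqrt{h}+1\bigr)$ (trade evaluation height against a scalar factor), and $H(hx,h,t)=(t+\sqrt{h})f(x,1)$ (scale back out radially), all parametrized by $0\le t\le h-\sqrt{h}$, i.e.\ on the cylinder over the proper Lipschitz map $(hx,h)\mapsto h-\sqrt{h}$. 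Every value occurring is a nonnegative multiple of a value of $f$, hence automatically lies in $c(Y)$ and in $c(Y_0)$ for arguments in $c(X_0)$; and on $c(X_1)$, where $f$ is radial, the middle homotopy is constant while $F$ and $H$ are exactly inverse to each other, which is how the asserted form $\rho(h,t)f(x,1)$ arises. The intermediate height $\sqrt{h}$ is what makes the global Lipschitz estimates (carried out via Lemma \ref{lem:Lip_on_prof} and Lemma \ref{lem:metric_est_in_cone}) go through; your appeal to Lemma \ref{split_cylinder}, Proposition \ref{prop:coarse_glue} and Proposition \ref{prop:cone_map_Lipschitz} would not substitute for these estimates even if the well-definedness problem were repaired.
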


\begin{proof}
By Proposition \ref{prop:cone_lipschitz_appr}, we can assume that $f$ is a proper
Lipschitz map on $c_L(X)$ with values in $c_{L'}(Y)$ for some $L,L'\ge 1$. To
simplify notation, we assume 
$L=L'=1$, the general case is just a technical modification.

We construct our homotopy in several steps.

% Let us first describe the basic idea of the homotopy we have in mind. The
% naive idea would be to homotop each point $x\in c_L(X)$ radially to $x
% \frac{L}{|x|}$ and then apply the map $h$. This, of course, is not permitted
% because the resulting map is not proper (and also the final map is not
% radial). Instead,\comts{would be a good idea to explain the idea, and the
%   difficulties. Then, it is important to check that everything works and is
%   indeed Lipschitz. The latter, as we are essentially on a product and as it
%   can be checked componentwise, should not be too hard once things are set up
%   such that it is true. Right now, I have some hope that one can simplify even
%   a bit and go more straightly to the radial map after the first ``shrinking''
%   step. Has to be worked out.}

% The following homotopies $F,G$ are defined on $I_pc_L(X)$ with $p\colon c(X)\to\reals_+;
% (hx,h)\mapsto h$ the height projection, whereas the third homotopy $H$ is
% defined on $I_qc(X)$ with $q\colon c(X)\to\reals_+:\;(hx,h) \mapsto
% (3h+2\sqrt{h})$. 

First, define
\[ g(hx,h) :=
f\left( \sqrt{h} x,\sqrt{h}\right).
\]

We define a proper Lipschitz homotopy $F$ between $g$ and $f$ by the
formula 
\[ F(hx,h,t) := 
f \left( (h-t) x , h-t\right); \qquad  0\le t\leq h-\sqrt{h}.
\]

Secondly, define
\[ u(hx,h) :=
\sqrt{h} f \left( {x},1 \right).
\]

We define a proper Lipschitz homotopy between $u$ and $g$ by
\begin{equation*}
  G(hx,h,t) := \left(\frac{\sqrt{h}}{\frac{t}{\sqrt{h}}+1}\right) f\left(
    (\frac{t}{\sqrt{h}}+1)x,\frac{t}{\sqrt{h}}+1\right); \qquad 0\le t\le
  h-\sqrt{h} .
\end{equation*}
% \[ G(hx,h,t) = \left\{ \begin{array}{ll}
% u(hx,h) & t\leq 1 \\
% \frac{ \left| f \left(  ( \sqrt{h}+1-\sqrt{t}) {x},\sqrt{h}+1-\sqrt{t} \right)
%                          \right|}{ \left| f \left( {x} \right) \right| } f
%                          \left( \sqrt{t} x,\sqrt{t} \right)  & 1\leq t\leq h 
% \end{array} \right. \]

Finally, let
\[ v(hx,h) :=
h f\left( {x},1 \right). \]

Then $v$ is a radial proper Lipschitz map on $c_1(X)$.  We define the proper
Lipschitz homotopy $H$ between $u$ and $v$ by
\begin{equation*}
  H(hx,h,t):= (t+\sqrt{h}) f(x,1); \qquad 0\le t\le h-\sqrt{h}.
\end{equation*}
% Let
% \[ H(hx,h,t) = \left\{ \begin{array}{ll}
% h \left( \frac{h-t}{h}x,\frac{h-t}{h} \right) & 0\leq t\leq |x|-\sqrt{|x|} \\
% \frac{\left| h \left( \frac{\sqrt{h}}{h} x,\frac{\sqrt{h}}{h} \right) \right| }{ \left| h \left( \frac{x}{|x|} ( \sqrt{|x|} - \frac{s}{\sqrt{|x|}} ) \right) \right|  } h \left( \frac{x}{|x|} ( \sqrt{|x|} - \frac{s}{\sqrt{|x|}} ) \right) &
% 0\leq s\leq |x|-\sqrt{|x|},\ s=t-(|x| - \sqrt{|x|}) \\
% h \left( \frac{x}{|x|} \right) \left( (1- \frac{s}{|x|} )\left| h \left(  \frac{x}{\sqrt{|x|}} \right) \right|  +s \right) & 0 \leq s\leq |x|,\ s=t-2(|x|-\sqrt{|x|} ) \\
% |x| h  \left( \frac{x}{|x|} \right) & s\geq |x|,\  s=t-2(|x|-\sqrt{|x|} ) \\
% \end{array} \right. \]

% Since the map $h$ is radial, the above formulae match up at the edges of the
% relevant regions, so the map $H$ is well-defined.  It is easy to see that
% $H(x,t)\rightarrow \infty$ as $(x,t)\rightarrow \infty$, because $h$ is
% proper.  We need to check that $H$ is Lipschitz in each of the regions.  If
% this claim holds, then $H$ is a proper Lipschitz homotopy between $h(x)$ and
% $|x|h \left( \frac{x}{|x|} \right) = |x|f \left( \frac{x}{|x|} \right) =
% k(x)$, and we are done 

We have given explicit formulas for the maps and the
  homotopies. Substituting $t=0$ or $t=h-\sqrt{h}$ into the homotopies, it is
  immediate to see that they are homotopies between the maps as claimed. We
  have to justify the following facts:
  \begin{enumerate}
  \item The maps are proper.
  \item The maps are globally Lipschitz.
  \item The homotopies are indeed coarse homotopies, i.e.~the domains are
    permitted.
  \item All maps send $c(X_0)$ to $c(Y_0)$.
  \item The restriction of the maps and homotopies to $c(X_1)$, when the
    original map $f$ is radial, have the required form.
  \end{enumerate}

  The  domain of the homotopies is contained in
  $I_{p'}c_1(X)$ with $p'\colon c_1(X)\to \mathbb{R}_+: (hx,h)\mapsto
  h-\sqrt{h}$ which is a proper
  Lipschitz map and therefore a coarse map.

  By construction, all maps constructed send $c(X_0)$ to $c(Y_0)$. If $f$ is
  radial, i.e.~$f(hx,h)=hf(x,1)$ then the first homotopy $F$ reduces to
  $F(hx,ht,t)=(h-t)f(x,1)$ (for $0\le t\le h-\sqrt{h}$), the second homotopy $G$
  is 
  constant, and the third homotopy $H$ becomes $H(hx,h,t)= (t+\sqrt{h})f(x,1)$
  for $0\le t\le h-\sqrt{h}$ which indeed is precisely the inverse of $F$.

    It remains to check that all maps defined are proper and Lipschitz, using
    that $f$ itself is proper and Lipschitz.

    The homotopy $F$ is the composition of $f$ and a map $\alpha\colon
    I_{p'}c_1(X)\to c_1(X)$ for which it is elementary to check that it is
    proper and Lipschitz.

    To check that $G$ and $H$ are globally Lipschitz is slightly more tedious,
    but again an 
    elementary exercise, using Lemma \ref{lem:Lip_on_prof} and Lemma
    \ref{lem:metric_est_in_cone}. Their properness follows from the fact that
    the norm of the values tends to infinity as $h\to \infty$.

    Let us give some of the details of the proof of the Lipschitz property of
    $G$, the most tedious to write down. Consider $h(x,1)\in c_1(X)$ and
    $t<s\le h-\sqrt{h}$. Then
    \begin{equation*}
      \begin{split}
        |G(hx,h,t)-G(hx,h,s)| \le &\left|
           \frac{\sqrt{h}}{\frac{t}{\sqrt{h}}+1} -
           \frac{\sqrt{h}}{\frac{s}{\sqrt{h}}+1} \right|
         f\left((\frac{t}{\sqrt{h}}+1) (x,1)\right)\\
         &+
         \frac{\sqrt{h}}{\frac{s}{\sqrt{h}}+1} \left|
           f\left((\frac{t}{\sqrt{h}}+1) (x,1)\right)  -
           f\left((\frac{s}{\sqrt{h}}+1) (x,1)\right)\right| \\
         \le & {h}\left|\frac{1}{t+\sqrt{h}} -\frac{1}{s+\sqrt{h}}\right|
         \left( L (\frac{t}{\sqrt{h}}+1) L\right)\\
         &+ \frac{h}{s+\sqrt{h}} L |t-s| \frac{L}{\sqrt{h}}\\
         \le & h \frac{1}{(t+\sqrt{h})^2} |t-s|
         \left(\frac{L}{\sqrt{h}}(t+\sqrt{h})L\right)\\
         &+ \frac{\sqrt{h}}{s+\sqrt{h}}L^2 |t-s|\\
         \le & 2L^2 |t-s| = 2L^2 |(hx,h,t)-(hx,h,s)|
       \end{split}
     \end{equation*}
  The first inequality is just the triangle inequality. For the second, we use
  the Lipschitz property of $f$ (with Lipschitz constant $L$) which implies in
  particular also that $|f(x)|\le L|x|$ for all $|x|\ge 1$ by
  comparing to $f(0)$ and making $L$ bigger depending on $f(0)$, if
  necessary. We also use that by the compactness of $X$ we can 
  choose $L$ such that $|(x,1)|\le L$ for all $x\in X$.

  For the third inequality we use that the derivative $y\mapsto
  -(y+\sqrt{h})^{-2}$ of $y\mapsto
  (y+\sqrt{h})^{-1}$ is monotonically increasing in absolute value and use the
  mean value theorem.

  For the last inequality, we just use that $s,t\ge 0$.

  By Lemma \ref{lem:Lip_on_prof}, the Lipschitz property of $G$ follows now if
  we establish a similar uniform inequality for $G(hx,h,t)-G(ry,r,t)$ for
  $x,y\in X$, $1\le h<r$, and $t\le h-\sqrt{h}$ which can be obtain by similar
  elementary computations, using also Lemma
  \ref{lem:metric_est_in_cone}. Details are left to the reader. 
\end{proof}

We used the following Lipschitz criterion for coarse homotopies.
\begin{lemma}\label{lem:Lip_on_prof}
  Let $X,Y$ be metric space, $p_o\colon X\to [0,\infty); x\mapsto d(x,x_0)$ a
  basepoint projection for $x_0\in X$. Let $H\colon I_{p_0}X\to Y$ be a map.

  If there is $C>0$ such that for each $t_0\in [0,\infty)$ and each $z\in X$
  the restrictions to the $t_0$-time slice $S_{t_o}:= X\times \{t_0\}\cap
  I_{p_0}X$ and the $z$-slice 
  \begin{equation*}
    H_{S_{t_0}}\colon S_{t_0}\to Y;\qquad H_{\{z\}\times [0,\infty)\cap
      I_{p_o}X}\colon \{z\}\times[0,\infty)\cap I_{p_0}X \to Y
  \end{equation*}
   are $C$-Lipschitz, then $H$ is globally $2C$-Lipschitz.
\end{lemma}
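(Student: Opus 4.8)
The plan is to join any two points of $I_{p_0}X$ by a two-segment path — one horizontal, one vertical — on which $H$ is controlled by the two slice hypotheses, the only care being that the corner point of the path stays inside the cylinder. Fix $(x,s),(y,t)\in I_{p_0}X$; since the inequality to be proved is symmetric in these two points, I may assume $p_0(x)\le p_0(y)$. Then $s\le p_0(x)+1\le p_0(y)+1$, so $(y,s)$ also lies in $I_{p_0}X$. This is the one place where the shape of the cylinder is used: raising the base-point value keeps a point inside the sublevel region $\{\,t\le p_0(\cdot)+1\,\}$ (equally, had $p_0(y)\le p_0(x)$ one would slide vertically at $x$ first and then horizontally at height $t$).

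First I would move horizontally at height $s$. Both $(x,s)$ and $(y,s)$ belong to the time slice $S_s=X\times\{s\}\cap I_{p_0}X$, and in every standard product metric the distance between them equals $d_X(x,y)$, so the $C$-Lipschitz hypothesis on $H_{S_s}$ gives $d(H(x,s),H(y,s))\le C\,d_X(x,y)$. Then I would move vertically at $y$: since $s\le p_0(y)+1$ and $t\le p_0(y)+1$, both $(y,s)$ and $(y,t)$ lie in the $y$-slice $\{y\}\times[0,\infty)\cap I_{p_0}X=\{y\}\times[0,p_0(y)+1]$, on which the hypothesis gives $d(H(y,s),H(y,t))\le C\,|s-t|$. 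The triangle inequality then yields
\[
 d\bigl(H(x,s),H(y,t)\bigr)\le C\bigl(d_X(x,y)+|s-t|\bigr).
\]

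Finally, whichever product metric is in force on $X\times\reals$ (the Euclidean one inherited from an embedding, the sup metric, or the $\ell^1$ metric), one has $d_X(x,y)+|s-t|\le 2\,d\bigl((x,s),(y,t)\bigr)$, so $H$ is $2C$-Lipschitz as claimed (with the $\ell^1$ product metric the constant is already $C$). Every step here is immediate except the verification that the corner point $(y,s)$ does not leave $I_{p_0}X$; that is the main — and only — obstacle, and the reduction to the case $p_0(x)\le p_0(y)$ disposes of it.
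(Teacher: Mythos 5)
Your proposal is correct and follows essentially the same argument as the paper: interpolate through the corner point whose $X$-coordinate is the point with larger basepoint value (the paper uses $(x,s)$ with $p_0(x)\ge p_0(z)$, you use $(y,s)$ with $p_0(y)\ge p_0(x)$, which is the same idea), then apply the two slice hypotheses and the triangle inequality to get the constant $2C$. Your extra remark on how $d_X(x,y)+|s-t|$ compares to the product metric only makes explicit what the paper leaves implicit.
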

\begin{proof}
  This uses the fact that there are enough points in $I_{p_0}X$ to interpolate; specifically, let
  $(x,t)$ and $(z,s)\in I_{p_0}X$ with $p_0(x)\ge p_0(z)$. By definition of
  $I_{p_0}X$ then $s\le p_0(z)\le p_0(x)$ and therefore also $(x,s)\in
  I_{p_0}X$. Consequently, by the triangle inequality,
  \begin{equation*}
    \begin{split}
      d(f(x,t),f(y,s)) &\le d(f(x,t),f(x,s))+d(f(x,s),f(y,s)) \\
      & \le C d((x,t),(x,s)) + Cd((x,s),(y,s)) = C\abs{t-s} + C d(x,y)\\
      &\le 2C d((x,t),(y,s)).
    \end{split}
  \end{equation*}
\end{proof}

\begin{lemma}\label{lem:metric_est_in_cone}
  Assume that $X\subset\reals^N$ is bounded, i.e.~there is $C>0$ such that
  $\abs{x}\le C$ for all $x\in X$. For $(hx,h),(ry,r)\in c(X)\subset
  \reals^N\times [0,\infty)$ and $t\le \min\{h,r\}$ we then have
\[ d((tx,t),(ty,t)) \le (1+(C+1)) d((hx,y),(ry,r)). \]
\end{lemma}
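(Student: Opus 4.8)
The plan is a direct application of the triangle inequality after projecting onto the two obvious factors of $\reals^N\times[0,\infty)$. First I would fix notation by writing $R:=d((hx,h),(ry,r))$ and observing that, since orthogonal projection onto a coordinate subspace is $1$-Lipschitz, we simultaneously get $\abs{hx-ry}\le R$ (projection to $\reals^N$) and $\abs{h-r}\le R$ (projection to the last coordinate).

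Next, I would note that both sides of the claimed inequality are symmetric under interchanging the roles of $(hx,h)$ and $(ry,r)$, so without loss of generality $h\le r$, and then the hypothesis $t\le\min\{h,r\}$ gives $t\le h$. Since the two points $(tx,t)$ and $(ty,t)$ sit at the same height $t$, the left-hand side is simply $d((tx,t),(ty,t))=t\abs{x-y}$, and $t\le h$ gives $t\abs{x-y}\le h\abs{x-y}=\abs{hx-hy}$.

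Finally I would interpolate through the point $ry$:
\[ \abs{hx-hy}\le \abs{hx-ry}+\abs{ry-hy}=\abs{hx-ry}+\abs{r-h}\,\abs{y}\le R + R\cdot C, \]
using $\abs{y}\le C$ from the boundedness of $X$ together with the two projection estimates above. Hence $d((tx,t),(ty,t))\le (1+C)R\le (1+(C+1))\,d((hx,h),(ry,r))$, which is the assertion (in fact with a little room to spare).

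There is essentially no genuine obstacle here: the only thing to keep straight is that the height coordinate has already been "used up" — both target points lie at the common height $t$ — so the entire estimate reduces to controlling $\abs{x-y}$ against the ambient distance $R$, and it is precisely the factor $\abs{r-h}\le R$ that lets the diameter bound $C$ enter the constant.
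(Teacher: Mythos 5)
Your proof is correct and follows essentially the same route as the paper: reduce to the comparison $d((tx,t),(ty,t))=t\abs{x-y}\le h\abs{x-y}$ and then interpolate through the point $ry$ (the paper does the same triangle inequality through $(ry,r)$ in the product $\reals^N\times[0,\infty)$), using $\abs{h-r}\le d((hx,h),(ry,r))$ and the bound $\abs{y}\le C$. Projecting to the factors first even yields the slightly sharper constant $1+C$ in place of the paper's $1+(C+1)$, which comes from estimating $\abs{(y,1)}\le C+1$ instead of $\abs{y}\le C$.
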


\begin{proof}
Observe:
  \begin{equation*}
    \begin{split}
      d((tx,t),(ty,t)) &= td(x,y)\le d((hx,h),(hy,h))\\
      &
      \le d((hx,h),(ry,r)) + d((ry,r),(hy,h)) \\
      &= d((hx,h),(ry,r)) + \abs{r-h} \cdot \abs{(y,t)}\\
        &\le (1+(C+1)) d((hx,y),(ry,r)).
    \end{split}
  \end{equation*}
\end{proof}

Proposition \ref{step3} is not quite good enough for our purposes because the
homotopy constructed there would, for example, not preserve an
$\reals_+$-basepoint. However, we have enough control such that we can perform a
``padding'' construction in our specific situation, where the domain is
$c([0,1]^n)$ and where the map is radial on one of the faces.

\begin{lemma}\label{lem:pad_basepoint}
  Let $f\colon (c([0,1]^n),c(\partial [0,1]^n))\to (c(Y),c(Y_0))$ be a coarse
  map of coarse pairs. Set $D:=[0,1]^{n-1}\times 
  \{1\}$  and assume that $f|_{c(D)}=c(u)$ is radial for a PL-map $u$.

Then we can find a coarse homotopy of pairs from $f$ to a radial map such that
the 
restriction to $c(D)$ is equal to $f|_{c(D)}$ throughout the coarse homotopy.
\end{lemma}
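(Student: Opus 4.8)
The plan is to first use Proposition~\ref{step3} to replace $f$ by a radial map, and then to ``straighten'' the resulting homotopy along $c(D)$ by a geometric homotopy‑extension argument. Concretely, I would begin by applying Proposition~\ref{step3} to $f\colon (c([0,1]^n),c(\partial[0,1]^n))\to(c(Y),c(Y_0))$ with distinguished subcomplex $X_1:=D$, on which $f|_{c(D)}=c(u)$ is already radial and Lipschitz (the latter by Proposition~\ref{prop:cone_map_Lipschitz}). After restricting to $c_L([0,1]^n)$ for a suitable $L$ — which, as in Proposition~\ref{step3}, I normalise to $L=1$ — this produces a radial proper Lipschitz map $g$ with $g|_{c(D)}=f|_{c(D)}$, together with a coarse homotopy of pairs $H$ from $f$ to $g$ whose restriction to $c(D)$ is the explicit path $(hx,h,t)\mapsto\rho(h,t)f(x,1)$ ``scaling down and then back up'', i.e.\ a ``go and return'' loop of radial maps based at $f|_{c(D)}$, all of whose values lie in $c(Y_0)$ (a cone, hence stable under the rescalings $\rho$).

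The only defect of $H$ is that it moves on $c(D)$, so the next step is to cancel this motion. A ``go and return'' path is canonically null‑homotopic rel endpoints by the standard ``do less of it'' homotopy; carrying this out with the explicit $\rho$'s gives a Lipschitz homotopy $\Lambda\colon c(D)\times[0,1]_s\times[0,1]_r\to c(Y_0)$ with $\Lambda(-,-,0)=H|_{c(D)}$, $\Lambda(-,-,1)\equiv f|_{c(D)}$, and $\Lambda$ constant in $r$ over $s\in\{0,1\}$. Next I would spread $\Lambda$ out over all of $c(\partial[0,1]^n)$: the inclusion $c(D)\hookrightarrow c(\partial[0,1]^n)$ is a cofibration which, because the triangulations of Definition~\ref{def:triangulate_cone} have bounded geometry (Lemma~\ref{lem:bounded_geom_of_cone_triang}), can be realised by Lipschitz retractions exactly as in the proof of Proposition~\ref{prop:cone_lipschitz_appr} (the double mapping cylinder and the map $R$ there). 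Using the associated homotopy extension property for the ``box'' subcomplex $(c(D)\times[0,1]_s)\cup(c(\partial[0,1]^n)\times\partial[0,1]_s)$ of $c(\partial[0,1]^n)\times[0,1]_s$ — with $\Lambda$ prescribed over $c(D)$ and constant homotopies at $f|_{c(\partial)}$, $g|_{c(\partial)}$ prescribed over the two $s$‑ends — I obtain a Lipschitz homotopy $K$ into $c(Y_0)$ with $K(-,-,0)=H|_{c(\partial[0,1]^n)}$; then $\widetilde H^{\partial}:=K(-,-,1)$ is a coarse homotopy of pairs from $f|_{c(\partial)}$ to $g|_{c(\partial)}$ that is constant, equal to $f|_{c(D)}$, on all of $c(D)$.

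Finally, I would apply the same geometric homotopy extension property once more, now for the cofibration $c(\partial[0,1]^n)\hookrightarrow c_1([0,1]^n)$ and the corresponding box subcomplex, to extend the $r$‑deformation (which is $K$ over $c(\partial[0,1]^n)$, constant at $f$ resp.\ $g$ over the two $s$‑ends, and $H$ at $r=0$) to a homotopy $\widehat K\colon c_1([0,1]^n)\times[0,1]_s\times[0,1]_r\to c(Y)$. Setting $\widetilde H:=\widehat K(-,-,1)$ and $\widetilde g:=g$ finishes the proof: $\widetilde H$ runs from $f$ to the radial map $g$; over $c(\partial[0,1]^n)$ it coincides with $\widetilde H^{\partial}$ and hence maps into $c(Y_0)$ at every time, so it is a homotopy of pairs; and over $c(D)$ it is constantly $f|_{c(D)}$, as required.

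The conceptual content of the argument is entirely formal; the work — and the one place an error could hide — is checking that each construction is realised by \emph{coarse}, indeed proper Lipschitz, maps on coarse cylinders of the form $I_p(\cdot)$. The Lipschitz bounds follow because $\Lambda$, $K$ and $\widehat K$ are assembled from the explicit rescalings $\rho$ and from finitely many scaled local retraction models (bounded geometry again, Lemma~\ref{lem:bounded_geom_of_cone_triang}, exactly as in the proof of Proposition~\ref{prop:cone_lipschitz_appr}); properness holds because all intermediate values remain within a bounded distance of values of the proper maps $f$ or $g$, whose norm tends to infinity as $h\to\infty$; and the domains stay within the world of cones over cubes, since $I_pc([0,1]^k)=c([0,1]^k\times[0,1])$ for the height projection $p$, with Lemma~\ref{lem:normalize_homotopy} used to renormalise the iterated cylinders. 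Performing these verifications — rather than any genuine difficulty — is what the ``padding construction'' amounts to.
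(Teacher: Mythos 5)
Your strategy is genuinely different from the paper's. The paper never applies Proposition~\ref{step3} to the whole cube relative to $c(D)$ and never invokes a homotopy-extension property. Instead it first squeezes $f$ into the lower half-cube $c([0,1]^{n-1}\times[0,1/2])$ (so that on the upper collar the map becomes the pullback of the radial $f|_{c(D)}$), applies Proposition~\ref{step3} only to that lower half, and then kills the residual go-and-return loop by an \emph{explicit} damping formula in $\rho$ across the collar, interpolating from the full loop at $x_n=1/2$ to the constant homotopy at $x_n=1$. That squeeze-and-damp construction is the ``padding'' of the lemma's name, and it is precisely why Proposition~\ref{step3} was stated with the explicit $\rho$-control on $c(X_1)$: you use the same control clause, but the paper consumes it through a formula rather than through cofibration machinery, and thereby avoids ever needing a coarse or Lipschitz homotopy extension theorem.

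The one real issue with your write-up is that the two Lipschitz homotopy-extension steps are asserted, not proved, and that is where the actual work lies: the map $R$ in the proof of Proposition~\ref{prop:cone_lipschitz_appr} is a special-purpose construction for one cone pair, not a general HEP, and nothing in the paper supplies one. Your sketch can be repaired, but only if every deformation parameter is scaled with the cone height: after normalizing with Lemma~\ref{lem:normalize_homotopy} and using $I_pc(Z)=c(Z\times[0,1])$, each extension should be realized by composing the prescribed data with the radial cone $c(r)$ of a PL retraction $r$ of a box $[0,1]^n\times[0,1]^2$ onto the relevant subcomplex; such a $c(r)$ is height-preserving and uniformly Lipschitz (Proposition~\ref{prop:cone_map_Lipschitz}), so the composites stay proper Lipschitz. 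If instead your null-homotopy parameter is a genuine unit interval, the standard retraction of $c(Z)\times[0,1]$ onto its box boundary has Lipschitz constant growing linearly with the height of the cross-section, and the uniform bound you need for the $r=1$ slice disappears. Finally, your properness justification is incorrect as stated: during the go-and-return loop and its damping the values sit at height roughly $\sqrt{h}$, which is not within bounded distance of the values of $f$ or $g$ (height roughly $h$); properness nevertheless holds because these heights still tend to infinity, which is exactly the argument used for Proposition~\ref{step3}.
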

\begin{proof}
  Let $i\colon [0,1]^{n-1}\times [0,1/2]\to [0,1]^n; (x_1,\dots,x_n)\to
  (x_1,\dots,x_{n-1},2x_n)$ and $p\colon [0,1]^{n-1}\times [1/2,1]\to D;
  (x_1,\dots,x_n)\mapsto (x_1,\dots,x_{n-1},1)$. Define a map
  $\bar f\colon c([0,1]^n)\to Y$ by $\bar f|_{c([0,1]^{n-1}\times [0,1/2])}:=
    f\circ c(i)$ and $\bar f|_{c([0,1]^{n-1}\times[1/2,1])}:= f\circ c(p)$,
    i.e.~we squeeze $f$ into the lower half of $c([0,1]^n)$ and then extend
    constantly in the $x_n$-coordinate.

    There is an obvious coarse homotopy between $f$ and $\bar f$ whose
    restriction to $c(D)$ is $f|_{c(D)}$ throughout the homotopy.

    Now we construct the required coarse homotopy from $\bar f$ to a radial
    map whose restriction to $c(D)$ remains constant. For this, we use the
    homotopy $H$ provided by Proposition \ref{step3} on $I_pc([0,1]^{n-1}\times
      [0,1/2])$. On the top part of the domain of this homotopy, where the
      initial map was radial, i.e.~of the 
      form $c(u)$ for a map $u\colon [0,1]^{n-1}\times \{1/2\}\to Y$, we know
      that 
      $H(h(x,t))=\rho(h,t)\cdot (u(x),1)$ with a real valued function $\rho$
      with 
      $\rho(h,t)=\rho(h,1-t)$. We then simply extend the homotopy to
      $I_pc([0,1]^{n-1}\times [1/2,1])$ by setting
      \begin{multline*}
        H(h(x_1,\dots,x_n),1,t):=
        \begin{cases}
          \rho(h,t(1-2x_n))( u(x_1,\dots,x_{n-1},1/2),1); & 0\le t\le 1/2\\
          \rho(h,(1-t)(1-2x_n))( u(x_1,\dots,x_{n-1},1/2),1); & 1/2\le t\le 1.
        \end{cases}
      \end{multline*}

      It is clear that this procedure does the job.
\end{proof}

We now formulate and prove the main result of this paper.

\begin{theorem}\label{theo:main}
Let $X$ be a finite simplical complex with subcomplex $X_0$ and base vertex
$x_0\in X_0$. Choose a
PL-embedding into $\reals^n$ and identify $X$ with its image and let
$i_0\colon [0,\infty)\to c(X); t\mapsto 
(tx_0,t)$ be associated to $x_0$.
Then the homomorphism $\Psi \colon \pi_n (X,X_0,x_0) \rightarrow \pi_n^{\coarse}
(c(X),c(X_0),i_0)$ of Definition \ref{def:Psi} is an isomorphism.
\end{theorem}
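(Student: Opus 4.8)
The plan is to prove that $\Psi$ is bijective; since it is already known to be a group homomorphism, this gives the theorem. The two halves are a surjectivity argument built from Proposition \ref{step3} and Lemma \ref{lem:pad_basepoint}, and an injectivity argument obtained by running the same machine one dimension higher on homotopies.

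\emph{Surjectivity.} Let a class of $\pi_n^{\coarse}(c(X),c(X_0),i_0)$ be represented by a coarse map of pairs $F\colon(c([0,1]^n),c(\boundary[0,1]^n))\to(c(X),c(X_0))$ with $F|_{c(\boundary_+[0,1]^n)}=i_0\circ p$. The point is that this last condition says exactly that $F$ is already \emph{radial} on $c(\boundary_+[0,1]^n)$, namely equal to $c(\mathrm{const}_{x_0})$. Apply Proposition \ref{step3} with $[0,1]^n,\ \boundary[0,1]^n,\ \boundary_+[0,1]^n$ in the roles of $X,X_0,X_1$ and $c(X),c(X_0)$ as target: after restricting to $c_L([0,1]^n)$ the map $F$ is coarsely homotopic, as a map of pairs, to a radial proper Lipschitz (indeed simplicial) map $g=c(u)$, where $u\colon([0,1]^n,\boundary[0,1]^n,\boundary_+[0,1]^n)\to(X,X_0,\{x_0\})$ is a PL map, hence represents a class in $\pi_n(X,X_0,x_0)$; moreover the radialization homotopy restricted to $c(\boundary_+[0,1]^n)$ takes values on the ray $i_0[\reals_+]$ (it has the form $(hx,h,t)\mapsto\rho(h,t)\,i_0(L)$). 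To turn this into a genuinely $\reals_+$-pointed relative coarse homotopy on the full cone, first undo the passage to $c_L$ by precomposing with the radial rescaling coarse equivalence $c([0,1]^n)\to c_L([0,1]^n)$, which is $\reals_+$-pointed and whose composite with the inclusion is close to the identity, so that this changes nothing by Example \ref{close}; then apply the padding construction of Lemma \ref{lem:pad_basepoint} (in the mild variant that keeps \emph{all} of $c(\boundary_+[0,1]^n)$ equal to $i_0\circ p$, not just one face) to absorb the movement along the ray. This yields $[F]=[c(u)]=\Psi([u])$.

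\emph{Injectivity.} Suppose $f_0,f_1\colon([0,1]^n,\boundary[0,1]^n)\to(X,x_0)$ are Lipschitz with $\Psi([f_0])=\Psi([f_1])$, i.e.\ $c(f_0)$ and $c(f_1)$ are connected by an $\reals_+$-pointed relative coarse homotopy. By Lemma \ref{lem:normalize_homotopy} this homotopy may be taken on $I_pc([0,1]^n)$ with $p$ the height projection, and $I_pc([0,1]^n)=c([0,1]^n\times[0,1])=c([0,1]^{n+1})$, so it is a coarse map $H\colon c([0,1]^{n+1})\to c(X)$. It is radial on the two ends $[0,1]^n\times\{0\}$ and $[0,1]^n\times\{1\}$ (there it is $c(f_0)$, $c(f_1)$) and on $\boundary_+[0,1]^n\times[0,1]$ (there it is $i_0\circ p$), and it maps $c(\boundary[0,1]^n\times[0,1])$ into $c(X_0)$. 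Run Proposition \ref{step3} once more, now with $[0,1]^{n+1}$ as domain complex, $\boundary[0,1]^n\times[0,1]$ as the pair subcomplex, and the above union of faces as $X_1$, followed by the padding of Lemma \ref{lem:pad_basepoint}: $H$ is coarsely homotopic rel those faces to a radial map $c(v)$. Then $v\colon[0,1]^{n+1}\to X$ is a homotopy from $f_0$ to $f_1$ sending $\boundary[0,1]^n\times[0,1]$ into $X_0$ and $\boundary_+[0,1]^n\times[0,1]$ to $x_0$, so $[f_0]=[f_1]$ in $\pi_n(X,X_0,x_0)$. Hence $\Psi$ is injective, and therefore an isomorphism.

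The main obstacle is purely one of bookkeeping: one must carry the three conditions — properness, the pair condition into $c(X_0)$, and the pointed condition $i_0\circ p$ on $c(\boundary_+[0,1]^n)$ — simultaneously through the radialization of Proposition \ref{step3}, whose output homotopy is only relative to a subcomplex on which the input was already radial and which, on $c(\boundary_+[0,1]^n)$, slides points along the ray $i_0[\reals_+]$ rather than fixing them. Reconciling this is exactly the job of Lemma \ref{lem:pad_basepoint} together with the (routine) extension of its squeeze-and-extend proof from a single face to the face $\boundary_+[0,1]^n$ and to the corresponding faces of $[0,1]^{n+1}$. Everything else — existence and $\reals_+$-pointedness of the radial rescaling coarse equivalence, the identification $I_pc([0,1]^n)=c([0,1]^{n+1})$, and the verification that the pieced-together homotopies are coarse maps — is straightforward given Propositions \ref{prop:coarse_glue} and \ref{prop:cone_map_Lipschitz} and Lemmas \ref{lem:normalize_homotopy} and \ref{lem:Lipschitz_approxi}.
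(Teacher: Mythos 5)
Your proof is correct and takes essentially the same route as the paper: radialization via Proposition \ref{step3} together with the basepoint padding of Lemma \ref{lem:pad_basepoint} to represent every class by a cone $c(u)$ of a PL map, and the same machinery run one dimension higher (normalizing the homotopy by Lemma \ref{lem:normalize_homotopy} and reading $I_pc([0,1]^n)$ as $c([0,1]^{n+1})$) for injectivity, which is exactly the paper's well-definedness argument for its inverse $\Phi$. The only cosmetic difference is that where you invoke a ``mild variant'' of Lemma \ref{lem:pad_basepoint} fixing all of $c(\partial_+[0,1]^n)$, the paper instead precomposes with a PL self-homeomorphism of $[0,1]^n$ carrying $\partial_+[0,1]^n$ onto the face $D$, so the lemma applies verbatim.
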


\begin{proof}

We want to construct an inverse $\Phi$ to $\Psi$. For this, let $f\colon
c([0,1]^n)\to c(X)$ be a coarse map representing an element $[f]\in
\pi_n^{\coarse}(c(X),c(X_0),i_0)$.

Observe that there is a PL-homeomorphism $[0,1]^n\to [0,1]^n$ mapping
$\partial_+[0,1]^n$ to the set $D$ of Lemma \ref{lem:pad_basepoint}. We can therefore
apply Lemma \ref{lem:pad_basepoint} and get a coarse homotopy which is
constant on $c(\partial_+([0,1]^n))$ (meaning it is an
$\reals_+$-pointed coarse homotopy) to a radial map
$c(u)$ for a PL-map $u\colon ([0,1]^n,\partial [0,1]^n,\partial_+[0,1]^n)\to
(X,X_0,x_0)$.

Of course, we want to set $\Phi([f]):= [u]$. It is then obvious that
$\Phi\circ \Psi=\id$ and $\Psi\circ\Phi=\id$.

But we have to show that the map $\Phi$ is really well defined.

For this, we could replace $f$ by $g$, coarsely homotopic through a coarse
homotopy $H_1$. Moreover, we have to chose a coarse homotopy $H_0$ from  a
radial map $c(u)$ to $f$ and $H_2$ from $g$ to a radial map $c(v)$. All
homotopies are $\reals_+$-pointed and map the boundary of $[0,1]^n$ to
$c(X_0)$. We can concatenate $H_0,H_1,H_2$ and reinterpret the domain of the
homotopy as $c([0,1]^{n+1})$. Being $\reals_+$-pointed, this concatenation is
radial when restricted to $c(\partial_+[0,l]^n\times [0,1])$.

Proposition \ref{step3} almost allows us to
replace $f$ by a coarsely equivalent radial map based on some $u\colon
[0,1]^n\to X$ such that $[f]=[c(u)]\in \pi_n^{\coarse}(c(X),c(X_0),i_0)$ such
that $[u]\in \pi_n(X,X_0,x_0)$ would be a candidate for $\Phi([f])$.

The problem is that the construction of Proposition \ref{step3} does not
preserve the coarse basepoint $i_0$. Fortunately, Proposition \ref{step3}
provides enough control on the part of the domain where the map is already
radial, in particular in our case on $c(\boundary_+[0,1]^n)$. Moreover, it is
radial on $c([0,1]^n\times\{0,1\})$ because the beginning and end of the
concatenated coarse homotopies are radial.

Now, by Lemma \ref{lem:pad_basepoint} there is a coarse homotopy to a radial map,
and that new map coincides with the old one where it is already radial. We can
reinterpret that map as (the cone of) a homotopy between $u$ and $v$ which is
pointed. This shows that indeed the map $\Phi$ is well defined.

\end{proof}

\bibliographystyle{plain}

\bibliography{data}

\begin{biblist}
\bib{Bart}{article}{
   author={Bartels, Arthur C.},
   title={Squeezing and higher algebraic $K$-theory},
   journal={$K$-Theory},
   volume={28},
   date={2003},
   number={1},
   pages={19--37},
   issn={0920-3036},
   review={\MR{1988817}},
   doi={10.1023/A:1024166521174},
}
 %  \bib{Bredon}{book}{
 %   author={Bredon, Glen E.},
 %   title={Topology and geometry},
 %   series={Graduate Texts in Mathematics},
 %   volume={139},
 %   note={Corrected third printing of the 1993 original},
 %   publisher={Springer-Verlag, New York},
 %   date={1997},
 %   pages={xiv+557},
 %   isbn={0-387-97926-3},
 %   review={\MR{1700700}},
 % }
\bib{Dodziuk}{article}{
   author={Dodziuk, Jozef},
   title={Finite-difference approach to the Hodge theory of harmonic forms},
   journal={Amer. J. Math.},
   volume={98},
   date={1976},
   number={1},
   pages={79--104},
   issn={0002-9327},
   review={\MR{0407872}},
   doi={10.2307/2373615},
}
\bib{Dran}{article}{
   author={Dranishnikov, A. N.},
   title={Asymptotic topology},
   language={Russian, with Russian summary},
   journal={Uspekhi Mat. Nauk},
   volume={55},
   date={2000},
   number={6(336)},
   pages={71--116},
   issn={0042-1316},
   translation={
      journal={Russian Math. Surveys},
      volume={55},
      date={2000},
      number={6},
      pages={1085--1129},
      issn={0036-0279},
   },
   review={\MR{1840358}},
   doi={10.1070/rm2000v055n06ABEH000334},
 }
 \bib{HPR}{article}{
   author={Higson, Nigel},
   author={Pedersen, Erik Kj\ae r},
   author={Roe, John},
   title={$C^\ast$-algebras and controlled topology},
   journal={$K$-Theory},
   volume={11},
   date={1997},
   number={3},
   pages={209--239},
   issn={0920-3036},
   review={\MR{1451755}},
   doi={10.1023/A:1007705726771},
}

\bib{Mitch4}{article}{
   author={Mitchener, Paul D.},
   title={Coarse homology theories},
   journal={Algebr. Geom. Topol.},
   volume={1},
   date={2001},
   pages={271--297},
   issn={1472-2747},
   review={\MR{1834777}},
   doi={10.2140/agt.2001.1.271},
}
\bib{Norouzizadeh}{thesis}{
author={Behnam Norouzizadeh},
title={Some Aspects of Coarse Homotopy Theory},
school={Universit\"at G\"ottingen},
year={2009},
type={PhD thesis},
}

\bib{Roe1}{book}{
   author={Roe, John},
   title={Index theory, coarse geometry, and topology of manifolds},
   series={CBMS Regional Conference Series in Mathematics},
   volume={90},
   publisher={Published for the Conference Board of the Mathematical
   Sciences, Washington, DC; by the American Mathematical Society,
   Providence, RI},
   date={1996},
   pages={x+100},
   isbn={0-8218-0413-8},
   review={\MR{1399087}},
   doi={10.1090/cbms/090},
 }
 \bib{Roe6}{book}{
   author={Roe, John},
   title={Lectures on coarse geometry},
   series={University Lecture Series},
   volume={31},
   publisher={American Mathematical Society, Providence, RI},
   date={2003},
   pages={viii+175},
   isbn={0-8218-3332-4},
   review={\MR{2007488}},
   doi={10.1090/ulect/031},
}
\bib{STY}{article}{
   author={Skandalis, G.},
   author={Tu, J. L.},
   author={Yu, G.},
   title={The coarse Baum-Connes conjecture and groupoids},
   journal={Topology},
   volume={41},
   date={2002},
   number={4},
   pages={807--834},
   issn={0040-9383},
   review={\MR{1905840}},
   doi={10.1016/S0040-9383(01)00004-0},
}
\bib{Spanier}{book}{
   author={Spanier, Edwin H.},
   title={Algebraic topology},
   publisher={McGraw-Hill Book Co., New York-Toronto, Ont.-London},
   date={1966},
   pages={xiv+528},
   review={\MR{0210112}},
}
	
\bib{Whitney}{book}{
   author={Whitney, Hassler},
   title={Geometric integration theory},
   publisher={Princeton University Press, Princeton, N. J.},
   date={1957},
   pages={xv+387},
   review={\MR{0087148}},
}	
 \bib{Zeeman}{article}{
   author={Zeeman, E. C.},
   title={Relative simplicial approximation},
   journal={Proc. Cambridge Philos. Soc.},
   volume={60},
   date={1964},
   pages={39--43},
   review={\MR{0158403}},
}
\end{biblist}

\end{document}